\pdfoutput=1
\documentclass[11pt]{article}

\usepackage[a4paper,margin=28mm]{geometry}

\usepackage[T1]{fontenc}
\usepackage[utf8]{inputenc}
\usepackage{lmodern}
\usepackage{amsmath,amssymb,amsthm,mathtools}

\AtBeginDocument{%
  \setlength{\abovedisplayskip}{6pt plus 2pt minus 3pt}%
  \setlength{\belowdisplayskip}{6pt plus 2pt minus 3pt}%
  \setlength{\abovedisplayshortskip}{3pt plus 2pt minus 2pt}%
  \setlength{\belowdisplayshortskip}{5pt plus 2pt minus 3pt}%
  \setlength{\jot}{3pt}%
}
\numberwithin{equation}{section}
\usepackage{bm}
\usepackage{mathrsfs}
\usepackage{comment}

\usepackage{graphicx}
\usepackage{tikz}
\usetikzlibrary{arrows.meta,calc}
\usepackage{booktabs}
\usepackage{array}

\usepackage[colorlinks=true,linkcolor=blue,citecolor=blue,urlcolor=blue]{hyperref}
\hypersetup{
  pdftitle={Anisotropic Lagrange interpolation on exact spherical triangles},
  pdfauthor={Hiroki Ishizaka},
  pdfsubject={Anisotropic finite element interpolation on exact spherical triangles},
  pdfkeywords={anisotropic interpolation, exact spherical triangle, semi-regular mesh, maximum-angle condition, radial projection}
}
\usepackage[nameinlink,capitalize]{cleveref}

\theoremstyle{plain}
\newtheorem{theorem}{Theorem}[section]
\newtheorem{lemma}[theorem]{Lemma}
\newtheorem{proposition}[theorem]{Proposition}
\newtheorem{corollary}[theorem]{Corollary}

\theoremstyle{definition}
\newtheorem{definition}[theorem]{Definition}

\theoremstyle{remark}
\newtheorem{remark}[theorem]{Remark}

\newcommand{\R}{\mathbb{R}}

\newcommand{\diff}{\mathop{}\!\mathrm{d}}

\newcommand{\Sph}{\mathbb{S}}

\newcommand{\aff}{\operatorname{aff}}
\newcommand{\dist}{\operatorname{dist}}
\newcommand{\conv}{\operatorname{conv}}

\theoremstyle{definition}
\newtheorem{convention}[theorem]{Convention}

\title{Anisotropic Lagrange interpolation on exact spherical triangles}

\author{
  Hiroki Ishizaka\\
  Team FEM, Matsuyama, Japan\\
  E-mail: \texttt{h.ishizaka005@gmail.com}
}

\date{}

\begin{document}

\maketitle

\begin{abstract}
We study linear Lagrange interpolation on exact geodesic spherical triangles obtained by radial projection of chordal triangles. Factorising the element map into an affine map and a radial correction separates chordal anisotropy from radial distortion. We derive exact identities for the surface measure, Dirichlet energy, and directional derivatives, and use them to transfer planar anisotropic interpolation estimates to the sphere. The resulting local error estimates retain the two chordal directional length scales and keep the geometric factors explicit, with constants independent of the element size, shape, and sphere radius. In the gradient estimate, the maximum-angle factor multiplies only the longer directional scale. Four explicit triangle families quantify the loss caused by replacing these scales with the diameter and establish the sharp maximum-angle dependence. They also show that, within the same two-term directional bound with a constant independent of the element, the maximum-angle factor cannot be moved to the shorter scale or have its exponent reduced below one. The fourth family proves the optimality of the square-root radial-transfer factor while retaining the same transported directional scale. The local estimates extend to compatible spherical triangulations. As an application, they yield an anisotropic energy-error estimate for the conforming approximation of the mean-zero Laplace--Beltrami problem with exact integration.
\end{abstract}

\medskip
\noindent\textbf{Keywords.}\\
Anisotropic interpolation; exact spherical triangle; semi-regular mesh;
maximum-angle condition; radial projection.

\medskip
\noindent\textbf{2020 Mathematics Subject Classification.}
Primary 65N30; Secondary 65D05, 41A05.

\section{Introduction}
\label{sec:introduction}
Interpolation error analysis is an essential theme in finite element analysis. It is well known that the interpolation error of piecewise polynomials depends not only on the size of the elements but also on their geometry. For linear interpolation on triangles, the maximum-angle condition goes back to Synge \cite{Synge1957}, and it was developed in the context of finite element analysis by Babu\v{s}ka and Aziz \cite{BabuskaAziz1976} and Jamet \cite{Jamet1976}. This condition only requires that no interior angle approaches $\pi$, and it is weaker than the minimum-angle condition or the shape-regularity condition. Following K\v{r}\'{\i}\v{z}ek \cite{Krizek1991}, we call such families of meshes semi-regular. A semi-regular family of meshes may contain elements with arbitrarily large aspect ratios. For such meshes, estimates that take the directional length scales of each element into account give more precise information than estimates written only in terms of the element diameter.

Anisotropic interpolation error estimates for affine Lagrange elements have been well studied; see, for example, \cite{Apel1999,ApelDobrowolski1992,IshizakaKobayashiTsuchiya2021,IshizakaKobayashiTsuchiya2023}. For curved elements, however, the situation is more complicated, because the parametrisation of the element changes the surface measure, the $H^1$ seminorm, and higher-order derivatives. In the classical theory of curved elements, these effects are controlled by regularity assumptions on the element maps \cite{Ciarlet1978,CiarletRaviart1972,Bernardi1989}. In surface finite element analysis, functions on the discrete and exact surfaces are usually related by lifts or closest-point maps \cite{Dziuk1988,Demlow2009,DziukElliott2013}. Finite element methods on spherical geodesic grids have a long history \cite{Giraldo1997}. In particular, Di et al. \cite{DiLiTangZhang2006} constructed continuous linear Lagrange elements by radially lifting the basis functions of the chordal triangles. Apel and Pester \cite{ApelPester2005} derived weighted error estimates for a Cl\'ement-type interpolant on spherical domains, using isotropic triangulations in spherical coordinates. Anisotropic estimates under maximum-angle-type conditions have also been studied for piecewise planar surface meshes induced by level-set functions \cite{OlshanskiiReuskenXu2012,Demlow2023Mixed}.

For the Poisson problem on two-dimensional curved domains, Ishizaka \cite{Ishizaka2026ExactCurved} developed an exact-curved Lagrange framework in which the element map is factorised into an affine map and a curved correction. This factorisation separates affine anisotropy from curvature effects and provides a framework for transferring anisotropic interpolation estimates from affine triangles to exact curved elements.

In this paper, we specialise this factorisation to the radially mapped linear Lagrange element on exact geodesic spherical triangles. The geometric comparison results used below, including the relation between the chordal and intrinsic maximum-angle parameters and the support-plane identity proved here, are also established in the companion preprint \emph{Exact characterisation of maximum-angle conditions for spherical finite element meshes} \cite{Ishizaka2026SphericalMaxAngle}. The present paper develops the interpolation analysis through exact radial transfer identities, asymmetric directional scales, and explicit sharpness families.

Let us briefly describe the setting of this paper. Let $T\subset\R^3$ be a non-degenerate chordal triangle whose vertices lie on the sphere $\Sph_R^2$, and assume $d_T:=\dist(0,\aff(T))>0$. We define the radial projection $\Psi_T$ by $\Psi_T(y):=Ry/|y|$ and set $K:=\Psi_T(T)$. Let $I_K$ be the nodal interpolation operator on $K$. We use the canonical labelling introduced in Section~\ref{sec:geometry}. In this labelling, $h_{T,1}$ and $h_{T,2}$ denote the lengths of the two edges of $T$ that share the vertex opposite the longest edge, ordered so that $h_{T,2}\leq h_{T,1}$. We set $\mu_T:=2/\sin\beta_T$, where $\beta_T$ is the largest interior angle of $T$, and $\chi_T:=R/d_T$. See Section~\ref{sec:geometry} for details.

We first derive exact transfer identities for the surface measure and the $H^1$ seminorm under the radial projection. The transfer factors for the $L^2$ norm are $\chi_T^{-1/2}$ and $\chi_T$, and those for the $H^1$ seminorm are $\chi_T^{-1/2}$ and $\chi_T^{1/2}$. We also derive identities for the first- and second-order directional derivatives. These identities transport each chordal direction to a pointwise tangent direction on $K$ and separate the transformed covariant Hessian from an additional correction involving first derivatives.

Combining these identities with affine anisotropic interpolation estimates gives local $L^2$- and $H^1$-error bounds on the exact spherical triangle. In particular, for $v\in H^2(K)$ and $w:=v\circ\Psi_T$,
\begin{align*}
|v-I_Kv|_{H^1(K)}
\leq C\chi_T^{1/2}
\left(
\mu_T h_{T,1}|\partial_{\ell_{T,1}}w|_{H^1(T)}
+2h_{T,2}|\partial_{\ell_{T,2}}w|_{H^1(T)}
\right),
\end{align*}
where $\ell_{T,1}$ and $\ell_{T,2}$ are the two affine directions and $C$ is independent of the element size, shape, and sphere radius. Keeping the derivative components separate during affine scaling confines the maximum-angle factor $\mu_T$ to the first directional coefficient. Section~\ref{sec:local-interpolation} also rewrites the directional data in terms of covariant derivatives on $K$.

To examine the role of the geometric factors in the estimates, we consider four explicit families of spherical triangles. The first family shows that the anisotropic order is lost if the directional lengths $h_{T,1}$ and $h_{T,2}$ are replaced by the diameter of the element. The second family gives the sharp order of $\mu_T$ and rules out replacing it by $\mu_T^{\theta}$ with $\theta<1$ in the same two-term directional bound. The third family shows that, within the same two-term directional bound with a constant independent of the element, the maximum-angle factor cannot be moved from the longer directional scale to the shorter one. The fourth family proves that the upper $H^1$ radial-transfer exponent $1/2$ is optimal for the same transported directional scale.

Because the radial projections induce matching parametrisations of the edges, the local construction can be extended to compatible spherical triangulations. Under the intrinsic spherical maximum-angle condition and relative refinement $h/R\to0$, where $h$ is the maximum chordal diameter, we obtain the standard global interpolation rates in the $L^2$ norm and the $H^1$ seminorm for $v\in H^2(\Sph_R^2)$ from the local estimates. Furthermore, using the same directional estimates, we present an anisotropic error estimate in the energy norm for the conforming finite element approximation of the Laplace--Beltrami problem with the mean-zero condition, assuming exact integration.

The remainder of this paper is organised as follows. In Section~\ref{sec:geometry}, we introduce the geometric setting and derive the exact radial transfer identities. In Section~\ref{sec:local-interpolation}, we prove the local interpolation error estimates. In Section~\ref{sec:sharpness}, we analyse the four families of spherical triangles and discuss the sharpness of the estimates. In Section~\ref{sec:global}, we present the global interpolation error estimates and the error estimate for the finite element approximation of the Laplace--Beltrami problem. Finally, we conclude the paper with some remarks in the last section.

\section{Geometric setting and element maps}
\label{sec:geometry}
We construct the exact spherical element from the reference triangle and separate affine anisotropy from radial distortion. We first factor the affine core, then establish the affine and radial transfer identities, and finally relate the chordal and spherical maximum-angle parameters to radial locality. These ingredients are used separately in Section~\ref{sec:local-interpolation}.

\subsection{Reference triangle and construction of the spherical element map}
\label{subsec:element-map}
Recall that the sphere of radius $R>0$ is
\begin{align*}
  \Sph_R^2=\{x\in\R^3:|x|=R\}.
\end{align*}
We consider the reference triangle
\begin{align*}
  \widehat T&:=\conv\{\hat p_1,\hat p_2,\hat p_3\},
  \quad
  \hat p_1:=(0,0)^{\top},
  \quad
  \hat p_2:=(1,0)^{\top},
  \quad
  \hat p_3:=(0,1)^{\top}.
\end{align*}
The distinct, non-collinear vertices $p_1,p_2,p_3\in\Sph_R^2$ determine the chordal triangle
\begin{align*}
  T=\conv\{p_1,p_2,p_3\}\subset\R^3.
\end{align*}
Here, $T$ is the chordal triangle and $\aff(T)$ denotes its affine hull. We assume $0\notin\aff(T)$, equivalently $\det[p_1\ p_2\ p_3]\neq0$; geometrically, the three vertices do not lie on a common great circle. Let $\nu_T$ be the unit normal oriented so that
\begin{align}
  \aff(T)=\{y\in\R^3:\nu_T\cdot y=d_T\},
  \quad
  d_T=\dist(0,\aff(T))>0.
  \label{eq:support-plane-normalisation}
\end{align}
Because $|\nu_T|=1$, the point
\begin{align*}
  c_T:=d_T\nu_T
\end{align*}
satisfies $\nu_T\cdot c_T=d_T$ and is the orthogonal projection of the origin onto $\aff(T)$.  The vector space parallel to $\aff(T)$ is
\begin{align*}
  V_T:=\nu_T^\perp
  =\{\xi\in\R^3:\nu_T\cdot\xi=0\}.
\end{align*}
Indeed, for any $y\in\R^3$,
\begin{align*}
  y\in\aff(T)
  \quad\Longleftrightarrow\quad
  \nu_T\cdot(y-c_T)=0
  \quad\Longleftrightarrow\quad
  y-c_T\in V_T.
\end{align*}
Therefore,
\begin{align*}
  \aff(T)=c_T+V_T,
  \quad
  T\subset c_T+V_T\subset\R^3,
  \quad
  \dim V_T=2.
\end{align*}
In particular, any $y\in T$ has the unique decomposition
\begin{align*}
  y=c_T+z,
  \quad z:=y-c_T\in V_T,
  \quad \nu_T\cdot y=d_T>0.
\end{align*}

\begin{convention}[Canonical labelling of the chordal affine core]
  \label{conv:chordal-labelling}
  Choose a longest edge and label its endpoints $p_2,p_3$ and the opposite vertex is $p_1$. With the notation of the Introduction,
  \begin{align*}
    \beta_T=\angle p_2p_1p_3,
    \quad
    h_T=|p_2-p_3|,
    \quad
    h_{T,1}=|p_1-p_2|,
    \quad
    h_{T,2}=|p_1-p_3|.
  \end{align*}
  Interchange $p_2,p_3$ if necessary so that $h_{T,2}\leq h_{T,1}$. If the longest edge is not unique, choose any one of them; if $h_{T,1}=h_{T,2}$, either ordering of $p_2,p_3$ may be used.
\end{convention}
Then, $\beta_T$ is a largest angle, $h_{T,2}\leq h_{T,1}\leq h_T<2h_{T,1}$, and this convention imposes no geometric restriction. It only fixes the vertex labelling used by the reference parametrisation; the sets $T$ and $K$ and the quantities $\nu_T$ and $d_T$ are unchanged. All subsequent estimates hold for every labelling satisfying this convention.

Recall that the affine element map from the reference triangle onto the chordal triangle is
\begin{align}
  \Phi_T:\widehat T&\longrightarrow T,
  \quad
  \Phi_T(\hat x)=p_1+B_T\hat x,
  \quad
  B_T:=\begin{pmatrix}p_2-p_1&p_3-p_1\end{pmatrix}.
  \label{eq:reference-affine-map}
\end{align}
The matrix $B_T$ has rank two, hence $\Phi_T$ maps $\widehat T$ bijectively onto $T$. Because $\nu_T\cdot y=d_T>0$ on $T$, the radial projection is well defined:
\begin{align}
  \Psi_T:T&\longrightarrow\Sph_R^2,
  \quad
  \Psi_T(y)=R\frac{y}{|y|}.
  \label{eq:radial-projection}
\end{align}
The exact spherical triangle and the complete element map are
\begin{align}
  K&=\Psi_T(T),
  \quad
  F_K=\Psi_T\circ\Phi_T:\widehat T\longrightarrow K.
  \label{eq:complete-element-map}
\end{align}
Explicitly,
\begin{align}
  F_K(\hat x)
  =R\frac{(1-\hat x_1-\hat x_2)p_1+\hat x_1p_2+\hat x_2p_3}
  {|(1-\hat x_1-\hat x_2)p_1+\hat x_1p_2+\hat x_2p_3|},
  \quad
  F_K(\hat p_i)=p_i.
  \label{eq:complete-element-map-explicit}
\end{align}

The following lemma also shows how the chordal edges are mapped onto great-circle arcs, a fact used later to assemble conforming spherical triangulations.

\begin{lemma}[Basic properties of the radial correction]
  \label{lem:radial-basic}
  The map $\Psi_T:T\to K$ is a homeomorphism with inverse
  \begin{align}
    G_T(x)
    :=\Psi_T^{-1}(x)
    =\frac{d_T}{\nu_T\cdot x}x,
    \quad x\in K.
    \label{eq:radial-inverse-map}
  \end{align}
  Furthermore, for each edge
  \begin{align*}
    e_{ij}:=\conv\{p_i,p_j\},
    \quad 1\leq i<j\leq3,
  \end{align*}
  the restriction $\Psi_T|_{e_{ij}}$ maps $e_{ij}$ bijectively onto the closed shorter great-circle arc joining $p_i$ and $p_j$. Consequently, $F_K$ maps each edge of $\widehat T$ bijectively onto the corresponding closed shorter great-circle arc of $K$.
\end{lemma}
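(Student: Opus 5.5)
The plan is to verify directly that $G_T$ is a two-sided inverse of $\Psi_T$, using the support-plane normalisation \eqref{eq:support-plane-normalisation}. Continuity of both maps then gives the homeomorphism. The edge statement follows by restricting to the plane spanned by two vertices.

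\emph{Inverse.} For $y\in T$ we have $\nu_T\cdot y=d_T>0$, so $x=\Psi_T(y)=Ry/|y|$ satisfies
\[
\nu_T\cdot x=\frac{R\,d_T}{|y|}>0 .
\]
Hence $G_T(x)=\frac{d_T}{\nu_T\cdot x}\,x=\frac{|y|}{R}\cdot\frac{Ry}{|y|}=y$, so $G_T\circ\Psi_T=\mathrm{id}_T$. Thus $\Psi_T$ is injective, and it is onto $K$ by definition. On $K=\Psi_T(T)$ we therefore get $G_T=\Psi_T^{-1}$, and the denominator $\nu_T\cdot x$ is positive there.

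\emph{Continuity.} $\Psi_T$ is continuous because $|y|\ge d_T>0$ on $T$. $G_T$ is continuous on $K$ because $\nu_T\cdot x>0$ on $K$. Alternatively, since $T$ is compact and $\Psi_T$ is a continuous bijection onto a Hausdorff space, it is a homeomorphism.

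\emph{Edges.} Fix $e_{ij}$ and parametrise it as $y(t)=(1-t)p_i+tp_j$, $t\in[0,1]$. The vertices $p_i,p_j$ are linearly independent: otherwise $p_j=\pm p_i$. The case $p_j=p_i$ is excluded because the vertices are distinct. The case $p_j=-p_i$ is excluded because then $0\in e_{ij}\subset\aff(T)$, contradicting $d_T>0$. So $\Pi:=\operatorname{span}\{p_i,p_j\}$ is a plane through the origin, and $\Pi\cap\Sph_R^2$ is the great circle through $p_i,p_j$. Since $y(t)\in\Pi\setminus\{0\}$, the image $\Psi_T(y(t))$ lies on this circle. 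Writing $\Psi_T(y(t))$ in the orthonormal frame of $\Pi$ built from $p_i$, its angle from $p_i$ is $\theta(t)=\arg\bigl((1-t)p_i+tp_j\bigr)$. This angle is continuous and strictly increasing from $0$ to $\angle(p_i,p_j)\in(0,\pi)$, because $y(t)$ moves along a segment not through the origin and $(1-t),t\ge0$ keep it inside the convex cone spanned by $p_i,p_j$. Monotonicity can be checked via the cross product: $y(t)\times y'(t)=p_i\times p_j\neq0$ has a fixed direction, so the angular velocity keeps one sign. By the intermediate value theorem, $\Psi_T|_{e_{ij}}$ is a bijection of $e_{ij}$ onto the arc of angle $\angle(p_i,p_j)<\pi$, which is the closed shorter arc.

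\emph{Conclusion for $F_K$.} $\Phi_T$ maps each edge of $\widehat T$ affinely and bijectively onto the corresponding $e_{ij}$, since $\Phi_T(\hat p_i)=p_i$. Composing with the edge statement above gives the claim for $F_K=\Psi_T\circ\Phi_T$.

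The only delicate point is showing that the image is the \emph{shorter} arc. I would handle it as above: the angle between $p_i$ and $p_j$ is strictly less than $\pi$ because antipodal vertices are excluded by $d_T>0$, and the cone argument keeps the image between $p_i$ and $p_j$ on the side subtending that angle.
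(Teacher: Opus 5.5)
Your proof is correct. The homeomorphism part matches the paper's, with one small difference: you get injectivity directly from the left-inverse identity $G_T\circ\Psi_T=\mathrm{id}_T$. The paper instead first proves injectivity separately: two preimages differ by a positive scalar $c$, and pairing with $\nu_T$ forces $c=1$. It then verifies the inverse formula. Your version is slightly more economical.

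For the edge statement the routes genuinely differ. The paper's argument is coordinate-free and uses $\nu_T$ throughout:
\begin{itemize}
\item From $\nu_T\cdot p_i=\nu_T\cdot p_j=d_T>0$, the condition $\nu_T\cdot x>0$ cuts out an open semicircle $\mathcal C_{ij}^{+}$ of the great circle.
\item Since $\nu_T\cdot\Psi_T(y)=Rd_T/|y|>0$, one has $\Psi_T(e_{ij})\subset\mathcal C_{ij}^{+}$.
\item A continuous injection of a segment into a set homeomorphic to an open interval, fixing the endpoints, has image exactly the closed sub-arc between them. That sub-arc has length $<\pi R$ simply because it lies in an open semicircle.
\end{itemize}
You instead work in an explicit angular coordinate on $\operatorname{span}\{p_i,p_j\}$. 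You get strict monotonicity from $y(t)\times y'(t)=p_i\times p_j$, which gives angular speed $|p_i\times p_j|/|y(t)|^2>0$. You then pin the range to $[0,\angle(p_i,p_j)]$ by the convex-cone containment, with the frame oriented so that $p_j$ has positive second coordinate.

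Your route is more computational. It yields a quantitative, strictly monotone parametrisation of the arc, and it uses the support plane only to exclude antipodal vertices. The paper's route avoids derivatives and coordinates, and gets the ``shorter arc'' conclusion for free. It also reuses the same positivity of $\nu_T\cdot x$ that underlies the hemisphere $U_T$, the smooth extension of $G_T$, and the shared-chord argument in Lemma~\ref{lem:compatible-traces}.
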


\begin{proof}
  Because
  \begin{align*}
    \nu_T\cdot y=d_T>0,
    \quad y\in T,
  \end{align*}
  one has $y\neq0$, and hence $\Psi_T$ is well defined and continuous on $T$. Suppose that $\Psi_T(y_1)=\Psi_T(y_2)$. Then,
  \begin{align*}
    \frac{y_1}{|y_1|}
    =
    \frac{y_2}{|y_2|},
  \end{align*}
  and therefore
  \begin{align*}
    y_1=c y_2,
    \quad
    c :=\frac{|y_1|}{|y_2|}>0.
  \end{align*}
  Taking the scalar product with $\nu_T$ gives $d_T=c d_T$. Because $d_T>0$, it follows that $c=1$, and hence $y_1=y_2$.  Thus, $\Psi_T$ is injective.  Its surjectivity follows from the definition $K=\Psi_T(T)$. Let $x=\Psi_T(y)=Ry/|y|$. Then,
  \begin{align*}
    \nu_T\cdot x
    =
    \frac{Rd_T}{|y|}
    >0,
  \end{align*}
  and consequently
  \begin{align*}
    \frac{d_T}{\nu_T\cdot x}x
    =
    \frac{d_T}{Rd_T/|y|}\frac{R}{|y|}y
    =y.
  \end{align*}
  This proves \eqref{eq:radial-inverse-map}. Because $\nu_T\cdot x>0$ on $K$, the inverse formula defines a continuous map on $K$. Therefore, $\Psi_T:T\to K$ is a homeomorphism.

  Fix $1\leq i<j\leq3$. The endpoints $p_i$ and $p_j$ cannot be antipodal. Indeed, otherwise $0$ would be their midpoint and hence $0\in e_{ij}\subset T$, contradicting $\nu_T\cdot y=d_T>0$ on $T$. We define the corresponding great circle and its positive semicircle as
  \begin{align*}
    \mathcal C_{ij}
    &:=\Sph_R^2\cap\operatorname{span}\{p_i,p_j\},
    \quad
    \mathcal C_{ij}^{+}
    :=\{x\in\mathcal C_{ij}:\nu_T\cdot x>0\}.
  \end{align*}
  Because $\nu_T\cdot p_i=\nu_T\cdot p_j=d_T>0$, the restriction of $x\mapsto\nu_T\cdot x$ to $\operatorname{span}\{p_i,p_j\}$ is non-zero. Therefore, $\mathcal C_{ij}^{+}$ is one of the two open semicircles determined by the equation $\nu_T\cdot x=0$ on $\mathcal C_{ij}$. For any $y\in e_{ij}$,
  \begin{align*}
    \Psi_T(y)\in\mathcal C_{ij},
    \quad
    \nu_T\cdot\Psi_T(y)
    =\frac{Rd_T}{|y|}>0.
  \end{align*}
  Therefore,
  \begin{align*}
    \Psi_T(e_{ij})\subset\mathcal C_{ij}^{+}.
  \end{align*}
  The restriction $\Psi_T|_{e_{ij}}$ is continuous and injective, and it fixes the endpoints because $\Psi_T(p_i)=p_i$ and $\Psi_T(p_j)=p_j$. Because $\mathcal C_{ij}^{+}$ is homeomorphic to an open interval, its image is exactly the closed arc in $\mathcal C_{ij}^{+}$ joining $p_i$ and $p_j$. This arc has length strictly less than $\pi R$ and is therefore the closed shorter great-circle arc. 
  
  Finally, $\Phi_T$ maps each edge of $\widehat T$ bijectively onto the corresponding edge of $T$. The assertion for $F_K$ follows from $F_K=\Psi_T\circ\Phi_T$.
\end{proof}

In fact, the radial map extends to a smooth diffeomorphism from $\aff(T)$ onto the open hemisphere $U_T:=\{x\in\Sph_R^2:\nu_T\cdot x>0\}$, with inverse $x\mapsto d_Tx/(\nu_T\cdot x)$. Therefore, the interior of $T$ relative to $\aff(T)$ is mapped onto the interior of $K$ relative to $\Sph_R^2$. The set $K$ is geodesically convex, since the radial normalisation of any positive linear combination of two points in $K$ is the image of a convex combination of their preimages in $T$.

The support-plane distance is computable from the vertices:
\begin{align}
  d_T
  =\frac{|\det(p_1,p_2,p_3)|}
  {|(p_2-p_1)\times(p_3-p_1)|}
  =\frac{|\det(p_1,p_2,p_3)|}{2|T|_2},
  \label{eq:support-plane-distance-formula}
\end{align}
where $|T|_2=\mathcal H^2(T)$ is the two-dimensional Hausdorff measure of the chordal triangle. 

\subsection{Affine factorisation and anisotropic coordinates}
\label{subsec:planar-factorisation}
With the canonical labelling fixed in Convention~\ref{conv:chordal-labelling}, the affine map $\Phi_T$ is factorised into a planar anisotropic map and an isometric placement in the chordal plane. We first choose orthonormal coordinates in the two-dimensional space $V_T$.

Because all three vertices belong to $\aff (T)=c_T+V_T$, their differences $p_2-p_1$ and $p_3-p_1$ belong to $V_T$. We set
\begin{align*}
  q_{T,1}:=\frac{p_2-p_1}{h_{T,1}}\in V_T,
\end{align*}
so that $|q_{T,1}|=1$. The component of $p_3-p_1$ orthogonal to $q_{T,1}$ is
\begin{align*}
  r_{T,2}:=(p_3-p_1) - \left((p_3-p_1)\cdot q_{T,1}\right)q_{T,1}.
\end{align*}
Then, $r_{T,2}\in V_T$ and $r_{T,2}\cdot q_{T,1}=0$. If $r_{T,2}=0$, then $p_3-p_1$ is parallel to $p_2-p_1$, contrary to the non-collinearity of the vertices. Therefore, $r_{T,2}\neq0$, and we may define
\begin{align}
  q_{T,2}:=\frac{r_{T,2}}{|r_{T,2}|},
  \quad
  Q_T:=\begin{pmatrix}q_{T,1}&q_{T,2}\end{pmatrix}
  \in\R^{3\times2}.
  \label{eq:chordal-orthonormal-basis}
\end{align}
Therefore, $q_{T,1},q_{T,2}$ form an orthonormal basis of $V_T$, and $Q_T$ is an isometric isomorphism from $\R^2$ onto $V_T$. In particular,
\begin{align}
  Q_T^{\top}Q_T=I_2,
  \quad
  |Q_Tz|=|z|
  \quad\text{for any }z\in\R^2.
  \label{eq:chordal-basis-orthonormality}
\end{align}
Furthermore,
\begin{align*}
  (p_3-p_1)\cdot q_{T,2}
  =r_{T,2}\cdot q_{T,2}
  =|r_{T,2}|>0.
\end{align*}
The sign of $q_{T,2}$ is therefore fixed by the position of $p_3$ and does not depend on the orientation chosen for $\nu_T$. We set
\begin{align}
  s_T:=\frac{(p_3-p_1)\cdot q_{T,1}}{h_{T,2}}=\cos\beta_T,
  \quad
  t_T:=\frac{(p_3-p_1)\cdot q_{T,2}}{h_{T,2}}
  =\frac{|r_{T,2}|}{h_{T,2}}=\sin\beta_T>0.
  \label{eq:planar-shear-parameters}
\end{align}
The orthogonal decomposition used above now reads
\begin{align}
  p_3-p_1
  =h_{T,2}\bigl(s_Tq_{T,1}+t_Tq_{T,2}\bigr).
  \label{eq:third-edge-orthonormal-decomposition}
\end{align}
Because $|p_3-p_1|=h_{T,2}$ and $q_{T,1},q_{T,2}$ are orthonormal, this identity also gives $s_T^2+t_T^2=1$.

We define
\begin{align}
  A_{\widetilde T}
  :=\begin{pmatrix}
    h_{T,1}&h_{T,2}s_T\\
    0&h_{T,2}t_T
  \end{pmatrix},
  \quad
  \Phi_{\widetilde T}(\hat x):=A_{\widetilde T}\hat x,
  \quad
  \widetilde T:=\Phi_{\widetilde T}(\widehat T),
  \quad
  \iota_T(z):=p_1+Q_Tz,
  \quad z\in\R^2.
  \label{eq:planar-anisotropic-matrix}
\end{align}
Because $\det A_{\widetilde T}=h_{T,1}h_{T,2}t_T>0$, $A_{\widetilde T}\in\R^{2\times2}$ is invertible. From \eqref{eq:third-edge-orthonormal-decomposition},
\begin{align*}
  Q_TA_{\widetilde T}
  &=
  \begin{pmatrix}
    h_{T,1}q_{T,1}
    &h_{T,2}(s_Tq_{T,1}+t_Tq_{T,2})
  \end{pmatrix}
  =
  \begin{pmatrix}p_2-p_1&p_3-p_1\end{pmatrix}
  =B_T.
\end{align*}
Therefore,
\begin{align}
  \Phi_T=\iota_T\circ\Phi_{\widetilde T},
  \quad
  B_T=Q_TA_{\widetilde T},
  \quad
  F_K=\Psi_T\circ\iota_T\circ\Phi_{\widetilde T}.
  \label{eq:affine-stage-factorisation}
\end{align}
Here, $A_{\widetilde T}\in\R^{2\times2}$ contains the planar scaling and shear, while $Q_T\in\R^{3\times2}$ has orthonormal columns. Thus, the complete element map has the four-level factorisation
\begin{align}
  \widehat T
  \xrightarrow{\ \Phi_{\widetilde T}\ }
  \widetilde T
  \xrightarrow{\ \iota_T\ }
  T
  \xrightarrow{\ \Psi_T\ }
  K.
  \label{eq:four-level-factorisation}
\end{align}
The first map carries the directional scales and angle dependence, $\iota_T$ is an isometric placement in the chordal plane, and $\Psi_T$ supplies the radial correction. Figure~\ref{fig:four-level-element-map} summarises this factorisation.

\begin{figure}[t]
  \centering
  \begin{tikzpicture}[
    x=1cm,y=1cm,
    >={Latex[length=2mm]},
    map/.style={->,line width=0.8pt},
    geom/.style={line width=0.65pt},
    strong/.style={line width=1pt},
    aux/.style={thin,dashed},
    ray/.style={thin,densely dotted},
    every node/.style={font=\scriptsize}
  ]
    % Reference triangle
    \coordinate (r1) at (0,0);
    \coordinate (r2) at (1.55,0);
    \coordinate (r3) at (0,1.55);
    \draw[strong] (r1)--(r2)--(r3)--cycle;
    \draw[geom] (0,0)--(0.18,0)--(0.18,0.18)--(0,0.18);
    \fill (r1) circle (1pt) (r2) circle (1pt) (r3) circle (1pt);
    \node[below left] at (r1) {$\hat p_1$};
    \node[below] at (r2) {$\hat p_2$};
    \node[left] at (r3) {$\hat p_3$};
    \node at (0.52,0.52) {$\widehat T$};

    % Planar anisotropic representative
    \coordinate (a1) at (3.55,0);
    \coordinate (a2) at (5.85,0);
    \coordinate (a3) at (4.15,1.55);
    \draw[strong] (a1)--(a2)--(a3)--cycle;
    \draw[aux] (a3)--(4.15,0);
    \fill (a1) circle (1pt) (a2) circle (1pt) (a3) circle (1pt);
    \node[below left] at (a1) {$0$};
    \node[below] at (a2) {$h_{T,1}e_1$};
    \node[above] at (a3) {$h_{T,2}(s_T,t_T)^{\top}$};
    \node at (4.45,0.58) {$\widetilde T$};

    % Chordal affine core
    \coordinate (c1) at (7.75,0.18);
    \coordinate (c2) at (9.92,0.48);
    \coordinate (c3) at (8.42,1.72);
    \draw[aux] (7.25,-0.02)--(10.35,0.36)--(9.78,1.98)--(6.98,1.52)--cycle;
    \draw[strong] (c1)--(c2)--(c3)--cycle;
    \fill (c1) circle (1pt) (c2) circle (1pt) (c3) circle (1pt);
    \node[below left] at (c1) {$p_1$};
    \node[right] at (c2) {$p_2$};
    \node[above] at (c3) {$p_3$};
    \node at (8.65,0.75) {$T$};
    \node at (9.32,1.72) {$c_T+V_T$};

    % Spherical element
    \coordinate (O) at (13.35,0.85);
    \draw[geom] (O) circle[radius=1.22];
    \draw[aux] ($(O)+(1.22,0)$)
      arc[start angle=0,end angle=180,x radius=1.22,y radius=0.31];
    \draw[aux] ($(O)+(-1.22,0)$)
      arc[start angle=180,end angle=360,x radius=1.22,y radius=0.31];
    \coordinate (s1) at (12.58,0.42);
    \coordinate (s2) at (14.12,0.57);
    \coordinate (s3) at (13.37,1.75);
    \draw[aux] (s1)--(s2)--(s3)--cycle;
    \draw[strong] (s1)..controls(13.05,0.25)and(13.62,0.27)..(s2);
    \draw[strong] (s2)..controls(14.10,1.10)and(13.78,1.57)..(s3);
    \draw[strong] (s3)..controls(12.96,1.53)and(12.62,0.96)..(s1);
    \draw[ray] (O)--(s1) (O)--(s2) (O)--(s3);
    \fill (s1) circle (1pt) (s2) circle (1pt) (s3) circle (1pt)
      (O) circle (0.8pt);
    \node[below] at (O) {$0$};
    \node at (13.35,1.08) {$K$};
    \node[below=3pt] at ($(O)+(0,-1.22)$) {$K\subset\Sph_R^2$};

    % Maps
    \draw[map] (1.92,0.82)--(3.18,0.82)
      node[midway,above] {$\Phi_{\widetilde T}$}
      node[midway,below] {anisotropic};
    \draw[map] (6.17,0.82)--(7.30,0.82)
      node[midway,above] {$\iota_T$}
      node[midway,below] {isometric};
    \draw[map] (10.38,0.82)--(11.78,0.82)
      node[midway,above] {$\Psi_T$}
      node[midway,below] {radial};
  \end{tikzpicture}
  \caption{Four-level factorisation of the spherical element map. The planar map carries the anisotropic scales, the affine isometry places the triangle in its chordal support plane, and the radial projection maps the chordal core onto the exact geodesic spherical triangle.}
  \label{fig:four-level-element-map}
\end{figure}

\subsection{Tangential differential operators on the sphere}
\label{subsec:spherical-calculus}
We next fix the spherical differential notation used below.

For $x\in\Sph_R^2$, let $n(x):=x/R$ be the outward unit normal. We identify the tangent space at $x$ with
\begin{align*}
  T_x\Sph_R^2
  :=\{\tau\in\R^3:\tau\cdot n(x)=0\}
  =n(x)^\perp.
\end{align*}
The orthogonal projection onto $T_x\Sph_R^2$ is
\begin{align*}
  P(x):=I-n(x)n(x)^\top,
\end{align*}
where $I$ is the $3\times3$ identity matrix. Throughout the paper, $D$ and $D^2$ denote the first and second Fr\'echet derivatives of Euclidean-valued maps on open subsets of Euclidean spaces.

Let $v:\Sph_R^2\to\R$ be smooth. We choose an open set $U\subset\R^3$ containing $\Sph_R^2$ and a smooth extension
\begin{align*}
  v^{\mathrm e}:U\longrightarrow\R,
  \quad
  v^{\mathrm e}|_{\Sph_R^2}=v.
\end{align*}
The tangential gradient is
\begin{align*}
  \nabla_{\Sph}v(x):=P(x)\nabla v^{\mathrm e}(x).
\end{align*}
We set $g:=\nabla_{\Sph}v:\Sph_R^2\to\R^3$ and choose a smooth extension
\begin{align*}
  g^{\mathrm e}:U\longrightarrow\R^3,
  \quad
  g^{\mathrm e}|_{\Sph_R^2}=g.
\end{align*}
The covariant Hessian is the symmetric bilinear form
\begin{align*}
  \nabla_{\Sph}^2v(x)[\tau,\eta]
  :=\tau\cdot P(x)Dg^{\mathrm e}(x)\eta,
  \quad \tau,\eta\in T_x\Sph_R^2.
\end{align*}
Both definitions are independent of the extensions: the difference between two extensions vanishes on the sphere, as do its tangential derivatives; see \cite{DziukElliott2013}.

We use the Frobenius norm
\begin{align*}
  |\nabla_{\Sph}^2v(x)|^2
  :=\sum_{i,j=1}^2
  \left|\nabla_{\Sph}^2v(x)[e_i,e_j]\right|^2,
\end{align*}
where $\{e_1,e_2\}$ is any orthonormal basis of $T_x\Sph_R^2$.

Let $\mathcal O\subset\Sph_R^2$ be a Lipschitz surface domain.
All $L^2(\mathcal O)$ norms use the surface measure $\diff\sigma$.
We define
\begin{align*}
  \|v\|_{H^1(\mathcal O)}^2
  &:=\|v\|_{L^2(\mathcal O)}^2
  +\|\nabla_{\Sph}v\|_{L^2(\mathcal O)}^2,
  \\
  \|v\|_{H^2(\mathcal O)}^2
  &:=\|v\|_{H^1(\mathcal O)}^2
  +\|\nabla_{\Sph}^2v\|_{L^2(\mathcal O)}^2,
\end{align*}
with derivatives understood in the weak sense. These norms are equivalent to the usual Sobolev norms in local coordinates. On chordal planes, $\diff s$ denotes two-dimensional Hausdorff measure.

The geodesic distance between $x,y\in\Sph_R^2$ is
\begin{align*}
  d_{\Sph}(x,y):=R\arccos\left(\frac{x\cdot y}{R^2}\right).
\end{align*}
For distinct, non-antipodal $x,y$, ``the arc joining $x$ and $y$'' means the unique shorter great-circle arc, whose length is $d_{\Sph}(x,y)$.

\subsection{Transfer identities for the affine isometry}
\label{subsec:affine-transfer}
We first fix the chordal differential notation and then give the exact transfer identities under the affine isometry.

Because $\widetilde T=\Phi_{\widetilde T}(\widehat T)$ and $T=\Phi_T(\widehat T)$, the factorisation
\begin{align*}
  \Phi_T=\iota_T\circ\Phi_{\widetilde T},
  \quad
  \iota_T(z)=p_1+Q_Tz,
  \quad
  B_T=Q_TA_{\widetilde T},
  \quad
  Q_T^{\top}Q_T=I_2,
\end{align*}
shows that $\iota_T:\widetilde T\to T$ is an affine isometry. Indeed, for any $z_1,z_2\in\R^2$,
\begin{align*}
  |\iota_T(z_1)-\iota_T(z_2)|
  =|Q_T(z_1-z_2)|
  =|z_1-z_2|.
\end{align*}
In particular, the surface Jacobian of $\iota_T$ is one. For $w\in H^1(T)$, let $\nabla_Tw\in L^2(T;V_T)$ denote its intrinsic weak gradient in the affine plane $\aff(T)$. The intrinsic weak differential $D_Tw(y)$ is the linear functional on $V_T$ given, for almost every $y\in T$, by
\begin{align*}
  D_Tw(y)[\xi]
  :=\partial_\xi w(y)
  =\nabla_Tw(y)\cdot\xi,
  \quad \xi\in V_T,
\end{align*}
where $\partial_\xi$ denotes weak differentiation in the constant in-plane direction $\xi$. Thus, these derivatives are taken within the flat chordal plane $\aff(T)$. We set
\begin{align*}
  \tilde w:=w\circ\iota_T\in H^1(\widetilde T).
\end{align*}
The change-of-variables formula for the isometry gives
\begin{align}
  \|\tilde w\|_{L^2(\widetilde T)}
  =\|w\|_{L^2(T)}.
  \label{eq:isometric-L2-transfer}
\end{align}
Furthermore, for almost every $z\in\widetilde T$ and any $\hat\xi\in\R^2$, the Sobolev chain rule gives
\begin{align}
  \nabla\tilde w(z)
  &=Q_T^{\top}\nabla_Tw(\iota_T(z)),
  \label{eq:isometric-gradient-pullback}
  \\
  |\nabla\tilde w(z)|
  &=|\nabla_Tw(\iota_T(z))|,
  \label{eq:isometric-gradient-norm}
  \\
  \partial_{\hat\xi}\tilde w(z)
  &=(Q_T\hat\xi)\cdot\nabla_Tw(\iota_T(z)).
  \label{eq:isometric-directional-transfer}
\end{align}
The matrix $Q_TQ_T^{\top}$ is the orthogonal projection onto $V_T$. Because the intrinsic gradient takes values in $V_T$, this proves \eqref{eq:isometric-gradient-norm}.

For $w\in H^2(T)$, we write $D_T^2w$ for its intrinsic weak Hessian, viewed almost everywhere as a symmetric bilinear form on $V_T\times V_T$. For almost every $y\in T$ and constant vectors $\xi,\eta\in V_T$,
\begin{align*}
  D_T^2w(y)[\xi,\eta]
  =\partial_\xi\partial_\eta w(y).
\end{align*}
We use the same symbol for the associated self-adjoint linear map $V_T\to V_T$, defined through the Euclidean inner product by
\begin{align*}
  \xi\cdot\bigl(D_T^2w(y)\eta\bigr)
  =D_T^2w(y)[\xi,\eta].
\end{align*}
Its pointwise norm is the Frobenius norm
\begin{align*}
  |D_T^2w(y)|^2
  :=\sum_{i,j=1}^2
  \left|D_T^2w(y)[e_i,e_j]\right|^2,
\end{align*}
where $\{e_1,e_2\}$ is any orthonormal basis of $V_T$; the corresponding $L^2(T)$ norm uses the planar measure $\diff s$.

For almost every $z\in\widetilde T$ and any $\hat\xi,\hat\eta\in\R^2$, one has
\begin{align}
  D^2\tilde w(z)[\hat\xi,\hat\eta]
  =D_T^2w(\iota_T(z))[Q_T\hat\xi,Q_T\hat\eta].
  \label{eq:isometric-hessian-transfer}
\end{align}
This identity follows first for smooth functions from the second-order chain rule and extends to $H^2(T)$ by density. The inverse of the full affine map is
\begin{align}
  \Phi_T^{-1}(y)
  =A_{\widetilde T}^{-1}Q_T^{\top}(y-p_1)
  =(B_T^{\top}B_T)^{-1}B_T^{\top}(y-p_1),
  \quad y\in T.
  \label{eq:affine-inverse-map}
\end{align}
The constant surface Jacobian of $\Phi_T$ is
\begin{align}
  J_{\Phi_T}
  =\sqrt{\det(B_T^{\top}B_T)}
  =\det A_{\widetilde T}
  =h_{T,1}h_{T,2}t_T
  =2|T|_2.
  \label{eq:affine-surface-jacobian}
\end{align}
Thus, for any $f\in L^1(T)$,
\begin{align}
  \int_Tf\,\diff s
  =\int_{\widehat T}(f\circ\Phi_T)J_{\Phi_T}\,\diff\hat x.
  \label{eq:affine-change-of-variables}
\end{align}
Finally, if $w\in H^1(T)$ and $\hat w:=w\circ\Phi_T$, we write $\widehat\nabla$ for the gradient with respect to the reference variable $\hat x$. Then,
\begin{align}
  \widehat\nabla\hat w
  &=B_T^{\top}(\nabla_Tw)\circ\Phi_T,
  \label{eq:affine-gradient-pullback}
  \\
  (\nabla_Tw)\circ\Phi_T
  &=B_T(B_T^{\top}B_T)^{-1}\widehat\nabla\hat w
   =Q_TA_{\widetilde T}^{-\top}\widehat\nabla\hat w.
  \label{eq:affine-gradient-pushforward}
\end{align}
These identities follow from $Q_T^{\top}Q_T=I_2$ and the Sobolev chain rule. Thus, $Q_T$ introduces no geometric distortion, and all non-trivial affine scaling is contained in $A_{\widetilde T}$.

\subsection{Differential properties of the radial projection}
We establish the distortion bounds needed for regularity and norm transfer; the sharper directional identities are derived in Section~\ref{sec:local-interpolation}.

To define the Euclidean Fr\'echet derivatives of the radial correction, we consider the ambient map
\begin{align*}
  \Psi:\R^3\setminus\{0\}\longrightarrow\Sph_R^2,
  \quad
  \Psi(y):=R\frac{y}{|y|},
\end{align*}
so that $\Psi_T=\Psi|_T$. Because $|y|\geq d_T>0$ for any $y\in T$, $\Psi$ is $\mathcal C^\infty$ on an open neighbourhood of $T$. For $y\in T$, we set
\begin{align*}
  x:=\Psi_T(y),
  \quad
  n_y:=\frac{y}{|y|}=\frac{x}{R},
  \quad
  P_y:=I-n_yn_y^{\top}=P(x).
\end{align*}
Thus, $P_y$ is the orthogonal projection of $\R^3$ onto $T_x\Sph_R^2$.  Direct differentiation gives, for $\xi,\eta\in\R^3$,
\begin{align}
  D\Psi(y)\xi
  &=\frac{R}{|y|}P_y\xi,
  \label{eq:radial-first-derivative}
  \\
  D^2\Psi(y)[\xi,\eta]
  &=-\frac{R}{|y|^3}
  \left((y\cdot\eta)\xi+(y\cdot\xi)\eta+(\xi\cdot\eta)y\right)
  +\frac{3R}{|y|^5}(y\cdot\xi)(y\cdot\eta)y.
  \label{eq:radial-second-derivative}
\end{align}
In particular,
\begin{align*}
  \operatorname{range}D\Psi(y)=T_x\Sph_R^2,
  \quad
  \ker D\Psi(y)=\operatorname{span}\{y\}.
\end{align*}
Furthermore,
\begin{align}
  d_T\leq|y|\leq R,
  \quad
  \chi_T=\frac{R}{d_T}>1.
  \label{eq:radial-radius-and-distortion}
\end{align}
The lower bound follows from $\nu_T\cdot y=d_T$ and the upper bound from convexity of the Euclidean norm. Furthermore, $d_T<R$. A support plane at distance $R$ would be tangent to the sphere and could not contain three distinct non-collinear points.

For notational simplicity, $D\Psi_T(y)$ and $D^2\Psi_T(y)$ denote the ambient Euclidean derivatives of $\Psi(y)=Ry/|y|$, evaluated at $y\in T$. Their arguments will lie in $V_T$ whenever they are used intrinsically.

\begin{lemma}[Metric and area distortion of the radial correction]
  \label{lem:metric-distortion}
  Let $y\in T$, set $x:=\Psi_T(y)$ and $r:=|y|$, and regard the restricted derivative as a linear map
  \begin{align*}
    D\Psi(y)|_{V_T}:
    (V_T,|\cdot|)\longrightarrow(T_x\Sph_R^2,|\cdot|).
  \end{align*}
  Then, for any $\xi\in V_T$,
  \begin{align}
    \chi_T^{-1}|\xi|
    \leq|D\Psi(y)\xi|
    \leq\chi_T|\xi|.
    \label{eq:radial-metric-bounds}
  \end{align}
  The two-dimensional surface Jacobian of $\Psi_T:T\to K$ is
  \begin{align}
    J_T(y)
    :=\sqrt{\det\!\left(
      (D\Psi(y)|_{V_T})^{\top}(D\Psi(y)|_{V_T})
    \right)}
    =\frac{R^2d_T}{|y|^3},
    \label{eq:radial-surface-jacobian}
  \end{align}
  where the determinant is computed with respect to any orthonormal basis of $V_T$.  Consequently,
  \begin{align}
    \chi_T^{-1}\leq J_T(y)\leq\chi_T^2.
    \label{eq:radial-jacobian-bounds}
  \end{align}
  Finally, equip the space of bilinear maps $\R^3\times\R^3\to\R^3$ with the operator norm
  \begin{align}
    \|D^2\Psi(y)\|_{\mathrm{op}}
    :=\sup_{\substack{\xi,\eta\in\R^3\\|\xi|=|\eta|=1}}
    |D^2\Psi(y)[\xi,\eta]|.
    \label{eq:radial-second-derivative-operator-norm}
  \end{align}
  With this norm, the following crude bound holds
  \begin{align}
    \|D^2\Psi(y)\|_{\mathrm{op}}
    \leq\frac{6\chi_T^2}{R}.
    \label{eq:radial-second-derivative-bound}
  \end{align}
\end{lemma}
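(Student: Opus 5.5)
The proof is a direct computation built on the derivative formulas \eqref{eq:radial-first-derivative}--\eqref{eq:radial-second-derivative} and the radius bounds \eqref{eq:radial-radius-and-distortion}. The one structural fact needed is how $P_y$ acts on $V_T$. For $\xi\in V_T$ we have $|P_y\xi|^2=|\xi|^2-(n_y\cdot\xi)^2$. Decompose $n_y=(d_T/r)\nu_T+w$ with $w\in V_T$; this is valid because $\nu_T\cdot y=d_T$, so the normal component of $n_y$ equals $d_T/r$. Since $|n_y|=1$, it follows that $|w|^2=1-d_T^2/r^2$. Then $n_y\cdot\xi=w\cdot\xi$, and hence
\begin{align*}
|\xi|^2\Bigl(1-\bigl(1-\tfrac{d_T^2}{r^2}\bigr)\Bigr)\le |P_y\xi|^2\le |\xi|^2 .
\end{align*}

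\textbf{Metric bounds.} Multiplying by $R/r$ gives
\begin{align*}
\frac{Rd_T}{r^2}|\xi|\le|D\Psi(y)\xi|\le\frac{R}{r}|\xi|.
\end{align*}
For the upper bound, $d_T\le r$ gives $R/r\le R/d_T=\chi_T$. For the lower bound, $r\le R$ gives $Rd_T/r^2\ge d_T/R=\chi_T^{-1}$. Together these give \eqref{eq:radial-metric-bounds}.

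\textbf{Jacobian.} Take the orthonormal basis of $V_T$ consisting of $e_1$, the unit vector along $w$ (any unit vector if $w=0$), and $e_2\perp w$. Then $P_ye_2=e_2$ and $P_ye_1=e_1-(w\cdot e_1)n_y$. Since $P_y$ is an orthogonal projection, $P_ye_1\cdot P_ye_2=e_1\cdot P_ye_2=0$. Also $|P_ye_1|^2=1-|w|^2=d_T^2/r^2$. So the Gram matrix is diagonal with entries $(R/r)^2d_T^2/r^2$ and $(R/r)^2$, and
\begin{align*}
J_T(y)=\frac{R}{r}\cdot\frac{Rd_T}{r^2}=\frac{R^2d_T}{r^3}.
\end{align*}
The value is basis independent because a change of orthonormal basis multiplies the Gram determinant by $\det^2=1$. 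The bounds \eqref{eq:radial-jacobian-bounds} follow from $d_T\le r\le R$: at $r=d_T$ we get $J_T=R^2/d_T^2=\chi_T^2$, and at $r=R$ we get $J_T=d_T/R=\chi_T^{-1}$, and $J_T$ is monotone in $r$.

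\textbf{Second derivative.} For unit $\xi,\eta$, the first group of terms in \eqref{eq:radial-second-derivative} has norm at most $(R/r^3)\cdot 3r=3R/r^2$. The last term has norm at most $3R\,r^3/r^5=3R/r^2$. Hence $\|D^2\Psi(y)\|_{\mathrm{op}}\le 6R/r^2\le 6R/d_T^2=6\chi_T^2/R$, which is \eqref{eq:radial-second-derivative-bound}.

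The only step that needs care is the exact Jacobian, specifically the orthogonality of $P_ye_1$ and $P_ye_2$. That orthogonality follows from the projection property once the basis is aligned with the in-plane component $w$ of $n_y$.
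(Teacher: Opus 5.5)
Your proof is correct and follows essentially the paper's route. The paper also writes $y=d_T\nu_T+z$ and aligns an orthonormal basis of $V_T$ with $z$ (your $w=z/r$). It reads off the singular values $Rd_T/r^2$ and $R/r$ of $D\Psi(y)|_{V_T}$, then bounds $D^2\Psi$ termwise by $6R/r^2\le 6\chi_T^2/R$. The only cosmetic difference is that you get the metric bounds directly from $(w\cdot\xi)^2\le|w|^2|\xi|^2$ before computing the Gram matrix, whereas the paper derives both the metric bounds and the Jacobian from the same singular values.
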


\begin{proof}
  Write $y=d_T\nu_T+z$ with $z\in V_T$.  Suppose first that $z\neq0$, and choose
  \begin{align*}
    e_{\mathrm{rad}}:=\frac{z}{|z|}
  \end{align*}
  and a unit vector $e_{\mathrm{tan}}\in V_T$ orthogonal to
  $e_{\mathrm{rad}}$.  Since
  \begin{align*}
    n_y=\frac{d_T}{r}\nu_T+\frac{|z|}{r}e_{\mathrm{rad}},
  \end{align*}
  one has
  \begin{align*}
    |P_ye_{\mathrm{rad}}|^2
    =1-(e_{\mathrm{rad}}\cdot n_y)^2
    =\frac{d_T^2}{r^2},
    \quad
    P_ye_{\mathrm{tan}}=e_{\mathrm{tan}},
  \end{align*}
  and the two images are orthogonal. Thus, the singular values of $P_y|_{V_T}$ are $d_T/r$ and $1$, and those of $D\Psi(y)|_{V_T}=(R/r)P_y|_{V_T}$ are
  \begin{align}
    \sigma_{\min}(y)=\frac{Rd_T}{r^2},
    \quad
    \sigma_{\max}(y)=\frac{R}{r}.
    \label{eq:radial-singular-values}
  \end{align}
  If $z=0$, then $r=d_T$, $n_y=\nu_T$, and $P_y|_{V_T}=I_{V_T}$, so the same singular-value formulas remain valid. Because $d_T\leq r\leq R$,
  \begin{align*}
    \sigma_{\min}(y)
    \geq\frac{d_T}{R}=\chi_T^{-1},
    \quad
    \sigma_{\max}(y)
    \leq\frac{R}{d_T}=\chi_T.
  \end{align*}
  This proves \eqref{eq:radial-metric-bounds}. The product of the singular values gives \eqref{eq:radial-surface-jacobian}, and
  \begin{align*}
    \chi_T^{-1}
    =\frac{d_T}{R}
    \leq\frac{R^2d_T}{r^3}
    \leq\frac{R^2}{d_T^2}
    =\chi_T^2,
  \end{align*}
  proving \eqref{eq:radial-jacobian-bounds}. Finally, \eqref{eq:radial-second-derivative} and Cauchy--Schwarz give
  \begin{align*}
    |D^2\Psi(y)[\xi,\eta]|
    \leq\frac{6R}{r^2}|\xi|\,|\eta|
    \leq\frac{6\chi_T^2}{R}|\xi|\,|\eta|,
  \end{align*}
  which proves \eqref{eq:radial-second-derivative-bound}.
\end{proof}

\subsection{Properties of the complete element map}
The preceding bounds for $\Psi_T$ combine with the affine map $\Phi_T$ to give the corresponding properties of the complete map $F_K$.
\begin{theorem}[Regularity of the complete spherical element map]
  \label{thm:complete-element-map}
  The map $F_K:\widehat T\to K$ is a homeomorphism and restricts to a $\mathcal C^\infty$-diffeomorphism from $\operatorname{int}\widehat T$ onto the interior of $K$ relative to $\Sph_R^2$. Its inverse is
  \begin{align}
    F_K^{-1}(x)
    =A_{\widetilde T}^{-1}Q_T^{\top}
    \left(\frac{d_T}{\nu_T\cdot x}x-p_1\right),
    \quad x\in K.
    \label{eq:complete-element-inverse}
  \end{align}
  For $\hat x\in\widehat T$ and $y:=\Phi_T(\hat x)$,
  \begin{align}
    &DF_K(\hat x) =D\Psi_T(y)B_T,
    \quad
    D^2F_K(\hat x)[\hat\xi,\hat\eta]
    =D^2\Psi_T(y)[B_T\hat\xi,B_T\hat\eta],
    \quad \hat\xi,\hat\eta\in\R^2,
    \notag\\
    &\operatorname{rank}DF_K(\hat x) =2.
    \label{eq:complete-map-differentials}
  \end{align}
  The first fundamental matrix and surface Jacobian are, respectively,
  \begin{align*}
    g_K(\hat x):=DF_K(\hat x)^{\top}DF_K(\hat x),
    \quad
    J_{F_K}(\hat x):=\sqrt{\det g_K(\hat x)},
  \end{align*}
  and the metric satisfies
  \begin{align}
    \chi_T^{-2}B_T^{\top}B_T
    \preceq g_K(\hat x)
    \preceq\chi_T^2B_T^{\top}B_T.
    \label{eq:complete-element-metric-comparison}
  \end{align}
  Its surface Jacobian and second derivative satisfy
  \begin{align}
    &J_{F_K}(\hat x)
    =J_T(y)J_{\Phi_T}
     =\frac{R^2d_T}{|y|^3}\,2|T|_2,
    \quad
    \chi_T^{-1}J_{\Phi_T}
    \leq J_{F_K}(\hat x)\leq\chi_T^2J_{\Phi_T},
    \notag\\
    &|D^2F_K(\hat x)[\hat\xi,\hat\eta]|
    \leq\frac{6\chi_T^2}{R}|B_T\hat\xi|\,|B_T\hat\eta|.
    \label{eq:complete-element-bounds}
  \end{align}
\end{theorem}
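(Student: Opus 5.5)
Everything follows from the factorisation $F_K=\Psi_T\circ\Phi_T$, Lemma~\ref{lem:radial-basic} and Lemma~\ref{lem:metric-distortion}; the task is bookkeeping.

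\textbf{Homeomorphism, diffeomorphism and inverse.} $\Phi_T:\widehat T\to T$ is an affine bijection, and $\Psi_T:T\to K$ is a homeomorphism by Lemma~\ref{lem:radial-basic}. Hence $F_K$ is a homeomorphism with $F_K^{-1}=\Phi_T^{-1}\circ G_T$. Inserting \eqref{eq:radial-inverse-map} into \eqref{eq:affine-inverse-map} gives \eqref{eq:complete-element-inverse}. For smoothness, recall that $\Psi$ extends to a smooth diffeomorphism from $\aff(T)$ onto the open hemisphere $U_T$, with smooth inverse $x\mapsto d_Tx/(\nu_T\cdot x)$. Also, $\Phi_T$ is an affine diffeomorphism from $\R^2$ onto $\aff(T)$. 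Composing, $F_K$ extends to a smooth diffeomorphism $\R^2\to U_T$. It maps $\operatorname{int}\widehat T$ onto $\Psi(\operatorname{relint}T)$, which is the relative interior of $K$, as noted after Lemma~\ref{lem:radial-basic}.

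\textbf{Derivatives and rank.} Since $\Phi_T$ is affine with $D\Phi_T=B_T$ and $D^2\Phi_T=0$, the first- and second-order chain rules give \eqref{eq:complete-map-differentials}. For the rank: $B_T$ maps $\R^2$ isomorphically onto $V_T$. By \eqref{eq:radial-metric-bounds}, $D\Psi(y)|_{V_T}$ is injective. Equivalently, $\ker D\Psi(y)=\operatorname{span}\{y\}$ meets $V_T$ only in $0$, because $\nu_T\cdot y=d_T>0$. Hence $\operatorname{rank}DF_K=2$.

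\textbf{Metric comparison.} For $\hat\xi\in\R^2$ put $\xi=B_T\hat\xi\in V_T$. Then
\[
\hat\xi^\top g_K\hat\xi=|D\Psi(y)\xi|^2 .
\]
Squaring \eqref{eq:radial-metric-bounds} gives
\[
\chi_T^{-2}|B_T\hat\xi|^2\le\hat\xi^\top g_K\hat\xi\le\chi_T^2|B_T\hat\xi|^2,
\]
which is \eqref{eq:complete-element-metric-comparison}.

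\textbf{Jacobian.} Write $B_T=Q_TA_{\widetilde T}$ and let $M$ be the $2\times2$ matrix of $D\Psi(y)|_{V_T}$ with respect to the orthonormal basis $q_{T,1},q_{T,2}$. Then
\[
g_K=A_{\widetilde T}^\top M^\top M A_{\widetilde T},
\qquad
\sqrt{\det g_K}=|\det A_{\widetilde T}|\sqrt{\det M^\top M}=J_{\Phi_T}J_T(y).
\]
The closed form then follows from \eqref{eq:radial-surface-jacobian} and \eqref{eq:affine-surface-jacobian}, and the two-sided bound from \eqref{eq:radial-jacobian-bounds}.

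\textbf{Second-derivative bound.} This is immediate from the formula for $D^2F_K$ and \eqref{eq:radial-second-derivative-bound}.

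\textbf{Main obstacle.} No step is hard. The point needing the most care is the Jacobian factorisation: one must compute the determinant in the orthonormal basis of $V_T$, so that the $3\times2$ factor $Q_T$ contributes nothing and $J_{F_K}$ splits exactly as $J_T(y)J_{\Phi_T}$.
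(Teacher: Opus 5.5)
Your proposal is correct and follows the paper's proof essentially step by step: the factorisation $F_K=\Psi_T\circ\Phi_T$, the chain rule with $D^2\Phi_T=0$, the metric comparison from \eqref{eq:radial-metric-bounds} (which also gives the rank), multiplicativity of the surface Jacobian, and \eqref{eq:radial-second-derivative-bound}. Your use of the diffeomorphism $\aff(T)\to U_T$ simply makes explicit the interior-to-interior step that the paper leaves to the remark after Lemma~\ref{lem:radial-basic}. One small imprecision: you call $M$ a $2\times2$ matrix while fixing an orthonormal basis only in the domain; take $M:=D\Psi(y)Q_T\in\R^{3\times2}$ (or also fix an orthonormal basis of $T_x\Sph_R^2$), and then $\det(M^\top M)=J_T(y)^2$ as you need.
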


\begin{proof}
  The affine map $\Phi_T:\widehat T\to T$ is a homeomorphism, and Lemma~\ref{lem:radial-basic} shows that $\Psi_T:T\to K$ is a homeomorphism. Therefore, $F_K=\Psi_T\circ\Phi_T$ is a homeomorphism, and its inverse is obtained by composing \eqref{eq:affine-inverse-map} with \eqref{eq:radial-inverse-map}.

  The map $\Psi$ is smooth on a neighbourhood of $T$, and $\Phi_T$ is affine. Furthermore, $\nu_T\cdot x>0$ on $K$, so the displayed inverse is smooth on the interior of $K$ relative to $\Sph_R^2$. Thus, the restriction of $F_K$ to $\operatorname{int}\widehat T$ is a $\mathcal C^\infty$-diffeomorphism. Because $D\Phi_T=B_T$ and $D^2\Phi_T=0$, the first- and second-order chain rules give \eqref{eq:complete-map-differentials}.

  For any $\hat\xi\in\R^2$, \eqref{eq:radial-metric-bounds} gives
  \begin{align*}
    \chi_T^{-2}|B_T\hat\xi|^2
    \leq
    \hat\xi^{\top}g_K(\hat x)\hat\xi
    =|D\Psi_T(y)B_T\hat\xi|^2
    \leq
    \chi_T^2|B_T\hat\xi|^2.
  \end{align*}
  Because $B_T$ has rank two, this proves both $\operatorname{rank}DF_K(\hat x)=2$ and the Loewner comparison \eqref{eq:complete-element-metric-comparison}. Multiplicativity of the two-dimensional surface Jacobian under composition gives
  \begin{align*}
    J_{F_K}(\hat x)
    =J_T(\Phi_T(\hat x))J_{\Phi_T},
  \end{align*}
  which proves the Jacobian identity and bounds in \eqref{eq:complete-element-bounds}. Finally, the second-derivative identity and \eqref{eq:radial-second-derivative-bound} give
  \begin{align*}
    |D^2F_K(\hat x)[\hat\xi,\hat\eta]|
    \leq\frac{6\chi_T^2}{R}
    |B_T\hat\xi|\,|B_T\hat\eta|,
  \end{align*}
  proving the last estimate in \eqref{eq:complete-element-bounds}.
\end{proof}

\subsection{Geometric interface: maximum angles and radial locality}
\label{subsec:geometric-interface}
We connect the three geometric controls used later: the intrinsic and chordal maximum-angle parameters and the radial locality factor. We state both angular implications and show that relative refinement forces $\chi_T\to1$.

Let $\{\mathscr K_h\}_h$ be a family of finite collections of non-degenerate exact spherical triangles, and let $T_K$ be the chordal core of $K\in\mathscr K_h$.

For each $K$, let $\alpha_K$ be its largest intrinsic angle. The angle at a vertex is measured between the tangent directions of the two incident shorter great-circle arcs.
\begin{definition}[Intrinsic spherical maximum-angle condition]
  \label{def:intrinsic-spherical-maximum-angle}
  The family $\{\mathscr K_h\}_h$ satisfies the \emph{intrinsic spherical maximum-angle condition} if there is a constant $\delta\in(0,\pi)$, independent of $h$ and $K$, such that
  \begin{align}
    \alpha_K\leq\pi-\delta
    \quad\text{for any }K\in\mathscr K_h\text{ and any }h.
    \label{eq:intrinsic-spherical-angle-condition}
  \end{align}
\end{definition}

This condition can be checked from the vertices. Under Convention~\ref{conv:chordal-labelling}, the longest chordal edge is $[p_2,p_3]$. Chordal and geodesic edge lengths satisfy
\begin{align*}
  |p_i-p_j|=2R\sin\left(\frac{d_{\Sph}(p_i,p_j)}{2R}\right).
\end{align*}
Because $s\mapsto2R\sin(s/(2R))$ is strictly increasing on $[0,\pi R]$, the corresponding geodesic edge is also longest. By \cite[Lemma~3.2]{Ishizaka2026SphericalMaxAngle}, the largest intrinsic angle therefore occurs at $p_1$. Define
\begin{align}
  t_K^{\mathrm{sr}}
  :=\frac{R|\det(p_1,p_2,p_3)|}
  {|p_1\times p_2|\,|p_1\times p_3|},
  \quad
  \mu_K^{\mathrm{sr}}:=\frac2{t_K^{\mathrm{sr}}}.
  \label{eq:intrinsic-spherical-sr-interface}
\end{align}
Then, \cite[Theorem~3.4]{Ishizaka2026SphericalMaxAngle} gives
\begin{align}
  t_K^{\mathrm{sr}}=\sin\alpha_K,
  \quad
  \mu_K^{\mathrm{sr}}=\frac2{\sin\alpha_K}.
  \label{eq:intrinsic-spherical-maximum-angle-interface}
\end{align}
The triangles considered here are geodesically convex and lie in an open hemisphere. Their angles lie in $(0,\pi)$ and have sum greater than $\pi$, hence $\pi/3<\alpha_K<\pi$. The condition in Definition~\ref{def:intrinsic-spherical-maximum-angle} can therefore be written as
\begin{align}
  \inf_h\inf_{K\in\mathscr K_h}t_K^{\mathrm{sr}}
  \geq c_{\mathrm{sr}}>0.
  \label{eq:intrinsic-spherical-uniform-condition}
\end{align}
Indeed, \eqref{eq:intrinsic-spherical-angle-condition} gives this bound with $c_{\mathrm{sr}}=\min\{\sqrt3/2,\sin\delta\}$; conversely, \eqref{eq:intrinsic-spherical-uniform-condition} gives \eqref{eq:intrinsic-spherical-angle-condition} with $\delta=\arcsin c_{\mathrm{sr}}$.

We use \eqref{eq:intrinsic-spherical-uniform-condition} as the algebraic form of the \emph{intrinsic spherical maximum-angle condition}. Equivalently, there is a constant $\gamma_{\mathrm{sr}}\geq2$, independent of $h$ and $K$, such that
\begin{align*}
  \sup_h\sup_{K\in\mathscr K_h}\mu_K^{\mathrm{sr}}
  \leq\gamma_{\mathrm{sr}}.
\end{align*}

The affine interpolation estimates use the geometry of the chordal core. Let $\mathcal R_T$ denote the Euclidean circumradius of $T$, and we set $H_T:=4\mathcal R_T$. The extended sine rule gives $H_T=\mu_T h_T$. The circumradius formula therefore gives
\begin{align}
  H_T=\frac{h_{T,1}h_{T,2}}{|T|_2}h_T,
  \quad
  \mu_T=\frac{H_T}{h_T}
  =\frac{h_{T,1}h_{T,2}}{|T|_2}.
  \label{eq:chordal-geometric-parameters}
\end{align}
Because $\beta_T$ is the largest chordal angle and
\begin{align*}
  |T|_2=\frac12h_{T,1}h_{T,2}\sin\beta_T,
\end{align*}
we have
\begin{align}
  \mu_T=\frac2{\sin\beta_T}.
  \label{eq:mu-sine-identity}
\end{align}
Thus,
\begin{align}
  t_T=\sin\beta_T=\frac2{\mu_T},
  \quad
  \mu_Tt_T=2,
  \quad
  \mu_T\geq2.
  \label{eq:angular-factor-identity}
\end{align}
\begin{definition}[Chordal semi-regularity]
  \label{def:semiregularity}
  A family is \emph{chordally semi-regular} if there is a constant $\gamma_0$ such that $\mu_T\leq\gamma_0$ for all its chordal cores. By \eqref{eq:mu-sine-identity}, this is the maximum-angle condition for the planar triangles $T$.
\end{definition}
Thus, $\mu_T$ measures the chordal angle $\beta_T$, whereas $\mu_K^{\mathrm{sr}}$ measures the spherical angle $\alpha_K$. In statements involving $\mu_T$, ``semi-regularity'' means chordal semi-regularity. The sharp comparison from \cite[Theorem~4.5]{Ishizaka2026SphericalMaxAngle},
\begin{align}
  \mu_{T_K}\leq\frac2{\sqrt3}\mu_K^{\mathrm{sr}},
  \label{eq:spherical-chordal-comparison}
\end{align}
shows that the intrinsic spherical maximum-angle condition implies chordal semi-regularity.

Radial distortion is measured by $\chi_T=R/d_T$. The following identity expresses it in terms of $H_T$.

\begin{theorem}[Support-plane identity]
  \label{thm:support-plane-localisation}
  Let $c_T=d_T\nu_T$ be the orthogonal projection of the origin onto $\aff (T)$. Then,
  \begin{align}
    |p_i-c_T|&=\mathcal R_T=\frac{H_T}{4},
    \quad i=1,2,3,
    \notag\\
    d_T^2&=R^2-\frac{H_T^2}{16},
    \quad
    \chi_T=\left(1-\frac{H_T^2}{16R^2}\right)^{-1/2},
    \label{eq:support-plane-localisation}
  \end{align}
  where $\mathcal R_T$ is the Euclidean circumradius and $0<H_T<4R$.
\end{theorem}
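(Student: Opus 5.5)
The plan is Pythagoras in the support plane. Each vertex satisfies $|p_i|=R$ and $p_i\in\aff(T)=c_T+V_T$, so $p_i-c_T\in V_T$. Since $c_T=d_T\nu_T$ is parallel to $\nu_T$, the vector $p_i-c_T$ is orthogonal to $c_T$. Hence
\begin{align*}
  R^2=|p_i|^2=|c_T|^2+|p_i-c_T|^2=d_T^2+|p_i-c_T|^2,
  \quad i=1,2,3.
\end{align*}
Thus $|p_i-c_T|=\sqrt{R^2-d_T^2}$ is independent of $i$, so $c_T$ is a point of the plane $\aff(T)$ equidistant from $p_1,p_2,p_3$.

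In the two-dimensional plane $\aff(T)$, three non-collinear points have a unique circumcentre. Therefore $c_T$ is the circumcentre of $T$, and $|p_i-c_T|=\mathcal R_T$. The identity $\mathcal R_T=H_T/4$ is just the definition $H_T:=4\mathcal R_T$.

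Substituting into the Pythagorean relation gives $d_T^2=R^2-\mathcal R_T^2=R^2-H_T^2/16$. Dividing by $R^2$ and taking square roots gives $\chi_T=R/d_T=(1-H_T^2/(16R^2))^{-1/2}$, using $d_T>0$.

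For the range of $H_T$, note that $H_T>0$ because $T$ is non-degenerate, so $\mathcal R_T>0$. The strict bound $H_T<4R$ is equivalent to $d_T>0$, which holds by the standing assumption $0\notin\aff(T)$. (Alternatively, $d_T<R$ from \eqref{eq:radial-radius-and-distortion} gives $H_T>0$ directly.)

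There is no real obstacle. The only point requiring care is justifying that the equidistant point lies in the plane of $T$, so that it is the planar circumcentre, and that it is unique; both follow from $c_T\in\aff(T)$ and the non-collinearity of the vertices.
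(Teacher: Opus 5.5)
Your proof is correct and follows essentially the paper's argument: Pythagoras with $c_T\perp V_T$ shows that $c_T$ is the in-plane point equidistant from the vertices, hence the circumcentre; substituting then gives $d_T^2$ and $\chi_T$, and $\mathcal R_T>0$, $d_T>0$ give the range of $H_T$. The only cosmetic difference is that the paper invokes the circumradius formula $\mathcal R_T=h_{T,1}h_{T,2}h_T/(4|T|_2)$ to obtain $\mathcal R_T=H_T/4$, whereas you correctly note that this is just the definition $H_T:=4\mathcal R_T$.
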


\begin{proof}
Because $p_i-c_T\in V_T$ and $c_T\perp V_T$,
\begin{align*}
  |p_i-c_T|^2=R^2-d_T^2,
  \quad i=1,2,3.
\end{align*}
Thus, $c_T$ is the circumcentre of $T$. The circumradius formula gives
\begin{align*}
  \mathcal R_T
  =\frac{h_{T,1}h_{T,2}h_T}{4|T|_2}
  =\frac{H_T}{4}.
\end{align*}
Substituting this into $d_T^2+\mathcal R_T^2=R^2$ gives \eqref{eq:support-plane-localisation}, since $\chi_T=R/d_T$. Finally, $\mathcal R_T>0$ and $d_T>0$ give $0<H_T<4R$.
\end{proof}

If $y=\sum_{i=1}^3\lambda_ip_i\in T$, where $\lambda_i\geq0$ and $\sum_{i=1}^3\lambda_i=1$, convexity gives $|y-c_T|\leq\sum_i\lambda_i|p_i-c_T|=H_T/4$, while equality is attained at any vertex. Therefore,
\begin{align}
  \max_{y\in T}|y-c_T|=\frac{H_T}{4}.
  \label{eq:support-plane-distance-bound}
\end{align}
We set
\begin{align}
  \vartheta_T:=\frac{H_T}{4R},
  \quad
  0<\vartheta_T<1,
  \quad
  \chi_T=(1-\vartheta_T^2)^{-1/2}.
  \label{eq:locality-parameter}
\end{align}

\begin{definition}[Spherical locality]
  \label{def:spherical-locality}
  A family is called \emph{uniformly spherically local} if $\vartheta_T\leq\eta_0<1$ uniformly over its elements. From \eqref{eq:locality-parameter}, this is equivalent to a uniform bound on $\chi_T$.
\end{definition}

The following estimate follows from \cite[Theorem~4.3]{Ishizaka2026SphericalMaxAngle}. We include a short proof using the radial differential.

\begin{proposition}[Reverse angular comparison]
  \label{prop:reverse-angular-comparison}
  For any exact spherical triangle $K$ with chordal core $T$,
  \begin{align}
    \mu_K^{\mathrm{sr}}\leq\chi_T\mu_T.
    \label{eq:reverse-angular-comparison}
  \end{align}
  Consequently, if $\mu_T\leq\gamma_0$ and $\chi_T\leq\chi_0$ uniformly over a family, where $\gamma_0\geq2$ and $\chi_0\geq1$, then
  \begin{align}
    t_K^{\mathrm{sr}}\geq\frac2{\gamma_0\chi_0}>0,
    \quad
    \alpha_K\leq\pi-\arcsin\left(\frac2{\gamma_0\chi_0}\right).
    \label{eq:reverse-uniform-angle-bound}
  \end{align}
  In particular, chordal semi-regularity and uniform spherical locality imply the intrinsic spherical maximum-angle condition.
\end{proposition}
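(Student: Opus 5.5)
The plan is to compare the intrinsic angle of $K$ at the vertex $p_1$ with the chordal angle $\beta_T$ of $T$ at the same vertex, using the radial differential $D\Psi(p_1)$. By the discussion preceding \eqref{eq:intrinsic-spherical-sr-interface}, the largest intrinsic angle $\alpha_K$ sits at $p_1$, and $\mu_K^{\mathrm{sr}}=2/\sin\alpha_K$ by \eqref{eq:intrinsic-spherical-maximum-angle-interface}. Since $\mu_T=2/\sin\beta_T$ by \eqref{eq:mu-sine-identity}, the claim \eqref{eq:reverse-angular-comparison} is equivalent to
\begin{align*}
  \sin\alpha_K\geq\chi_T^{-1}\sin\beta_T .
\end{align*}

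\textbf{Step 1: identify the tangent directions.} By Lemma~\ref{lem:radial-basic}, $\Psi_T$ maps the edges $e_{12},e_{13}$ onto the great-circle arcs issuing from $p_1$. Differentiating $s\mapsto\Psi(p_1+s\xi)$ at $s=0^+$ shows that the tangent directions of these arcs at $p_1$ are $D\Psi(p_1)u$ and $D\Psi(p_1)v$, where $u:=p_2-p_1$ and $v:=p_3-p_1$. Both vectors are nonzero by \eqref{eq:radial-metric-bounds}.

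\textbf{Step 2: evaluate the distortion at $p_1$.} Since $|p_1|=R$, formula \eqref{eq:radial-first-derivative} gives $D\Psi(p_1)=P_{p_1}$. By \eqref{eq:radial-singular-values} with $r=R$, the restriction $P_{p_1}|_{V_T}$ has singular values $d_T/R$ and $1$. Hence it is a contraction, $|P_{p_1}\xi|\leq|\xi|$, and its two-dimensional Jacobian equals $d_T/R$ (this is also \eqref{eq:radial-surface-jacobian} at $r=R$). The area of the parallelogram spanned by the images therefore scales exactly by this Jacobian:
\begin{align*}
  |P_{p_1}u\times P_{p_1}v|=\frac{d_T}{R}\,|u\times v| .
\end{align*}

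\textbf{Step 3: compare the sines.} Combining the two facts from Step~2,
\begin{align*}
  \sin\alpha_K
  =\frac{|P_{p_1}u\times P_{p_1}v|}{|P_{p_1}u|\,|P_{p_1}v|}
  \geq\frac{(d_T/R)\,|u\times v|}{|u|\,|v|}
  =\chi_T^{-1}\sin\beta_T .
\end{align*}
Taking reciprocals and multiplying by $2$ gives $\mu_K^{\mathrm{sr}}\leq\chi_T\mu_T$.

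\textbf{Step 4: the consequences.} Under the uniform bounds $\mu_T\leq\gamma_0$ and $\chi_T\leq\chi_0$, we get $t_K^{\mathrm{sr}}=2/\mu_K^{\mathrm{sr}}\geq 2/(\gamma_0\chi_0)$. Since $\alpha_K\in(\pi/3,\pi)$, a lower bound $\sin\alpha_K\geq c$ forces $\alpha_K\leq\pi-\arcsin c$: if $\alpha_K\leq\pi/2$ this is immediate, and otherwise it follows because $\sin$ is decreasing on $[\pi/2,\pi)$. This is \eqref{eq:reverse-uniform-angle-bound}, and the final sentence of the proposition follows from the equivalence with \eqref{eq:intrinsic-spherical-uniform-condition}.

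\textbf{Main obstacle.} The only delicate point is Step~1, namely justifying that the intrinsic angle is measured by the differential images of the chordal edge vectors. This follows because $\Psi_T|_{e_{1j}}$ is a smooth parametrisation of the arc with nonzero derivative at $p_1$. Once that is settled, the rest is the singular-value computation already recorded in Lemma~\ref{lem:metric-distortion}.
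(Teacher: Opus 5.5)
Your proposal is correct and follows essentially the same route as the paper: restrict $D\Psi(p_1)=P_{p_1}$ to $V_T$ and use its singular values $d_T/R$ and $1$. These give the exact area scaling $|A\xi_1\times A\xi_2|=(d_T/R)|\xi_1\times\xi_2|$ together with the contraction $|A\xi_i|\le|\xi_i|$, hence $\sin\alpha_K\ge\sin\beta_T/\chi_T$; your justification of the tangent directions in Step~1 and the case split for the $\arcsin$ bound in Step~4 fill in details that the paper leaves implicit.
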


\begin{proof}
With the canonical labelling, we set
\begin{align*}
  A:=D\Psi(p_1)|_{V_T},
  \quad
  \xi_1:=p_2-p_1,
  \quad
  \xi_2:=p_3-p_1.
\end{align*}
The vectors $\xi_1,\xi_2$ form the angle $\beta_T$, while $A\xi_1,A\xi_2$ point along the two spherical edges leaving $p_1$ and form the angle $\alpha_K$. Because $|p_1|=R$, \eqref{eq:radial-singular-values} gives singular values $d_T/R$ and $1$ for $A$. Therefore,
\begin{align*}
  |A\xi_1\times A\xi_2|
  =\frac{d_T}{R}|\xi_1\times\xi_2|,
  \quad
  |A\xi_i|\leq|\xi_i|,\quad i=1,2.
\end{align*}
It follows that
\begin{align*}
  \sin\alpha_K
  &=\frac{|A\xi_1\times A\xi_2|}{|A\xi_1|\,|A\xi_2|}
  \geq\frac{d_T}{R}
    \frac{|\xi_1\times\xi_2|}{|\xi_1|\,|\xi_2|}
  =\frac{\sin\beta_T}{\chi_T}.
\end{align*}
Using $\mu_K^{\mathrm{sr}}=2/\sin\alpha_K$ and $\mu_T=2/\sin\beta_T$ gives \eqref{eq:reverse-angular-comparison}. The uniform bounds then give \eqref{eq:reverse-uniform-angle-bound}.
\end{proof}

The spherical maximum-angle condition gives uniform chordal angle control. Relative refinement also makes the radial distortion tend to one, as the following corollary shows.

\begin{corollary}[Intrinsic spherical criterion for the interpolation geometry]
  \label{cor:intrinsic-partII-geometric-interface}
  Suppose \eqref{eq:intrinsic-spherical-uniform-condition} holds. Then,
  \begin{align}
    \sup_h\max_{K\in\mathscr K_h}\mu_{T_K}
    \leq\frac4{\sqrt3\,c_{\mathrm{sr}}}.
    \label{eq:intrinsic-partII-chordal-control}
  \end{align}
  If also
  $q_h:=\max_{K\in\mathscr K_h}h_{T_K}/R\to0$, then
  \begin{align}
    \max_{K\in\mathscr K_h}\vartheta_{T_K}
    \leq\frac{q_h}{\sqrt3\,c_{\mathrm{sr}}}\longrightarrow0,
    \quad
    \max_{K\in\mathscr K_h}\chi_{T_K}\longrightarrow1.
    \label{eq:intrinsic-partII-radial-control}
  \end{align}
\end{corollary}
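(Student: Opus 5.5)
The first bound follows from the sharp comparison \eqref{eq:spherical-chordal-comparison} together with the algebraic form of the intrinsic condition. By \eqref{eq:intrinsic-spherical-uniform-condition}, every $K\in\mathscr K_h$ satisfies $t_K^{\mathrm{sr}}\geq c_{\mathrm{sr}}$, and hence
\begin{align*}
  \mu_K^{\mathrm{sr}}=\frac{2}{t_K^{\mathrm{sr}}}\leq\frac{2}{c_{\mathrm{sr}}}.
\end{align*}
Inserting this into $\mu_{T_K}\leq(2/\sqrt3)\mu_K^{\mathrm{sr}}$ gives $\mu_{T_K}\leq 4/(\sqrt3\,c_{\mathrm{sr}})$. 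The right-hand side depends neither on $K$ nor on $h$, so taking the maximum over $K$ and then the supremum over $h$ yields \eqref{eq:intrinsic-partII-chordal-control}.

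For the locality bound, I would use $H_T=\mu_Th_T$ from \eqref{eq:chordal-geometric-parameters} together with the definition $\vartheta_T=H_T/(4R)$ in \eqref{eq:locality-parameter}. These give
\begin{align*}
  \vartheta_{T_K}=\frac{\mu_{T_K}h_{T_K}}{4R}
  \leq\frac{1}{4}\cdot\frac{4}{\sqrt3\,c_{\mathrm{sr}}}\cdot\frac{h_{T_K}}{R}
  \leq\frac{q_h}{\sqrt3\,c_{\mathrm{sr}}},
\end{align*}
where the first inequality uses \eqref{eq:intrinsic-partII-chordal-control}. Maximising over $K\in\mathscr K_h$ gives the first part of \eqref{eq:intrinsic-partII-radial-control}, and this bound tends to $0$ because $q_h\to0$.

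For the last claim, \eqref{eq:locality-parameter} gives $\chi_T=(1-\vartheta_T^2)^{-1/2}$, which is increasing in $\vartheta_T\in(0,1)$. For $h$ small enough that $\varepsilon_h:=q_h/(\sqrt3\,c_{\mathrm{sr}})<1$, every element satisfies
\begin{align*}
  1<\chi_{T_K}\leq(1-\varepsilon_h^2)^{-1/2}.
\end{align*}
Here the lower bound comes from \eqref{eq:radial-radius-and-distortion}. The upper bound tends to $1$ as $\varepsilon_h\to0$, so $\max_K\chi_{T_K}\to1$.

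The only real content lies in the imported comparison \eqref{eq:spherical-chordal-comparison}, which we take as given; the remaining steps are bookkeeping. The one point needing care is to invoke the bound $\varepsilon_h<1$ only for sufficiently small $h$, so that the expression $(1-\varepsilon_h^2)^{-1/2}$ is well defined.
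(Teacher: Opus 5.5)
Your proposal is correct and follows the paper's proof essentially step for step. You combine the imported comparison $\mu_{T_K}\leq(2/\sqrt3)\mu_K^{\mathrm{sr}}$ with $t_K^{\mathrm{sr}}\geq c_{\mathrm{sr}}$, then use $\vartheta_{T_K}=(\mu_{T_K}/4)(h_{T_K}/R)$, and finally $\chi_{T_K}=(1-\vartheta_{T_K}^2)^{-1/2}$; the monotonicity remark and the explicit lower bound $\chi_{T_K}>1$ just spell out the paper's ``continuity'' step.
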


\begin{proof}
From \eqref{eq:spherical-chordal-comparison},
\begin{align*}
  \mu_{T_K}
  \leq\frac2{\sqrt3}\mu_K^{\mathrm{sr}}
  =\frac4{\sqrt3\,t_K^{\mathrm{sr}}}
  \leq\frac4{\sqrt3\,c_{\mathrm{sr}}}.
\end{align*}
Because $\vartheta_{T_K}=(\mu_{T_K}/4)(h_{T_K}/R)$, this gives the bound for $\vartheta_{T_K}$ in \eqref{eq:intrinsic-partII-radial-control}. The limit for $\chi_{T_K}$ follows from $\chi_{T_K}=(1-\vartheta_{T_K}^2)^{-1/2}$.
\end{proof}

\begin{corollary}[Automatic locality under chordal semi-regular refinement]
  \label{cor:refining-locality}
  Suppose
  \begin{align*}
    \sup_h\max_{K\in\mathscr K_h}\mu_{T_K}\leq\gamma_0.
  \end{align*}
  Then, for any $h$ and any $K\in\mathscr K_h$,
  \begin{align}
    \vartheta_{T_K}
    =\frac{\mu_{T_K}}4\frac{h_{T_K}}R
    \leq\frac{\gamma_0}{4}\frac{h_{T_K}}R.
    \label{eq:locality-refinement-bound}
  \end{align}
  Consequently, if
  $\max_{K\in\mathscr K_h}h_{T_K}/R\to0$, then
  \begin{align}
    \max_{K\in\mathscr K_h}\vartheta_{T_K}\longrightarrow0,
    \quad
    \max_{K\in\mathscr K_h}\chi_{T_K}\longrightarrow1.
    \label{eq:uniform-locality-convergence}
  \end{align}
\end{corollary}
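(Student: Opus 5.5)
The argument combines the circumradius identity \eqref{eq:chordal-geometric-parameters} with the definition \eqref{eq:locality-parameter} of $\vartheta_T$, applied to each chordal core $T_K$ separately. By \eqref{eq:chordal-geometric-parameters}, $H_{T_K}=\mu_{T_K}h_{T_K}$. Substituting this into $\vartheta_{T_K}=H_{T_K}/(4R)$ gives the identity
\begin{align*}
  \vartheta_{T_K}=\frac{\mu_{T_K}}{4}\frac{h_{T_K}}{R},
\end{align*}
which is the equality in \eqref{eq:locality-refinement-bound}. The hypothesis $\mu_{T_K}\leq\gamma_0$ then yields the stated inequality at once, for every $h$ and every $K\in\mathscr K_h$.

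For the limits, we take the maximum over $K\in\mathscr K_h$ in \eqref{eq:locality-refinement-bound}. This gives
\begin{align*}
  \max_{K\in\mathscr K_h}\vartheta_{T_K}\leq\frac{\gamma_0}{4}\max_{K\in\mathscr K_h}\frac{h_{T_K}}{R},
\end{align*}
and the right-hand side tends to zero by assumption. Since $0<\vartheta_{T_K}$, this proves the first limit in \eqref{eq:uniform-locality-convergence}.

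For $\chi_{T_K}$, we use $\chi_{T_K}=(1-\vartheta_{T_K}^2)^{-1/2}$ from \eqref{eq:locality-parameter}. The map $\vartheta\mapsto(1-\vartheta^2)^{-1/2}$ is increasing and continuous on $[0,1)$, with value $1$ at $0$. Hence, once $\max_K\vartheta_{T_K}<1$ (which holds for small $h$), we obtain
\begin{align*}
  1\leq\max_{K\in\mathscr K_h}\chi_{T_K}=\Bigl(1-\max_{K\in\mathscr K_h}\vartheta_{T_K}^2\Bigr)^{-1/2}\longrightarrow1.
\end{align*}
The lower bound $\chi_{T_K}\geq1$ comes from \eqref{eq:radial-radius-and-distortion}.

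No step is a genuine obstacle. The only point requiring care is using the identity $H_T=\mu_Th_T$ with the canonical labelling of Convention~\ref{conv:chordal-labelling}, which is already established in \eqref{eq:chordal-geometric-parameters}. It is also worth noting that each $\mathscr K_h$ is finite, so all the maxima above are attained.
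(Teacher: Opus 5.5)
Your proposal is correct and follows the paper's proof essentially step for step: $H_{T_K}=\mu_{T_K}h_{T_K}$ gives the equality, the uniform bound on $\mu_{T_K}$ gives the inequality, and taking maxima together with the monotonicity and continuity of $\vartheta\mapsto(1-\vartheta^2)^{-1/2}$ gives both limits. Your caveat ``once $\max_K\vartheta_{T_K}<1$'' is unnecessary, since $0<\vartheta_{T_K}<1$ always holds by \eqref{eq:locality-parameter}.
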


\begin{proof}
  The equality in \eqref{eq:locality-refinement-bound} follows from $H_{T_K}=\mu_{T_K}h_{T_K}$, and its inequality follows from the uniform bound $\mu_{T_K}\leq\gamma_0$. Taking maxima over $K\in\mathscr K_h$ gives $\max_{K\in\mathscr K_h}\vartheta_{T_K}\leq(\gamma_0/4) \max_{K\in\mathscr K_h} h_{T_K}/R\to0$; continuity in \eqref{eq:locality-parameter} then gives $\max_{K\in\mathscr K_h}\chi_{T_K}\to1$.
\end{proof}

Corollary~\ref{cor:refining-locality} and Proposition~\ref{prop:reverse-angular-comparison} show that uniform chordal semi-regularity, together with $\max_{K\in\mathscr K_h}h_{T_K}/R\to0$, implies the intrinsic spherical maximum-angle condition for any sufficiently fine collections.

\begin{remark}[Need for locality]
Chordal semi-regularity alone does not control the spherical maximum angle. The equilateral family in Section~\ref{subsec:radial-exponent-sharpness} has support plane $x_3=\varepsilon$, circumcentre $(0,0,\varepsilon)$, and circumradius $\sqrt{R^2-\varepsilon^2}$. Its parameters satisfy
\begin{align*}
  \mu_{T_\varepsilon^{\mathrm{loc}}}=\frac4{\sqrt3},
  \quad
  \chi_{T_\varepsilon^{\mathrm{loc}}}=\frac R\varepsilon
  \longrightarrow\infty.
\end{align*}
The three spherical angles are equal, and their common value
$\alpha_\varepsilon$ satisfies
\begin{align*}
  \cos\alpha_\varepsilon
  =\frac{3\varepsilon^2-R^2}{R^2+3\varepsilon^2}
  \longrightarrow-1.
\end{align*}
Thus, $\alpha_\varepsilon\to\pi$ even though the chordal cores are all equilateral.
\end{remark}

\section{Local \texorpdfstring{$\mathbb P^1$}{P1} Lagrange interpolation on an exact spherical triangle}
\label{sec:local-interpolation}
We transfer the affine anisotropic estimates to the exact spherical element while keeping $h_{T,1}$, $h_{T,2}$, $\mu_T$, and $\chi_T$ explicit. The weighted estimates hold on any non-degenerate element with $d_T>0$. Uniform locality is needed only to bound the radial factors uniformly; the local $L^2$ estimate requires no semi-regularity assumption, whereas the standard $H^1$ rate requires uniform control of $\mu_T$.

The argument is organised in four stages. We first define the mapped finite-element space and identify the spherical interpolation error with the affine interpolation error on the chordal core. We then derive the sharp radial differential identities and the corresponding norm-transfer formulas. Next, we establish the affine-core anisotropic estimates. Finally, these ingredients are combined in Theorem~\ref{thm:final-local-anisotropic} and then rewritten in intrinsic spherical form.

\subsection{Notation for transported directional quantities}
Recall that $w=v\circ\Psi_T$ is the pullback of $v$ from $K$ to the chordal core $T$. We write $\tilde w:=w\circ\iota_T$ for its planar representative on $\widetilde T$. For a generic scalar function $u$ used in the norm-transfer lemma, we instead write $u_T:=u\circ\Psi_T$.

For a non-zero chordal direction $\xi\in V_T$, $(\tau_\xi,a_\xi,e_\xi)$ denotes its transported tangent vector, its length, and the corresponding unit tangent direction. The weight $\omega_\xi$ occurs in the exact first-directional $L^2$ transfer, and $\mathscr Q_{\xi,\eta}$ in the second-directional transfer. We write $\mathscr E_{ij}$ and $\mathscr G_i$ for the weighted norms used in the estimates, and $\mathscr H_{\xi,\eta}$ and $\mathscr P_\xi$ for the corresponding unweighted intrinsic Hessian and gradient data.

\subsection{Mapped polynomial space and nodal interpolant}
Recall the barycentric coordinates $\hat\lambda_1,\hat\lambda_2,\hat\lambda_3$ of $\widehat T$,
\begin{align*}
  \hat\lambda_1(\hat x)=1-\hat x_1-\hat x_2,
  \quad
  \hat\lambda_2(\hat x)=\hat x_1,
  \quad
  \hat\lambda_3(\hat x)=\hat x_2.
\end{align*}
For any integer $m\geq0$, the intrinsic polynomial space of degree at most $m$ on the chordal triangle is
\begin{align*}
  \mathbb P^m(T)
  :=
  \{\hat p\circ\Phi_T^{-1}:\ \hat p\in\mathbb P^m(\widehat T)\}.
\end{align*}
In particular, the barycentric basis of $\mathbb P^1(T)$ is
\begin{align}
  \lambda_i^T:=\hat\lambda_i\circ\Phi_T^{-1},
  \quad i=1,2,3,
  \label{eq:chordal-barycentric-functions}
\end{align}
and the corresponding mapped basis functions on the exact spherical triangle are
\begin{align}
  \lambda_i^K:=\hat\lambda_i\circ F_K^{-1}
  =\lambda_i^T\circ G_T,
  \quad i=1,2,3.
  \label{eq:spherical-barycentric-functions}
\end{align}
They satisfy
\begin{align}
  \lambda_i^K(p_j)=\delta_{ij},
  \quad
  \sum_{i=1}^3\lambda_i^K=1
  \quad\text{on }K.
  \label{eq:spherical-barycentric-properties}
\end{align}

\begin{definition}[Local mapped $\mathbb P^1$ space]
  The local mapped polynomial space on $K$ is
  \begin{align}
    \mathbb P_F^1(K)
    :=\{\hat p\circ F_K^{-1}:\ \hat p\in\mathbb P^1(\widehat T)\}
    =\operatorname{span}\{\lambda_1^K,\lambda_2^K,\lambda_3^K\}.
    \label{eq:mapped-linear-space}
  \end{align}
\end{definition}

For $v\in \mathcal{C}^0(K)$, the local spherical Lagrange interpolant is defined as
\begin{align}
  I_Kv:=\sum_{i=1}^3v(p_i)\lambda_i^K\in\mathbb P_F^1(K).
  \label{eq:nodal-interpolant}
\end{align}
Let $I_{\widehat T}$ and $I_T$ denote the ordinary nodal $\mathbb P^1$ interpolation operators on $\widehat T$ and $T$, respectively.

The following commuting relation reduces the spherical interpolation error exactly to interpolation on the chordal core.

\begin{proposition}[Commuting representation of the local interpolant]
  \label{prop:interpolant-commuting}
  For any $v\in \mathcal{C}^0(K)$,
  \begin{align}
    (I_Kv)\circ F_K=I_{\widehat T}(v\circ F_K),
    \label{eq:interpolant-commuting-reference}
  \end{align}
  and
  \begin{align}
    I_Kv
    =\bigl(I_T(v\circ\Psi_T)\bigr)\circ G_T.
    \label{eq:interpolant-commuting-chordal}
  \end{align}
  Consequently,
  \begin{align}
    (v-I_Kv)\circ\Psi_T
    =w-I_Tw,
    \quad
    w=v\circ\Psi_T.
    \label{eq:interpolation-error-pullback}
  \end{align}
  In particular, the three vertex values are unisolvent for $\mathbb P_F^1(K)$, and $I_Kv=v$ for any $v\in\mathbb P_F^1(K)$.
\end{proposition}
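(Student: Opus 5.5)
The argument is a direct unwinding of the definitions, using that $F_K=\Psi_T\circ\Phi_T$ is a bijection (Theorem~\ref{thm:complete-element-map}) with inverse $\Phi_T^{-1}\circ G_T$.

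For \eqref{eq:interpolant-commuting-reference}, we compose \eqref{eq:nodal-interpolant} with $F_K$. By \eqref{eq:spherical-barycentric-functions}, $\lambda_i^K\circ F_K=\hat\lambda_i$. Also $F_K(\hat p_i)=p_i$ by \eqref{eq:complete-element-map-explicit}, so $v(p_i)=(v\circ F_K)(\hat p_i)$. Hence
\begin{align*}
(I_Kv)\circ F_K=\sum_{i=1}^3 (v\circ F_K)(\hat p_i)\,\hat\lambda_i,
\end{align*}
and the right-hand side is by definition $I_{\widehat T}(v\circ F_K)$. Here $v\circ F_K$ is continuous because $F_K$ is a homeomorphism, so nodal values make sense.

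For \eqref{eq:interpolant-commuting-chordal}, we argue in the same way on $T$. Since $\lambda_i^K=\lambda_i^T\circ G_T$ and $\Psi_T(p_i)=p_i$, we have $v(p_i)=w(p_i)$ with $w=v\circ\Psi_T$. Therefore
\begin{align*}
I_Kv=\Bigl(\sum_i w(p_i)\lambda_i^T\Bigr)\circ G_T=(I_Tw)\circ G_T.
\end{align*}
This uses that $I_T$ is the nodal interpolant with basis $\lambda_i^T$, which are the barycentric functions of $T$ because $\Phi_T$ is affine. Composing with $\Psi_T$ and using $G_T\circ\Psi_T=\mathrm{id}_T$ from Lemma~\ref{lem:radial-basic} then gives $(I_Kv)\circ\Psi_T=I_Tw$, and subtracting this from $v\circ\Psi_T=w$ yields \eqref{eq:interpolation-error-pullback}.

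For unisolvence, we show that the linear map $\mathbb P_F^1(K)\to\R^3$, $u\mapsto(u(p_i))_i$, is an isomorphism. The space $\mathbb P_F^1(K)$ is the image of $\mathbb P^1(\widehat T)$ under the injective linear map $\hat p\mapsto\hat p\circ F_K^{-1}$, so it has dimension three, with basis $\{\lambda_i^K\}$. By \eqref{eq:spherical-barycentric-properties} the evaluation map sends $\lambda_i^K$ to the $i$-th unit vector, so it is an isomorphism. Finally, any $v=\sum c_i\lambda_i^K\in\mathbb P_F^1(K)$ has $v(p_i)=c_i$, and therefore $I_Kv=v$.

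No step poses a real obstacle. The only care needed is the bookkeeping of $\Psi_T(p_i)=p_i$ and $G_T\circ\Psi_T=\mathrm{id}_T$, which Lemma~\ref{lem:radial-basic} supplies.
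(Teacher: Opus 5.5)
Your proof is correct and follows the paper's argument for the three identities essentially verbatim. The only difference is in the unisolvence step: you count dimensions and use the Kronecker property of the $\lambda_i^K$, whereas the paper pulls a vertex-vanishing $q\in\mathbb P_F^1(K)$ back to $\widehat T$ and then applies unisolvence to $v-I_Kv$; both routes are equally valid.
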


\begin{proof}
Because $\lambda_i^K\circ F_K=\hat\lambda_i$ and $F_K(\hat p_i)=p_i$, the reference-element expansion gives
\begin{align*}
  (I_Kv)\circ F_K
  &=
  \sum_{i=1}^3v(p_i)(\lambda_i^K\circ F_K)
  =
  \sum_{i=1}^3(v\circ F_K)(\hat p_i)\hat\lambda_i
  =
  I_{\widehat T}(v\circ F_K).
\end{align*}
Similarly, $\lambda_i^K=\lambda_i^T\circ G_T$ and $\Psi_T(p_i)=p_i$ imply
\begin{align*}
  \bigl(I_T(v\circ\Psi_T)\bigr)\circ G_T
  &=
  \sum_{i=1}^3v(\Psi_T(p_i))(\lambda_i^T\circ G_T)
  =
  \sum_{i=1}^3v(p_i)\lambda_i^K
  =
  I_Kv.
\end{align*}
Composing the second identity with $\Psi_T$ and using $G_T\circ\Psi_T=\operatorname{id}_T$ gives \eqref{eq:interpolation-error-pullback}. 

If $q\in\mathbb P_F^1(K)$ vanishes at the three vertices, then $q\circ F_K\in\mathbb P^1(\widehat T)$ vanishes at the three reference vertices and is therefore zero. Because $F_K$ is bijective, it follows that $q=0$ on $K$. This proves unisolvence. Finally, if $v\in\mathbb P_F^1(K)$, then $v-I_Kv\in\mathbb P_F^1(K)$ and vanishes at the three vertices. Unisolvence therefore gives $I_Kv=v$.
\end{proof}

\begin{remark}[Relation to the standard surface finite-element lift]
  For $y\neq0$, $Ry/|y|$ is the unique closest point to $y$ on $\Sph_R^2$. Therefore, $\Psi_T$ is the closest-point projection restricted to the chordal triangle, and
  \begin{align*}
    \mathbb P_F^1(K)
    =
    \{p_T\circ G_T:p_T\in\mathbb P^1(T)\}
  \end{align*}
  is the usual closest-point lift of the affine $\mathbb P^1$ space on $T$; compare \cite{DziukElliott2013}. The estimates below quantify its dependence on the directional scales, semi-regularity, and radial distortion. If the variational problem and its integrals are posed on the exact sphere, no polyhedral-surface consistency term arises; quadrature error is a separate issue.
\end{remark}

\subsection{Sharp differential identities for the radial correction}
The distortion bounds in Section~\ref{sec:geometry} are sufficient for regularity and norm comparison, but the interpolation analysis requires a sharper second-order identity whose curvature contribution vanishes in the local flat limit.

Let $v\in\mathcal C^2(K)$ and set $w=v\circ\Psi_T$.  For $y\in T$, we define
\begin{align}
  x:=\Psi_T(y),
  \quad
  z:=y-c_T,
  \quad
  r:=|y|.
  \label{eq:radial-local-variables}
\end{align}
Because $y\in\aff (T)=c_T+V_T$, one has $z\in V_T$ and $y=c_T+z$.  Furthermore, \eqref{eq:support-plane-distance-bound} gives
\begin{align}
  |z|
  \leq
  \frac{H_T}{4}.
  \label{eq:radial-offset-bound}
\end{align}
For $\xi\in V_T$, we define
\begin{align*}
  \tau_\xi(y):=D\Psi_T(y)\xi,
  \quad
  a_\xi(y):=|\tau_\xi(y)|.
\end{align*}
If $\xi\neq0$, then $a_\xi(y)>0$, and we define
\begin{align*}
  e_\xi(x):=\frac{\tau_\xi(y)}{a_\xi(y)}.
\end{align*}
By \eqref{eq:radial-metric-bounds},
\begin{align*}
  a_\xi(y)
  =
  |D\Psi_T(y)\xi|
  \geq
  \chi_T^{-1}|\xi|
  >0,
\end{align*}
so $e_\xi(x)$ is well defined. Furthermore, $\tau_\xi(y)\in T_x\Sph_R^2$, and hence $e_\xi(x)$ is a unit tangent vector at $x$. Because $\Psi_T:T\to K$ is bijective, $y=G_T(x)$ is uniquely determined by $x$, so $e_\xi$ is well defined on $K$. We write
\begin{align*}
  \partial_{e_\xi}^{\Sph}v(x)
  :=
  \nabla_{\Sph}v(x)\cdot e_\xi(x).
\end{align*}

\begin{lemma}[Tangential second derivative of the radial map]
  \label{lem:tangential-second-radial}
  Let $y\in T$, and let $x,z,r$ be defined by \eqref{eq:radial-local-variables}. Recall from Section~\ref{subsec:spherical-calculus} that
\begin{align*}
  P(x)=I-n(x)n(x)^{\top}.
\end{align*}
 Then, for any $\xi,\eta\in V_T$,
  \begin{align}
    P(x)D^2\Psi_T(y)[\xi,\eta]
    =
    -\frac{1}{r^2}
    \left(
      (z\cdot\eta)\tau_\xi(y)
      +(z\cdot\xi)\tau_\eta(y)
    \right).
    \label{eq:tangential-second-derivative-identity}
  \end{align}
  Consequently,
  \begin{align}
    |P(x)D^2\Psi_T(y)[\xi,\eta]|
    \leq
    \frac{H_T\chi_T^3}{2R^2}
    |\xi|\,|\eta|.
    \label{eq:tangential-second-derivative-bound}
  \end{align}
\end{lemma}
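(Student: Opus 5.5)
The plan is to compute directly from the explicit second-derivative formula \eqref{eq:radial-second-derivative}, project it onto the tangent space, and then use $\xi,\eta\in V_T$ to simplify the inner products with $y$.

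\emph{Step 1: project the second derivative.} Since $P(x)y=P_yy=0$, the terms in \eqref{eq:radial-second-derivative} proportional to $y$ vanish after applying $P(x)$. Only the first two terms survive:
\begin{align*}
  P(x)D^2\Psi_T(y)[\xi,\eta]
  =-\frac{R}{r^3}\bigl((y\cdot\eta)P_y\xi+(y\cdot\xi)P_y\eta\bigr).
\end{align*}

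\emph{Step 2: rewrite in terms of $z$ and $\tau$.} Write $y=c_T+z$ with $c_T=d_T\nu_T\perp V_T$. For $\xi\in V_T$ this gives $y\cdot\xi=z\cdot\xi$, and likewise for $\eta$. By \eqref{eq:radial-first-derivative}, $P_y\xi=(r/R)\tau_\xi(y)$. Substituting, the prefactor becomes $-(R/r^3)(r/R)=-1/r^2$, which is exactly \eqref{eq:tangential-second-derivative-identity}.

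\emph{Step 3: the bound.} Apply the triangle inequality to the identity. Use $|z\cdot\eta|\le|z|\,|\eta|$ together with $|z|\le H_T/4$ from \eqref{eq:radial-offset-bound}, and $|\tau_\xi(y)|\le\chi_T|\xi|$ from \eqref{eq:radial-metric-bounds}. Finally, $r\ge d_T$ gives $1/r^2\le 1/d_T^2=\chi_T^2/R^2$. Combining,
\begin{align*}
  |P(x)D^2\Psi_T(y)[\xi,\eta]|
  \le\frac{\chi_T^2}{R^2}\cdot 2\cdot\frac{H_T}{4}\,\chi_T\,|\xi|\,|\eta|
  =\frac{H_T\chi_T^3}{2R^2}\,|\xi|\,|\eta|,
\end{align*}
which is \eqref{eq:tangential-second-derivative-bound}.

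None of these steps is hard. The only point needing care is the cancellation of the normal terms in Step 1, which depends on confirming that $P(x)=P_y$ annihilates $y$. Given that, the sharp identity simply reflects that the in-plane offset $z$, not $y$ itself, drives the tangential curvature term.
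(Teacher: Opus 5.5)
Your proposal is correct and follows the paper's proof essentially step by step: you project the explicit second-derivative formula with $P(x)=P_y$ to remove the terms parallel to $y$, replace $y\cdot\xi$ by $z\cdot\xi$, and substitute $P_y\xi=(r/R)\tau_\xi(y)$. The only cosmetic difference is in the final bound. You use $|\tau_\xi(y)|\le\chi_T|\xi|$ and $r^{-2}\le\chi_T^2/R^2$ separately. The paper first writes the sharper pointwise bound $2R|z|\,r^{-3}|\xi|\,|\eta|$ and then inserts $r\ge R/\chi_T$. Both routes give the same constant $H_T\chi_T^3/(2R^2)$.
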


\begin{proof}
Because $x=\Psi_T(y)=Ry/r$, one has
\begin{align*}
  n(x)=\frac{y}{r},
 \end{align*}
and hence
\begin{align*}
  P(x)=I-\frac{yy^{\top}}{r^2}=P_y.
 \end{align*}
  Applying $P_y$ to \eqref{eq:radial-second-derivative} and using $P_yy=0$ gives
  \begin{align*}
    P(x)D^2\Psi_T(y)[\xi,\eta]
    =
    -\frac{R}{r^3}
    \left(
      (y\cdot\eta)P_y\xi
      +(y\cdot\xi)P_y\eta
    \right).
  \end{align*}
  Because $y=c_T+z$, $c_T=d_T\nu_T$, and $\xi,\eta\in V_T=\nu_T^\perp$,
  \begin{align*}
    y\cdot\xi=z\cdot\xi,
    \quad
    y\cdot\eta=z\cdot\eta.
  \end{align*}
  Furthermore,
  \begin{align*}
    \tau_\xi(y)
    =
    D\Psi_T(y)\xi
    =
    \frac{R}{r}P_y\xi,
  \end{align*}
  and similarly for $\tau_\eta(y)$. Therefore, \eqref{eq:tangential-second-derivative-identity} follows.

  Taking norms gives
  \begin{align*}
    |P(x)D^2\Psi_T(y)[\xi,\eta]|
    \leq
    \frac{|z|}{r^2}
    \left(
      |\eta|\,|\tau_\xi(y)|
      +
      |\xi|\,|\tau_\eta(y)|
    \right)
    \leq
    \frac{2R|z|}{r^3}
    |\xi|\,|\eta|.
  \end{align*}
  From \eqref{eq:radial-offset-bound},
  \begin{align*}
    |z|\leq\frac{H_T}{4},
  \end{align*}
  while
  \begin{align*}
    r\geq d_T=\frac{R}{\chi_T}.
  \end{align*}
  Therefore,
  \begin{align*}
    |P(x)D^2\Psi_T(y)[\xi,\eta]|
    \leq
    \frac{H_T\chi_T^3}{2R^2}
    |\xi|\,|\eta|,
  \end{align*}
  which proves \eqref{eq:tangential-second-derivative-bound}.
\end{proof}

\begin{lemma}[Exact first and second directional derivatives]
  \label{lem:exact-directional-derivatives}
  Let $v\in\mathcal C^2(K)$ and set $w=v\circ\Psi_T$. Let $y\in T$, and let $x,z,r$ be defined by \eqref{eq:radial-local-variables}. Let $\xi,\eta\in V_T\setminus\{0\}$. Then,
  \begin{align}
    \partial_\xi w(y)
    =
    a_\xi(y)\partial_{e_\xi}^{\Sph}v(x),
    \label{eq:radial-first-directional-derivative}
  \end{align}
  and
  \begin{align}
    D_T^2w(y)[\xi,\eta]
    &=
    a_\xi(y)a_\eta(y)
    \nabla_{\Sph}^2v(x)[e_\xi(x),e_\eta(x)]
    \nonumber \\
    &\quad -\frac{1}{r^2}
    \left(
      (z\cdot\eta)a_\xi(y)\partial_{e_\xi}^{\Sph}v(x)
      +(z\cdot\xi)a_\eta(y)\partial_{e_\eta}^{\Sph}v(x)
    \right).
    \label{eq:radial-second-directional-derivative}
  \end{align}
  In particular,
  \begin{align}
    |D_T^2w(y)[\xi,\eta]|
    \leq
    \left(
      \chi_T^2|\nabla_{\Sph}^2v(x)|
      +
      \frac{H_T\chi_T^3}{2R^2}|\nabla_{\Sph}v(x)|
    \right)
    |\xi|\,|\eta|.
    \label{eq:pullback-hessian-bound}
  \end{align}
\end{lemma}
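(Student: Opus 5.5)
The plan is to differentiate $w=v^{\mathrm e}\circ\Psi$ along constant in-plane directions by the Euclidean chain rule, using a smooth extension $v^{\mathrm e}$ of $v$ to a neighbourhood of $\Sph_R^2$. The natural choice is the $0$-homogeneous extension $v^{\mathrm e}(y):=v(Ry/|y|)$, for which $\nabla v^{\mathrm e}$ is tangential on the sphere; since the definitions in Section~\ref{subsec:spherical-calculus} are extension-independent, this costs nothing.

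\textbf{First derivative.} For $\xi\in V_T$, the chain rule gives
\begin{align*}
  \partial_\xi w(y)=\nabla v^{\mathrm e}(x)\cdot D\Psi_T(y)\xi .
\end{align*}
Since $\tau_\xi(y)=D\Psi_T(y)\xi$ is tangent at $x$ by \eqref{eq:radial-first-derivative}, we may insert $P(x)$ and obtain
\begin{align*}
  \partial_\xi w(y)=\nabla_{\Sph}v(x)\cdot\tau_\xi(y)=a_\xi(y)\,\partial^{\Sph}_{e_\xi}v(x),
\end{align*}
which is \eqref{eq:radial-first-directional-derivative}.

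\textbf{Second derivative.} Write $g^{\mathrm e}$ for an extension of $g=\nabla_{\Sph}v$, so that
\begin{align*}
  \partial_\eta w(y')=g^{\mathrm e}(\Psi(y'))\cdot D\Psi(y')\eta
\end{align*}
for $y'$ near $T$. This holds on the sphere because $D\Psi(y')\eta$ is tangent, and that suffices since $\Psi(y')\in\Sph_R^2$. Differentiating once more in direction $\xi$ gives two terms:
\begin{align*}
  D_T^2w(y)[\xi,\eta]
  =\tau_\eta\cdot Dg^{\mathrm e}(x)\tau_\xi+\nabla_{\Sph}v(x)\cdot D^2\Psi(y)[\xi,\eta].
\end{align*}
Both $\tau_\xi,\tau_\eta$ are tangent, so the first term equals $\tau_\eta\cdot P(x)Dg^{\mathrm e}(x)\tau_\xi=\nabla_{\Sph}^2v(x)[\tau_\eta,\tau_\xi]$. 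By symmetry of the covariant Hessian and bilinearity, this becomes $a_\xi a_\eta\nabla^2_{\Sph}v[e_\xi,e_\eta]$. In the second term, $\nabla_{\Sph}v(x)$ is tangent, so we may replace $D^2\Psi$ by $P(x)D^2\Psi$. Lemma~\ref{lem:tangential-second-radial} then gives $-r^{-2}\bigl((z\cdot\eta)\tau_\xi+(z\cdot\xi)\tau_\eta\bigr)$, and dotting with $\nabla_{\Sph}v$ turns this into the correction in \eqref{eq:radial-second-directional-derivative} via the first identity. The one point needing care is the symmetry of $\nabla_{\Sph}^2v$ as defined in Section~\ref{subsec:spherical-calculus}. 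We take it as stated there, since the paper calls it a symmetric bilinear form; the order of $\xi,\eta$ is in any case irrelevant since the left side $D_T^2w$ is symmetric.

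\textbf{The bound.} By \eqref{eq:radial-metric-bounds}, $a_\xi\le\chi_T|\xi|$ and $a_\eta\le\chi_T|\eta|$. For unit vectors $e,f$, Cauchy--Schwarz in the Frobenius norm gives $|\nabla^2_{\Sph}v[e,f]|\le|\nabla^2_{\Sph}v|$, so the Hessian term is at most $\chi_T^2|\nabla^2_{\Sph}v(x)|\,|\xi|\,|\eta|$. The correction term equals $\nabla_{\Sph}v\cdot P(x)D^2\Psi_T[\xi,\eta]$, so \eqref{eq:tangential-second-derivative-bound} bounds it by $\frac{H_T\chi_T^3}{2R^2}|\nabla_{\Sph}v(x)|\,|\xi|\,|\eta|$. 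Adding the two bounds gives \eqref{eq:pullback-hessian-bound}.

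The main obstacle is modest: justifying that the extension-based chain rule reproduces the intrinsic covariant Hessian, which requires the tangency of both arguments $\tau_\xi,\tau_\eta$. Regularity up to the boundary of $T$ is handled because $\Psi$ is smooth on a neighbourhood of $T$ and $v\in\mathcal C^2(K)$, so $v$ admits a $\mathcal C^2$ extension to a neighbourhood of $K$ in the sphere.
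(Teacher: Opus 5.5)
Your proposal is correct and follows essentially the same route as the paper. Both use the chain rule through a smooth extension, insert the tangential projection, apply Lemma~\ref{lem:tangential-second-radial} to the correction term, and bound the result with \eqref{eq:radial-metric-bounds} and \eqref{eq:tangential-second-derivative-bound}. The only difference is minor. The paper uses the $0$-homogeneous extension to identify $\nabla_{\Sph}^2v$ with $D^2v^{\mathrm e}$ on tangent vectors and then applies the second-order chain rule to $v^{\mathrm e}\circ\Psi_T$. You instead differentiate the first-order identity $g^{\mathrm e}(\Psi(y'))\cdot D\Psi(y')\eta$ and read off the covariant Hessian directly from its definition. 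Your remark on symmetry is also valid: since $D_T^2w$ and the correction term are both symmetric in $(\xi,\eta)$, and $D\Psi(y)|_{V_T}$ is onto the tangent plane, the Hessian term must be symmetric too.
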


\begin{proof}
The first identity is the chain rule $\partial_\xi w=\nabla_{\Sph}v\cdot D\Psi_T\xi$. To justify the second identity, work locally in the interior of $K$ and take the $0$-homogeneous extension
\begin{align*}
  v^{\mathrm e}(x):=v(Rx/|x|)
\end{align*}
on the cone over a spherical neighbourhood contained in the interior of $K$. Because $x\cdot\nabla v^{\mathrm e}(x)=0$, its gradient is tangential to the concentric spheres and agrees with $\nabla_{\Sph}v$ on that neighbourhood. Differentiating along tangent vectors and projecting onto the tangent plane gives
\begin{align*}
  \nabla_{\Sph}^2v(x)[\tau,\eta]
  =D^2v^{\mathrm e}(x)[\tau,\eta],
  \quad \tau,\eta\in T_x\Sph_R^2.
\end{align*}
The Euclidean chain rule for $w=v^{\mathrm e}\circ\Psi_T$ therefore gives
\begin{align*}
  D_T^2w[\xi,\eta]
  =\nabla_{\Sph}^2v[\tau_\xi,\tau_\eta]
   +\nabla_{\Sph}v\cdot D^2\Psi_T[\xi,\eta].
\end{align*}
Because $\nabla_{\Sph}v$ is tangential,
\begin{align*}
  \nabla_{\Sph}v\cdot D^2\Psi_T[\xi,\eta]
  =
  \nabla_{\Sph}v\cdot P(x)D^2\Psi_T[\xi,\eta],
\end{align*}
so Lemma~\ref{lem:tangential-second-radial} applies to the second term. Substituting $\tau_\xi=a_\xi e_\xi$ and $\tau_\eta=a_\eta e_\eta$ yields \eqref{eq:radial-second-directional-derivative}. The bounds
\begin{align*}
  |\nabla_{\Sph}^2v[e_\xi,e_\eta]|
  &\leq|\nabla_{\Sph}^2v|,
  \quad
  |\tau_\zeta(y)|
  \leq\chi_T|\zeta|,
  \quad \zeta\in V_T,
\end{align*}
together with \eqref{eq:tangential-second-derivative-bound}, give \eqref{eq:pullback-hessian-bound}.
\end{proof}

The identities in Lemma~\ref{lem:exact-directional-derivatives} were written first for $v\in\mathcal C^2(K)$. They also have the Sobolev interpretation used below. Indeed, $\Psi_T$ and $G_T$ are the restrictions of smooth maps defined on neighbourhoods of the closed elements, and all their derivatives up to order two are bounded because $d_T>0$. Therefore, pullback by $\Psi_T$ maps $H^m(K)$ continuously into $H^m(T)$ for $m=1,2$. Smooth functions are dense in these intrinsic Sobolev spaces, so the first identity extends almost everywhere to $v\in H^1(K)$ and the second identity extends, in the weak-derivative sense, to $v\in H^2(K)$. This density argument also justifies all directional transfer identities below at their stated Sobolev regularity.

The improvement over the earlier $O(R^{-1})$ bound is essential. Under a uniform spherical-locality bound, the tangential component of $D^2\Psi_T$ is $O(H_T/R^2)$. Therefore, for the fixed sphere radius $R$, it vanishes in the local flat limit $H_T/R\to0$, while the leading second-order term retains the intrinsic covariant Hessian of $v$.

\subsection{Exact energy splitting and directional-derivative transfer}
We now convert the pointwise identities of the preceding subsection into exact $L^2$ and $H^1$ transfer formulas, first for the energy and then for the directional derivatives used in the interpolation estimates.

For a scalar function $u$ on $K$, set $u_T:=u\circ\Psi_T$. We use
\begin{align*}
  |u|_{H^1(K)}:=\|\nabla_{\Sph}u\|_{L^2(K)},
  \quad
  |u_T|_{H^1(T)}:=\|\nabla_Tu_T\|_{L^2(T)}.
\end{align*}

\begin{lemma}[Local norm transfer and exact energy splitting]
  \label{lem:sharp-norm-transfer}
  Let $u\in H^1(K)$ and set
  $u_T:=u\circ\Psi_T\in H^1(T)$. Then,
  \begin{align}
    \chi_T^{-1/2}\|u_T\|_{L^2(T)}
    \leq
    \|u\|_{L^2(K)}
    \leq
    \chi_T\|u_T\|_{L^2(T)},
    \label{eq:L2-norm-transfer}
  \end{align}
  and
  \begin{align}
    \chi_T^{-1/2}|u_T|_{H^1(T)}
    \leq
    |u|_{H^1(K)}
    \leq
    \chi_T^{1/2}|u_T|_{H^1(T)}.
    \label{eq:H1-seminorm-transfer}
  \end{align}
  More precisely, for almost every $y\in T$, we set
  \begin{align*}
    x:=\Psi_T(y),
    \quad
    r:=|y|,
    \quad
    z:=y-c_T,
    \quad
    g:=\nabla_Tu_T(y).
  \end{align*}
  If $z\neq0$, we define
  \begin{align*}
    e_{\mathrm{rad}}(y)
    :=
    \frac{z}{|z|}
  \end{align*}
  and we choose a unit vector
  $e_{\mathrm{tan}}(y)\in V_T$ orthogonal to
  $e_{\mathrm{rad}}(y)$. Then,
  \begin{align}
    J_T(y)|\nabla_{\Sph}u(x)|^2
    =
    \frac{r}{d_T}
    |g\cdot e_{\mathrm{rad}}(y)|^2
    +
    \frac{d_T}{r}
    |g\cdot e_{\mathrm{tan}}(y)|^2.
    \label{eq:pointwise-energy-splitting}
  \end{align}
  If $z=0$, the same identity holds after replacing
  $e_{\mathrm{rad}},e_{\mathrm{tan}}$ by any orthonormal basis of
  $V_T$.
\end{lemma}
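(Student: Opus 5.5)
The plan is to reduce both norm transfers to the change-of-variables formula for $\Psi_T:T\to K$ and to prove the pointwise splitting \eqref{eq:pointwise-energy-splitting} from the singular value decomposition of $D\Psi_T(y)|_{V_T}$ in the proof of Lemma~\ref{lem:metric-distortion}. The $L^2$ bounds come first. Lemma~\ref{lem:metric-distortion} gives
\begin{align*}
  \|u\|_{L^2(K)}^2=\int_T|u_T|^2J_T\,\diff s,
  \quad
  J_T(y)=\frac{R^2d_T}{r^3}.
\end{align*}
Since $d_T\le r\le R$, we have $J_T\ge d_T/R=\chi_T^{-1}$ and $J_T\le R^2/d_T^2=\chi_T^2$. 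Taking square roots yields \eqref{eq:L2-norm-transfer}.

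For the energy identity, we fix $y$ with $z\neq0$ and use the orthonormal basis $e_{\mathrm{rad}},e_{\mathrm{tan}}$ of $V_T$. The proof of Lemma~\ref{lem:metric-distortion} shows that $A:=D\Psi_T(y)|_{V_T}$ maps $e_{\mathrm{rad}}$ and $e_{\mathrm{tan}}$ to orthogonal tangent vectors of lengths
\begin{align*}
  \sigma_{\mathrm{rad}}=\frac{Rd_T}{r^2},
  \quad
  \sigma_{\mathrm{tan}}=\frac{R}{r}.
\end{align*}
Hence $f_{\mathrm{rad}}:=Ae_{\mathrm{rad}}/\sigma_{\mathrm{rad}}$ and $f_{\mathrm{tan}}:=Ae_{\mathrm{tan}}/\sigma_{\mathrm{tan}}$ form an orthonormal basis of $T_x\Sph_R^2$. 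The chain rule \eqref{eq:radial-first-directional-derivative} gives
\begin{align*}
  g\cdot e_{\mathrm{rad}}=\sigma_{\mathrm{rad}}\,\nabla_{\Sph}u(x)\cdot f_{\mathrm{rad}},
  \quad
  g\cdot e_{\mathrm{tan}}=\sigma_{\mathrm{tan}}\,\nabla_{\Sph}u(x)\cdot f_{\mathrm{tan}},
\end{align*}
and therefore
\begin{align*}
  |\nabla_{\Sph}u(x)|^2=\sigma_{\mathrm{rad}}^{-2}|g\cdot e_{\mathrm{rad}}|^2+\sigma_{\mathrm{tan}}^{-2}|g\cdot e_{\mathrm{tan}}|^2.
\end{align*}
We then multiply by $J_T=\sigma_{\mathrm{rad}}\sigma_{\mathrm{tan}}$. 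This gives the coefficients $\sigma_{\mathrm{tan}}/\sigma_{\mathrm{rad}}=r/d_T$ and $\sigma_{\mathrm{rad}}/\sigma_{\mathrm{tan}}=d_T/r$, which is exactly \eqref{eq:pointwise-energy-splitting}. When $z=0$ we have $r=d_T$, $A=(R/d_T)I$ up to the identification $V_T=T_x\Sph_R^2$, and both coefficients equal $1$ for any orthonormal basis. The identity holds first for smooth $u$ and extends to $H^1(K)$ almost everywhere by the density remark following Lemma~\ref{lem:exact-directional-derivatives}.

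The seminorm bounds follow by integration. Since $1\le r/d_T\le R/d_T=\chi_T$ and $\chi_T^{-1}\le d_T/r\le1$, the right-hand side of \eqref{eq:pointwise-energy-splitting} lies between $\chi_T^{-1}|g|^2$ and $\chi_T|g|^2$. Integrating over $T$ and using $\int_T J_T|\nabla_{\Sph}u\circ\Psi_T|^2\,\diff s=|u|_{H^1(K)}^2$ gives
\begin{align*}
  \chi_T^{-1}|u_T|_{H^1(T)}^2\le|u|_{H^1(K)}^2\le\chi_T|u_T|_{H^1(T)}^2.
\end{align*}
Taking square roots gives \eqref{eq:H1-seminorm-transfer}. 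The only delicate point is the orthogonality of the images of $e_{\mathrm{rad}}$ and $e_{\mathrm{tan}}$, which the SVD argument of Lemma~\ref{lem:metric-distortion} already supplies. The constant one-half exponent comes from the cancellation of one factor of the Jacobian against the metric, rather than from the crude bounds $\chi_T^{\pm1}$ on each singular value.
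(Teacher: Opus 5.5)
Your proposal is correct and follows the paper's proof essentially step for step. It uses the area formula with $\chi_T^{-1}\le J_T\le\chi_T^2$ for the $L^2$ transfer, and the right/left singular bases of $D\Psi_T(y)|_{V_T}$ together with $J_T=s_{\mathrm{rad}}s_{\mathrm{tan}}$ for the pointwise splitting. The seminorm transfer then follows by integrating the coefficient bounds $\chi_T^{-1}\le d_T/r\le 1\le r/d_T\le\chi_T$. (One cosmetic point: the identity $\|u\|_{L^2(K)}^2=\int_T J_T|u_T|^2\,\diff s$ is the area formula; Lemma~\ref{lem:metric-distortion} supplies only the formula and bounds for $J_T$.)
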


\begin{proof}
The area formula gives
\begin{align*}
  \|u\|_{L^2(K)}^2
  =
  \int_TJ_T(y)|u_T(y)|^2\,\diff s(y),
\end{align*}
so \eqref{eq:radial-jacobian-bounds} yields \eqref{eq:L2-norm-transfer}. For the energy identity, we set
\begin{align*}
  A_y:=D\Psi_T(y)|_{V_T},
  \quad
  q:=\nabla_{\Sph}u(x),
  \quad
  g:=\nabla_Tu_T(y)=A_y^{\top}q.
\end{align*}
Suppose first that $z\neq0$. In the orthonormal right-singular basis $(e_{\mathrm{rad}},e_{\mathrm{tan}})$ from the proof of Lemma~\ref{lem:metric-distortion}, the corresponding singular values are
\begin{align*}
  s_{\mathrm{rad}}=\frac{Rd_T}{r^2},
  \quad
  s_{\mathrm{tan}}=\frac{R}{r}.
\end{align*}
Let $(f_{\mathrm{rad}},f_{\mathrm{tan}})$ be the corresponding orthonormal left-singular basis, so that
\begin{align*}
  A_ye_{\mathrm{rad}}
  &=s_{\mathrm{rad}}f_{\mathrm{rad}},
  \quad
  A_ye_{\mathrm{tan}}
  =s_{\mathrm{tan}}f_{\mathrm{tan}}.
\end{align*}
Because $g=A_y^{\top}q$,
\begin{align*}
  g\cdot e_{\mathrm{rad}}
  &=
  s_{\mathrm{rad}}\,q\cdot f_{\mathrm{rad}},
  \quad
  g\cdot e_{\mathrm{tan}}
  =
  s_{\mathrm{tan}}\,q\cdot f_{\mathrm{tan}}.
\end{align*}
Using $J_T=s_{\mathrm{rad}}s_{\mathrm{tan}}=R^2d_T/r^3$, we obtain
\begin{align*}
  J_T|q|^2
  &=
  \frac{J_T}{s_{\mathrm{rad}}^2}
  |g\cdot e_{\mathrm{rad}}|^2
  +
  \frac{J_T}{s_{\mathrm{tan}}^2}
  |g\cdot e_{\mathrm{tan}}|^2
  =
  \frac{r}{d_T}|g\cdot e_{\mathrm{rad}}|^2
  +
  \frac{d_T}{r}|g\cdot e_{\mathrm{tan}}|^2,
\end{align*}
which is \eqref{eq:pointwise-energy-splitting}. Because
\begin{align*}
  \chi_T^{-1}\le \frac{d_T}{r}\le1\le\frac r{d_T}\le\chi_T,
\end{align*}
integration proves \eqref{eq:H1-seminorm-transfer}. If $z=0$, then $r=d_T$ and both singular values are equal to $R/d_T$. Therefore, \eqref{eq:pointwise-energy-splitting} holds with any orthonormal basis of $V_T$.
\end{proof}

The factors in \eqref{eq:H1-seminorm-transfer} follow from the exact energy-density eigenvalues; they are not obtained by separately combining the extremal singular values of $D\Psi_T$ and the Jacobian.

\begin{lemma}[First directional derivative transfer]
  \label{lem:first-direction-transfer}
  Let $v\in H^1(K)$, set $w=v\circ\Psi_T$, and let $\xi\in V_T$ be a unit vector. For $x\in K$, let $y:=G_T(x)$ and define
\begin{align}
  \omega_\xi(x)
  :=
  \frac{a_\xi(y)^2}{J_T(y)}.
  \label{eq:first-direction-weight}
\end{align}
Then,
  \begin{align}
    \|\partial_\xi w\|_{L^2(T)}^2
    =\int_K\omega_\xi(x)
      |\partial_{e_\xi}^{\Sph}v(x)|^2\,\diff\sigma(x),
    \label{eq:first-direction-transfer-identity}
  \end{align}
  and
  \begin{align}
    \chi_T^{-1}\leq\omega_\xi(x)\leq\chi_T.
    \label{eq:first-direction-weight-bounds}
  \end{align}
  Consequently,
  \begin{align}
    \chi_T^{-1/2}\|\partial_{e_\xi}^{\Sph}v\|_{L^2(K)}
    \leq\|\partial_\xi w\|_{L^2(T)}
    \leq\chi_T^{1/2}\|\partial_{e_\xi}^{\Sph}v\|_{L^2(K)}.
    \label{eq:first-direction-L2-transfer}
  \end{align}
\end{lemma}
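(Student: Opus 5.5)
The identity \eqref{eq:first-direction-transfer-identity} will come from combining the pointwise chain rule with the area formula. By \eqref{eq:radial-first-directional-derivative} from Lemma~\ref{lem:exact-directional-derivatives}, extended to $v\in H^1(K)$ by the density argument noted after that lemma, we have $\partial_\xi w(y)=a_\xi(y)\,\partial_{e_\xi}^{\Sph}v(\Psi_T(y))$ for almost every $y\in T$. Squaring and integrating over $T$ gives $\|\partial_\xi w\|_{L^2(T)}^2=\int_T a_\xi(y)^2|\partial^{\Sph}_{e_\xi}v(\Psi_T(y))|^2\diff s(y)$. Next I insert $1=J_T(y)/J_T(y)$. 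By Lemma~\ref{lem:metric-distortion}, $J_T>0$ on $T$, so this is legitimate. The area formula for the bijection $\Psi_T:T\to K$ with Jacobian $J_T$ (as in the proof of Lemma~\ref{lem:sharp-norm-transfer}) then turns the integral into $\int_K (a_\xi^2/J_T)\circ G_T\,|\partial^{\Sph}_{e_\xi}v|^2\diff\sigma$. This is exactly \eqref{eq:first-direction-transfer-identity} with the weight \eqref{eq:first-direction-weight}.

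The main step is the two-sided weight bound \eqref{eq:first-direction-weight-bounds}. Crude bounds do not suffice here: combining $a_\xi\le\chi_T$ with $J_T\ge\chi_T^{-1}$ would only give $\chi_T^3$. I will therefore use the singular-value structure from the proof of Lemma~\ref{lem:metric-distortion}. Write $r=|y|$. In the orthonormal basis $(e_{\mathrm{rad}},e_{\mathrm{tan}})$ of $V_T$, decompose the unit vector as $\xi=\alpha e_{\mathrm{rad}}+\beta e_{\mathrm{tan}}$ with $\alpha^2+\beta^2=1$. The images of these basis vectors are orthogonal, with singular values $s_{\mathrm{rad}}=Rd_T/r^2$ and $s_{\mathrm{tan}}=R/r$. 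Hence
\begin{align*}
  a_\xi^2=\alpha^2\frac{R^2d_T^2}{r^4}+\beta^2\frac{R^2}{r^2},
  \quad
  \omega_\xi=\frac{a_\xi^2}{J_T}=\alpha^2\frac{d_T}{r}+\beta^2\frac{r}{d_T},
\end{align*}
using $J_T=R^2d_T/r^3$. In the case $z=0$, both singular values equal $R/d_T$ and $\omega_\xi=1$.

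So $\omega_\xi$ is a convex combination of $d_T/r$ and $r/d_T$. By $d_T\le r\le R$ these lie in $[\chi_T^{-1},1]$ and $[1,\chi_T]$ respectively, so $\omega_\xi\in[\chi_T^{-1},\chi_T]$. This is \eqref{eq:first-direction-weight-bounds}. It is the same eigenvalue structure that appears in \eqref{eq:pointwise-energy-splitting}.

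Finally, \eqref{eq:first-direction-L2-transfer} follows by inserting these weight bounds into the identity and taking square roots.
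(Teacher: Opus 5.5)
Your proposal is correct and follows the paper's proof: the chain rule combined with the area formula gives the identity, and an exact expression for $\omega_\xi$ gives both weight bounds. The difference is only cosmetic. The paper writes $\omega_\xi=(r/d_T)\bigl(1-(z\cdot\xi)^2/r^2\bigr)$ and obtains the lower bound from $(z\cdot\xi)^2\le|z|^2=r^2-d_T^2$, which is algebraically identical to your convex combination $\alpha^2 d_T/r+\beta^2 r/d_T$ with $\alpha=\xi\cdot z/|z|$.
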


\begin{proof}
Equation \eqref{eq:radial-first-directional-derivative} and the area formula
give
\begin{align*}
  \|\partial_\xi w\|_{L^2(T)}^2
  &=
  \int_Ta_\xi(y)^2
  |\partial_{e_\xi}^{\Sph}v(\Psi_T(y))|^2\,\diff s(y)
  =
  \int_K\frac{a_\xi(G_T(x))^2}{J_T(G_T(x))}
  |\partial_{e_\xi}^{\Sph}v(x)|^2\,\diff\sigma(x),
\end{align*}
which is \eqref{eq:first-direction-transfer-identity}. For $|\xi|=1$, $y=d_T\nu_T+z$ implies
\begin{align*}
  \omega_\xi
  =\frac r{d_T}\left(1-\frac{(z\cdot\xi)^2}{r^2}\right).
\end{align*}
Because the bracket is at most one and $r\leq R$,
\begin{align*}
  \omega_\xi
  \leq\frac{r}{d_T}
  \leq\frac{R}{d_T}
  =\chi_T.
\end{align*}
On the other hand,
\begin{align*}
  (z\cdot\xi)^2
  \leq|z|^2
  =r^2-d_T^2,
\end{align*}
and hence
\begin{align*}
  \omega_\xi
  \geq
  \frac{r}{d_T}\frac{d_T^2}{r^2}
  =
  \frac{d_T}{r}
  \geq
  \frac{d_T}{R}
  =
  \chi_T^{-1}.
\end{align*}
This proves \eqref{eq:first-direction-weight-bounds} and hence \eqref{eq:first-direction-L2-transfer}.
\end{proof}

For a scalar function $f$ on $K$, we define the weighted norm
\begin{align}
  \|f\|_{0,K;T}
  :=\left(
    \int_K\frac{|f(x)|^2}{J_T(G_T(x))}\,\diff\sigma(x)
  \right)^{1/2}.
  \label{eq:weighted-surface-norm}
\end{align}
From the area formula, this is exactly the pullback norm
\begin{align}
  \|f\|_{0,K;T}
  =
  \|f\circ\Psi_T\|_{L^2(T)}.
  \label{eq:weighted-surface-norm-pullback}
\end{align}
For $v\in H^2(K)$ and unit vectors $\xi,\eta\in V_T$, we define, for almost every $x\in K$,
\begin{align}
  \mathscr Q_{\xi,\eta}v(x)
  :=a_\xi a_\eta
    \nabla_{\Sph}^2v[e_\xi,e_\eta]
  -\frac{1}{r^2}
    \left(
      (z\cdot\eta)a_\xi\partial_{e_\xi}^{\Sph}v
      +(z\cdot\xi)a_\eta\partial_{e_\eta}^{\Sph}v
    \right),
  \label{eq:transported-second-direction-quantity}
\end{align}
where $r$, $z$, $a_\xi$, and $a_\eta$ are evaluated at $y=G_T(x)$, while the spherical derivatives and transported directions are evaluated at $x$.

\begin{lemma}[Second directional derivative transfer with curvature correction]
  \label{lem:second-direction-transfer}
  Let $v\in H^2(K)$, set $w=v\circ\Psi_T$, and let $\xi,\eta\in V_T$ be unit vectors. Then,
  \begin{align}
    \|\partial_\xi\partial_\eta w\|_{L^2(T)}
    =\|\mathscr Q_{\xi,\eta}v\|_{0,K;T}.
    \label{eq:second-direction-transfer-identity}
  \end{align}
  Furthermore, with
  \begin{align}
    \kappa_T:=\frac{H_T\chi_T^2}{4R^2},
    \label{eq:curvature-transfer-coefficient}
  \end{align}
  one has
  \begin{align}
    \|\mathscr Q_{\xi,\eta}v\|_{0,K;T}
    \leq
    \chi_T
    \|\nabla_{\Sph}^2v[e_\xi,e_\eta]\|_{L^2(K)}
    +\kappa_T
    \left(
      \|\partial_{e_\xi}^{\Sph}v\|_{L^2(K)}
      +\|\partial_{e_\eta}^{\Sph}v\|_{L^2(K)}
    \right).
    \label{eq:second-direction-transfer-bound}
  \end{align}
\end{lemma}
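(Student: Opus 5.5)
The plan is to obtain the identity directly from the pointwise second-derivative formula, and then bound the weighted norm term by term. The only real work is the pointwise weight computations.

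\emph{Step 1: the identity.} By \eqref{eq:radial-second-directional-derivative} in Lemma~\ref{lem:exact-directional-derivatives}, extended to $v\in H^2(K)$ by the density argument after that lemma, we have for almost every $y\in T$
\begin{align*}
  \partial_\xi\partial_\eta w(y)=D_T^2w(y)[\xi,\eta]=(\mathscr Q_{\xi,\eta}v)(\Psi_T(y)).
\end{align*}
This holds because $r,z,a_\xi,a_\eta$ in \eqref{eq:transported-second-direction-quantity} are evaluated at $G_T(x)=y$. Thus $\partial_\xi\partial_\eta w=(\mathscr Q_{\xi,\eta}v)\circ\Psi_T$. The pullback identity \eqref{eq:weighted-surface-norm-pullback} then gives \eqref{eq:second-direction-transfer-identity} immediately.

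\emph{Step 2: splitting.} Apply the triangle inequality in the weighted norm $\|\cdot\|_{0,K;T}$ to the three terms of $\mathscr Q_{\xi,\eta}v$. Each term has the form (pointwise geometric factor)$\times$(spherical derivative of $v$). Hence it suffices to bound the square root of (geometric factor)$^2/J_T$ uniformly on $T$, using $|\xi|=|\eta|=1$.

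\emph{Step 3: Hessian term.} By \eqref{eq:radial-singular-values}, $a_\xi,a_\eta\le R/r$. By \eqref{eq:radial-surface-jacobian}, $J_T=R^2d_T/r^3$. Therefore
\begin{align*}
  \frac{a_\xi^2a_\eta^2}{J_T}\le\frac{R^4}{r^4}\cdot\frac{r^3}{R^2d_T}=\frac{R^2}{rd_T}\le\frac{R^2}{d_T^2}=\chi_T^2,
\end{align*}
using $r\ge d_T$. This yields the contribution $\chi_T\|\nabla_{\Sph}^2v[e_\xi,e_\eta]\|_{L^2(K)}$.

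\emph{Step 4: first-order terms.} Use $|z\cdot\eta|\le|z|\le H_T/4$ from \eqref{eq:radial-offset-bound} together with $a_\xi\le R/r$. This gives
\begin{align*}
  \frac{|z\cdot\eta|\,a_\xi}{r^2\sqrt{J_T}}\le\frac{H_T}{4}\cdot\frac{R}{r^3}\cdot\frac{r^{3/2}}{R\,d_T^{1/2}}=\frac{H_T}{4\,r^{3/2}d_T^{1/2}}\le\frac{H_T}{4d_T^2}=\frac{H_T\chi_T^2}{4R^2}=\kappa_T.
\end{align*}
The term with $\xi$ and $\eta$ interchanged is handled symmetrically. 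These give the contribution $\kappa_T\bigl(\|\partial^{\Sph}_{e_\xi}v\|_{L^2(K)}+\|\partial^{\Sph}_{e_\eta}v\|_{L^2(K)}\bigr)$, which completes \eqref{eq:second-direction-transfer-bound}.

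The only delicate point is the bookkeeping of powers of $r$ and $d_T$ in Steps 3 and 4. In both steps the exponent of $r$ must be negative, so that the crude bound $r\ge d_T$ applies. The computations above confirm that it is ($r^{-1}$ and $r^{-3/2}$ respectively), so no finer use of the singular-value structure is needed.
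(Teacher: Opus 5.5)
Your proposal is correct and follows the paper's proof essentially step for step. You obtain the identity from the pointwise formula \eqref{eq:radial-second-directional-derivative} together with the pullback form \eqref{eq:weighted-surface-norm-pullback}, and then apply the triangle inequality with the same pointwise weight bounds $J_T^{-1/2}a_\xi a_\eta\le R/\sqrt{d_Tr}\le\chi_T$ and $J_T^{-1/2}|z\cdot\eta|\,a_\xi/r^2\le H_T/(4d_T^2)=\kappa_T$.
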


\begin{proof}
By \eqref{eq:radial-second-directional-derivative},
\begin{align*}
  \partial_\xi\partial_\eta w(y)
  =
  \mathscr Q_{\xi,\eta}v(\Psi_T(y))
\end{align*}
for almost every $y\in T$. The area formula and \eqref{eq:weighted-surface-norm-pullback} therefore give \eqref{eq:second-direction-transfer-identity}.

Because $\xi$ and $\eta$ are unit vectors, \eqref{eq:radial-first-derivative} gives
\begin{align*}
  a_\xi\leq\frac{R}{r},
  \quad
  a_\eta\leq\frac{R}{r}.
\end{align*}
Furthermore, \eqref{eq:radial-surface-jacobian} gives
\begin{align*}
  J_T^{-1/2}
  =
  \frac{r^{3/2}}{R\sqrt{d_T}}.
\end{align*}
For the Hessian term,
\begin{align*}
  J_T^{-1/2}a_\xi a_\eta
  \leq \frac{R}{\sqrt{d_Tr}}
  \leq \chi_T.
\end{align*}
For either first-derivative term, \eqref{eq:radial-offset-bound} gives
\begin{align*}
  J_T^{-1/2}\frac{|z\cdot\eta|a_\xi}{r^2}
  \le\frac{|z|}{\sqrt{d_T}r^{3/2}}
  \le\frac{H_T}{4d_T^2}=\kappa_T,
\end{align*}
and similarly with $\xi$ and $\eta$ interchanged. The triangle inequality proves \eqref{eq:second-direction-transfer-bound}.
\end{proof}

The coefficient $\kappa_T$ measures the contribution of the nonlinearity of the radial closest-point map. Its dimensionless refinement dependence is displayed by
\begin{align*}
  \kappa_T
  =
  \frac{\chi_T^2}{4R}\frac{H_T}{R}.
\end{align*}
Under a uniform spherical-locality bound, it therefore vanishes as $H_T/R\to0$ on the fixed sphere.

\subsection{Affine-core anisotropic estimates}
The estimates below are intrinsic to the affine plane $c_T+V_T$. The isometry $\iota_T:\widetilde T\to T$ identifies them with the corresponding planar estimates. Recall that the unit chordal edge directions satisfy
\begin{align}
  \ell_{T,1}=q_{T,1},
  \quad
  \ell_{T,2}=\frac{p_3-p_1}{h_{T,2}}
  =s_Tq_{T,1}+t_Tq_{T,2}.
  \label{eq:affine-core-edge-directions}
\end{align}
They need not be orthogonal; $q_{T,1},q_{T,2}$ remain an orthonormal basis used to resolve the intrinsic $H^1$-seminorm. These are intrinsic edge directions and need not be singular-vector directions of the affine map. The isometry $\iota_T:\widetilde T\to T$ introduces no additional geometric factor.

The $L^2$ estimate below follows directly from the affine anisotropic interpolation theory. For the $H^1$ estimate, we keep the reference derivative components separate; this is what confines the maximum-angle factor $\mu_T$ to the first directional scale.

\begin{lemma}[Affine-core $\mathbb P^1$ interpolation]
  \label{lem:affine-core-interpolation}
  There exist constants $C_{\mathrm{aff},0},C_{\mathrm{aff},1}>0$, depending only on the fixed reference triangle and independent of the element size and shape, such that every $w\in H^2(T)$ satisfies
  \begin{align}
    \|w-I_Tw\|_{L^2(T)}
    \leq C_{\mathrm{aff},0}
    \sum_{i,j=1}^2h_{T,i}h_{T,j}
    \|\partial_{\ell_{T,i}}\partial_{\ell_{T,j}}w\|_{L^2(T)}.
    \label{eq:affine-L2}
  \end{align}
  Furthermore, with
  \begin{align*}
    H_{T,1}^{\mathrm{dir}}:=h_{T,1},
    \quad
    H_{T,2}^{\mathrm{dir}}:=h_{T,2}t_T,
  \end{align*}
  one has
  \begin{align}
    |w-I_Tw|_{H^1(T)}
    \leq C_{\mathrm{aff},1}\mu_T
    \sum_{i=1}^2H_{T,i}^{\mathrm{dir}}
    |\partial_{\ell_{T,i}}w|_{H^1(T)}.
    \label{eq:affine-H1-directional-scales}
  \end{align}
  Equivalently, because $\mu_Tt_T=2$,
  \begin{align}
    |w-I_Tw|_{H^1(T)}
    \leq C_{\mathrm{aff},1}
    \left(
      \mu_Th_{T,1}|\partial_{\ell_{T,1}}w|_{H^1(T)}
      +2h_{T,2}|\partial_{\ell_{T,2}}w|_{H^1(T)}
    \right).
    \label{eq:affine-H1-sharp}
  \end{align}
\end{lemma}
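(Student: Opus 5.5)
Everything reduces to the reference triangle through $\Phi_T=\iota_T\circ\Phi_{\widetilde T}$. Since $\iota_T$ is an isometry (see \eqref{eq:isometric-L2-transfer}--\eqref{eq:isometric-hessian-transfer}), it suffices to work with $\tilde w$ on $\widetilde T$. The key observation concerns the columns of $A_{\widetilde T}$, which are $h_{T,1}e_1$ and $h_{T,2}(s_T,t_T)^\top$. Their images under $Q_T$ are $h_{T,1}\ell_{T,1}$ and $h_{T,2}\ell_{T,2}$. Hence, for $\hat w:=w\circ\Phi_T$, the chain rule gives
\begin{align*}
  \partial_{\hat x_i}\hat w=h_{T,i}(\partial_{\ell_{T,i}}w)\circ\Phi_T,
  \quad
  \partial_{\hat x_i}\partial_{\hat x_j}\hat w
  =h_{T,i}h_{T,j}(\partial_{\ell_{T,i}}\partial_{\ell_{T,j}}w)\circ\Phi_T .
\end{align*}
Interpolation commutes with the pullback, $(I_Tw)\circ\Phi_T=I_{\widehat T}\hat w$. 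The Jacobian is constant, $J_{\Phi_T}=2|T|_2$, by \eqref{eq:affine-surface-jacobian}.

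For \eqref{eq:affine-L2}, we use the reference estimate $\|\hat u-I_{\widehat T}\hat u\|_{L^2(\widehat T)}\le C|\hat u|_{H^2(\widehat T)}$. This is the Bramble--Hilbert lemma together with continuity of $I_{\widehat T}$ on $H^2(\widehat T)\hookrightarrow \mathcal C^0$. We bound $|\hat u|_{H^2}$ by the sum of the four mixed-derivative norms. Scaling both sides by $J_{\Phi_T}^{1/2}$ via \eqref{eq:affine-change-of-variables} then gives \eqref{eq:affine-L2}, with the Jacobian factor cancelling exactly. No angle condition enters here.

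For the $H^1$ estimate, the main point is to avoid the isotropic bound $|\hat u-I\hat u|_{H^1}\le C|\hat u|_{H^2}$, which would mix the two directions. Instead we use the componentwise anisotropic reference estimate
\begin{align*}
  \|\partial_{\hat x_j}(\hat u-I_{\widehat T}\hat u)\|_{L^2(\widehat T)}
  \le C|\partial_{\hat x_j}\hat u|_{H^1(\widehat T)},\quad j=1,2.
\end{align*}
This is the classical result of Babu\v{s}ka--Aziz and Apel. It follows because $\partial_{\hat x_j}(I_{\widehat T}\hat u)$ equals the mean of $\partial_{\hat x_j}\hat u$ along the reference edge in direction $\hat x_j$. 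The functional $\hat g\mapsto\hat g-\text{(edge mean of }\hat g)$ is bounded $H^1(\widehat T)\to L^2(\widehat T)$ by the trace theorem, and it annihilates constants. A Poincaré-type (Bramble--Hilbert) argument then gives the bound with $|\partial_{\hat x_j}\hat u|_{H^1}$, and no other derivatives appear. This is the step I regard as the crux; I would cite it rather than reprove it.

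It remains to push the componentwise estimate forward. By \eqref{eq:affine-gradient-pushforward},
\begin{align*}
  \nabla_T(w-I_Tw)\circ\Phi_T=Q_TA_{\widetilde T}^{-\top}\widehat\nabla\hat e,
  \quad
  \hat e:=\hat w-I_{\widehat T}\hat w .
\end{align*}
The inverse is
\begin{align*}
  A_{\widetilde T}^{-1}=\frac{1}{h_{T,1}h_{T,2}t_T}
  \begin{pmatrix}h_{T,2}t_T&-h_{T,2}s_T\\0&h_{T,1}\end{pmatrix},
\end{align*}
so every entry in column $j$ of $A_{\widetilde T}^{-\top}$ is bounded by $1/(h_{T,j}t_T)$. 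Here we use $|s_T|\le1$ and note that the $(2,2)$ entry of $A^{-1}$ gives $1/(h_{T,2}t_T)$. Therefore
\begin{align*}
  |\nabla_T(w-I_Tw)|\circ\Phi_T\le \frac{1}{t_T}\sum_j h_{T,j}^{-1}|\partial_{\hat x_j}\hat e| .
\end{align*}
We integrate, scale by $J_{\Phi_T}$, and apply the componentwise reference estimate. Then we transfer $|\partial_{\hat x_j}\hat w|_{H^1(\widehat T)}$ back to $T$: its gradient is $A_{\widetilde T}^{\top}Q_T^\top\nabla_T(\partial_{\ell_{T,j}}w)\,h_{T,j}$, and $\|A_{\widetilde T}\|\le \sqrt2\,h_{T,1}$. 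The powers of $h_{T,j}$ cancel and the Jacobians cancel.

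The resulting prefactor is $t_T^{-1}h_{T,1}$ for both directions. This is too crude for the second direction and must be refined. To recover $H^{\mathrm{dir}}_{T,2}=h_{T,2}t_T$, I would use the symmetric choice of reference axes for the second component: apply the same argument with the edge directions swapped, i.e. relabel so that $p_3-p_1$ is the first column. The Bramble--Hilbert constant is unchanged. Alternatively, I would estimate the $\ell_{T,2}$ component of the error directly via its edge-mean representation on $T$, using a trace inequality with height $h_{T,1}t_T$. In either case the factor becomes $\mu_T h_{T,2}t_T=2h_{T,2}$, since $\mu_Tt_T=2$ by \eqref{eq:angular-factor-identity}. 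Combining the two directional bounds yields \eqref{eq:affine-H1-directional-scales}, and substituting $\mu_Tt_T=2$ gives \eqref{eq:affine-H1-sharp}. I expect this bookkeeping, which confines $\mu_T$ to the $h_{T,1}$ term and turns the second term into $h_{T,2}$, to be the main obstacle.
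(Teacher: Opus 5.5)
\textbf{There is a genuine gap in the final step of the $H^1$ estimate.} Your $L^2$ argument is correct and more self-contained than the paper's citation of Ishizaka--Kobayashi--Tsuchiya. Your componentwise reference estimate is exactly the one the paper uses.

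The gap is the step you yourself flag as the main obstacle. Write $D_i:=|\partial_{\ell_{T,i}}w|_{H^1(T)}$; neither of your two remedies closes the gap.
\begin{itemize}
\item \emph{Relabelling} so that $p_3-p_1$ is the first column changes nothing. The new affine matrix still has a column of length $h_{T,1}$, so your transfer bound still produces $h_{T,1}$. Its inverse transpose still has column norms $1/(h_{T,2}t_T)$ and $1/(h_{T,1}t_T)$. Repeating the computation therefore returns the same $Ct_T^{-1}h_{T,1}(D_1+D_2)$. The unchanged Bramble--Hilbert constant is beside the point, because the loss sits in the geometric factors.
\item \emph{A sharper estimate of $\partial_{\ell_{T,2}}(w-I_Tw)$} cannot help either. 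The factor $1/t_T$ enters when you reconstruct the full gradient from two non-orthogonal directional derivatives, since $\partial_{q_{T,2}}=t_T^{-1}(\partial_{\ell_{T,2}}-s_T\partial_{\ell_{T,1}})$. Even the sharp componentwise bound for $\partial_{\ell_{T,2}}(w-I_Tw)$ contains $h_{T,2}\|\partial^2_{\ell_{T,2}}w\|_{L^2(T)}$. You are therefore left with $t_T^{-1}h_{T,2}\|\partial^2_{\ell_{T,2}}w\|_{L^2(T)}$, which puts $\mu_T$ on the short scale.
\end{itemize}
So your claim that ``the factor becomes $\mu_Th_{T,2}t_T$'' is asserted, not derived.

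\textbf{The missing idea is to reallocate Hessian entries between the two directional data, not to improve any estimate.}

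First, do not collapse the transfer through $\|A_{\widetilde T}\|\le\sqrt2h_{T,1}$. That step already places $h_{T,1}$ on the $\hat x_2\hat x_2$ entry. Instead, keep
\[
\partial_{\hat x_i}\partial_{\hat x_j}\hat w=h_{T,i}h_{T,j}(\partial_{\ell_{T,i}}\partial_{\ell_{T,j}}w)\circ\Phi_T
\]
entrywise.

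The contribution of $\partial_{\hat x_2}\hat e$ is then
\[
Ct_T^{-1}\bigl(h_{T,1}\|\partial_{\ell_{T,1}}\partial_{\ell_{T,2}}w\|+h_{T,2}\|\partial^2_{\ell_{T,2}}w\|\bigr).
\]
Treat its two pieces as follows.
\begin{itemize}
\item Charge the mixed term to $D_1$, using symmetry of the Hessian: $|\partial_{\ell_{T,2}}\partial_{\ell_{T,1}}w|\le|\nabla_T\partial_{\ell_{T,1}}w|$.
\item Split the pure term using $\ell_{T,2}=s_Tq_{T,1}+t_Tq_{T,2}$:
\[
\partial^2_{\ell_{T,2}}w=s_T\,\partial_{\ell_{T,2}}\partial_{\ell_{T,1}}w+t_T\,\partial_{q_{T,2}}\partial_{\ell_{T,2}}w .
\]
The first piece goes to $D_1$ with coefficient $h_{T,2}|s_T|/t_T\le h_{T,1}/t_T$. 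Only the second piece is charged to $D_2$, and there $t_T$ cancels $1/t_T$, leaving $h_{T,2}$.
\end{itemize}

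The $\partial_{\hat x_1}\hat e$ contribution is at most $Ct_T^{-1}(h_{T,1}+h_{T,2})D_1$. Adding everything gives
\[
C\bigl(4h_{T,1}t_T^{-1}D_1+h_{T,2}D_2\bigr)\le 2C\bigl(\mu_Th_{T,1}D_1+2h_{T,2}D_2\bigr).
\]
This is precisely how the paper confines $\mu_T$ to the longer directional scale.
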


\begin{proof}
  Because $T$ is two-dimensional, $H^2(T)\hookrightarrow\mathcal C^0(T)$ and the nodal interpolant is well defined. We set $\tilde w:=w\circ\iota_T\in H^2(\widetilde T)$, and let $I_{\widetilde T}$ denote the ordinary nodal $\mathbb P^1$ interpolation operator on $\widetilde T$. We define
  \begin{align*}
    \tilde\ell_{T,1}:=(1,0)^{\top},
    \quad
    \tilde\ell_{T,2}:=(s_T,t_T)^{\top}.
  \end{align*}
  The isometry $\iota_T(z)=p_1+Q_Tz$ maps the vertices and these two unit directions of $\widetilde T$ onto those of $T$. Unisolvence, the chain rule, and $Q_T^{\top}Q_T=I_2$ therefore give
  \begin{align}
    (I_Tw)\circ\iota_T &=I_{\widetilde T}\tilde w,
    \quad
    \|w-I_Tw\|_{L^2(T)}
      =\|\tilde w-I_{\widetilde T}\tilde w\|_{L^2(\widetilde T)},
    \notag\\
    |w-I_Tw|_{H^1(T)}
     &=|\tilde w-I_{\widetilde T}\tilde w|_{H^1(\widetilde T)},
    \quad
    |\partial_{\ell_{T,i}}w|_{H^1(T)}
      =|\partial_{\tilde\ell_{T,i}}\tilde w|_{H^1(\widetilde T)},
    \notag\\
    \|\partial_{\ell_{T,i}}\partial_{\ell_{T,j}}w\|_{L^2(T)}
      &=\|\partial_{\tilde\ell_{T,i}}
        \partial_{\tilde\ell_{T,j}}\tilde w\|_{L^2(\widetilde T)}.
    \label{eq:affine-core-isometry-summary}
  \end{align}
  Indeed, $\iota_T$ maps the vertices of $\widetilde T$ onto the corresponding vertices of $T$. Therefore, $(I_Tw)\circ\iota_T$ and $I_{\widetilde T}\tilde w$ are affine functions on $\widetilde T$ with the same three nodal values, and unisolvence proves the first identity. Applying \eqref{eq:isometric-L2-transfer} and \eqref{eq:isometric-gradient-norm} to the resulting interpolation-error identity gives the $L^2$- and $H^1$-isometries. Furthermore,
\begin{align*}
  Q_T\tilde\ell_{T,i}=\ell_{T,i},
  \quad i=1,2.
\end{align*}
Therefore, \eqref{eq:isometric-directional-transfer} and \eqref{eq:isometric-hessian-transfer}, followed by the corresponding $L^2$- and $H^1$-isometries, give the last two identities.  These relations hold first for smooth functions and then for $H^2$ functions by density.

  For the $L^2$ estimate, apply Theorem~B, equation~(58), and Corollary~1 of \cite{IshizakaKobayashiTsuchiya2023} with $d=2$, $k=1$, $\ell=2$, $p=q=2$, and $m=0$. The edge lengths and directions are $h_{T,i}$ and $\tilde\ell_{T,i}$, respectively. Thus,
  \begin{align*}
    \|\tilde w-I_{\widetilde T}\tilde w\|_{L^2(\widetilde T)}
    \leq C_{\mathrm{aff},0}
    \sum_{i,j=1}^2h_{T,i}h_{T,j}
    \|\partial_{\tilde\ell_{T,i}}
      \partial_{\tilde\ell_{T,j}}\tilde w\|_{L^2(\widetilde T)}.
  \end{align*}
  The identities in \eqref{eq:affine-core-isometry-summary} prove \eqref{eq:affine-L2}.

For the $H^1$ estimate, we work componentwise on the fixed reference triangle using the affine matrix $A_{\widetilde T}$ defined in Section~\ref{sec:geometry}. We set
\begin{align*}
  J_{\mathrm{aff}}:=\det A_{\widetilde T}=h_{T,1}h_{T,2}t_T,
  \quad
  W:=\tilde w\circ A_{\widetilde T},
  \quad E:=W-I_{\widehat T}W.
\end{align*}
For the reference coordinates $(\hat x_1,\hat x_2)$, write $W_i:=\partial_{\hat x_i}W$, $W_{ij}:=\partial_{\hat x_i}\partial_{\hat x_j}W$, and $E_i:=\partial_{\hat x_i}E$. The two-dimensional componentwise interpolation estimate of
\cite[Lemma~2.4]{Apel1999}, specialised to linear Lagrange interpolation, gives
\begin{align}
  \|E_1\|_{L^2(\widehat T)}
  &\leq C_{\mathrm{ref}}
    \bigl(\|W_{11}\|_{L^2(\widehat T)}
          +\|W_{12}\|_{L^2(\widehat T)}\bigr),
  \notag\\
  \|E_2\|_{L^2(\widehat T)}
  &\leq C_{\mathrm{ref}}
    \bigl(\|W_{12}\|_{L^2(\widehat T)}
          +\|W_{22}\|_{L^2(\widehat T)}\bigr).
  \label{eq:affine-reference-component-bounds}
\end{align}
For a direct proof, let $\widehat e_1$ and $\widehat e_2$ be the coordinate edges of $\widehat T$. Since $E$ vanishes at the vertices,
\begin{align*}
  \int_{\widehat e_1}E_1\,\diff s=0,
  \quad
  \int_{\widehat e_2}E_2\,\diff s=0.
\end{align*}
The Poincar\'e--Friedrichs inequality with zero mean on a fixed boundary edge gives
\begin{align*}
  \|E_i\|_{L^2(\widehat T)}
  \leq C|E_i|_{H^1(\widehat T)},\quad i=1,2.
\end{align*}
Because $I_{\widehat T}W$ is affine, $D^2E=D^2W$, which proves \eqref{eq:affine-reference-component-bounds}. The argument extends from smooth functions to $H^2(\widehat T)$ by density.

The two columns of $A_{\widetilde T}^{-\top}$ have Euclidean norms $1/(h_{T,1}t_T)$ and $1/(h_{T,2}t_T)$, respectively. Consequently,
\begin{align*}
  |\tilde w-I_{\widetilde T}\tilde w|_{H^1(\widetilde T)}
  \leq J_{\mathrm{aff}}^{1/2}
  \left(\frac{\|E_1\|_{L^2(\widehat T)}}{h_{T,1}t_T}
       +\frac{\|E_2\|_{L^2(\widehat T)}}{h_{T,2}t_T}\right).
\end{align*}
In the orthonormal coordinates $(x_1,x_2)$ on $\widetilde T$, write
\begin{align*}
  D_i:=|\partial_{\tilde\ell_{T,i}}\tilde w|_{H^1(\widetilde T)},
  \quad i=1,2.
\end{align*}
The chain rule and $\tilde\ell_{T,2}=(s_T,t_T)^{\top}$ yield
\begin{align*}
  W_{11}
  &=h_{T,1}^2\tilde w_{x_1x_1}\circ A_{\widetilde T},\\
  W_{12}
  &=h_{T,1}h_{T,2}
    (s_T\tilde w_{x_1x_1}+t_T\tilde w_{x_1x_2})\circ A_{\widetilde T},\\
  W_{22}
  &=h_{T,2}^2\bigl[s_T(s_T\tilde w_{x_1x_1}+t_T\tilde w_{x_1x_2})
      +t_T\partial_{x_2}\partial_{\tilde\ell_{T,2}}\tilde w\bigr]
      \circ A_{\widetilde T}.
\end{align*}
Because $(s_T,t_T)^{\top}$ is a unit vector, the exact change of variables gives
\begin{align*}
  J_{\mathrm{aff}}^{1/2}\|W_{11}\|_{L^2(\widehat T)}
    &\leq h_{T,1}^2D_1,\\
  J_{\mathrm{aff}}^{1/2}\|W_{12}\|_{L^2(\widehat T)}
    &\leq h_{T,1}h_{T,2}D_1,\\
  J_{\mathrm{aff}}^{1/2}\|W_{22}\|_{L^2(\widehat T)}
    &\leq h_{T,2}^2\bigl(|s_T|D_1+t_TD_2\bigr).
\end{align*}
Substituting these bounds and using $h_{T,2}\leq h_{T,1}$ and $|s_T|\leq1$, we obtain
\begin{align*}
  |\tilde w-I_{\widetilde T}\tilde w|_{H^1(\widetilde T)}
  &\leq C_{\mathrm{ref}}
    \left(\frac{2h_{T,1}+h_{T,2}(1+|s_T|)}{t_T}D_1+h_{T,2}D_2\right)\\
  &\leq C_{\mathrm{ref}}
    \left(\frac{4h_{T,1}}{t_T}D_1+h_{T,2}D_2\right)\\
  &\leq C_{\mathrm{aff},1}
    \left(\mu_Th_{T,1}D_1+2h_{T,2}D_2\right),
\end{align*}
with $C_{\mathrm{aff},1}:=2C_{\mathrm{ref}}$, since $\mu_T=2/t_T$.
Thus,
\begin{align*}
  |\tilde w-I_{\widetilde T}\tilde w|_{H^1(\widetilde T)}
  \leq C_{\mathrm{aff},1}
  \left(\mu_Th_{T,1}
      |\partial_{\tilde\ell_{T,1}}\tilde w|_{H^1(\widetilde T)}
      +2h_{T,2}
      |\partial_{\tilde\ell_{T,2}}\tilde w|_{H^1(\widetilde T)}\right),
\end{align*}
where $C_{\mathrm{aff},1}$ depends only on $\widehat T$.
The isometry identities in
\eqref{eq:affine-core-isometry-summary} prove
\eqref{eq:affine-H1-sharp}, and $\mu_Tt_T=2$ proves the equivalent
form \eqref{eq:affine-H1-directional-scales}.
\end{proof}

Keeping the derivative components separate in the affine scaling confines the maximum-angle factor $\mu_T$ to the first directional term. The identity $\mu_Tt_T=2$ gives the second scale $2h_{T,2}$ and the equivalent form \eqref{eq:affine-H1-directional-scales}.

\subsection{Complete local anisotropic interpolation estimates}
We combine the commuting interpolation identity, the exact radial transfer formulas, and the affine-core estimates.

For $v\in H^2(K)$, let $w=v\circ\Psi_T$ denote the pullback of $v$ to the chordal triangle $T$. For $i,j\in\{1,2\}$, we define
\begin{align}
  \mathscr E_{ij}(v;K)
  :=
  \|\mathscr Q_{\ell_{T,i},\ell_{T,j}}v\|_{0,K;T},
  \label{eq:local-directional-second-data}
\end{align}
and, for $i\in\{1,2\}$,
\begin{align}
  \mathscr G_i(v;K)
  :=
  \left(
    \sum_{k=1}^2
    \|\mathscr Q_{\ell_{T,i},q_{T,k}}v\|_{0,K;T}^2
  \right)^{1/2}.
  \label{eq:local-directional-H1-data}
\end{align}
From Lemma~\ref{lem:second-direction-transfer},
\begin{align}
  \mathscr E_{ij}(v;K)
  =
  \|\partial_{\ell_{T,i}}
    \partial_{\ell_{T,j}}w\|_{L^2(T)}.
  \label{eq:local-second-direction-identity}
\end{align}
Furthermore, because $w\in H^2(T)$, its weak Hessian is symmetric. Therefore, using also the fact that $q_{T,1},q_{T,2}$ form an orthonormal basis of $V_T$,
\begin{align*}
  \mathscr G_i(v;K)^2
  &=
  \sum_{k=1}^2
  \|\partial_{\ell_{T,i}}
    \partial_{q_{T,k}}w\|_{L^2(T)}^2
  =
  \sum_{k=1}^2
  \|\partial_{q_{T,k}}
    \partial_{\ell_{T,i}}w\|_{L^2(T)}^2
  =
  |\partial_{\ell_{T,i}}w|_{H^1(T)}^2.
\end{align*}
Therefore,
\begin{align}
  \mathscr G_i(v;K)
  =
  |\partial_{\ell_{T,i}}w|_{H^1(T)}.
  \label{eq:local-directional-H1-identity}
\end{align}

The affine-core estimates and the exact directional transfer identities give the following local anisotropic bound. No uniform bound on $\mu_T$ or $\chi_T$ is imposed.

\begin{theorem}[Directional anisotropic interpolation on an exact spherical triangle]
  \label{thm:final-local-anisotropic}
  Let $K=\Psi_T(T)$ be an exact spherical triangle and let $v\in H^2(K)$. Then, the mapped linear Lagrange interpolant satisfies
  \begin{align}
    \|v-I_Kv\|_{L^2(K)}
    \leq C_{\mathrm{aff},0}\chi_T
    \sum_{i,j=1}^2
      h_{T,i}h_{T,j}\mathscr E_{ij}(v;K),
    \label{eq:local-anisotropic-L2}
  \end{align}
  and
  \begin{align}
    |v-I_Kv|_{H^1(K)}
    \leq C_{\mathrm{aff},1}\chi_T^{1/2}
    \left(
      \mu_Th_{T,1}\mathscr G_1(v;K)
      +2h_{T,2}\mathscr G_2(v;K)
    \right).
    \label{eq:local-anisotropic-H1}
  \end{align}
  Here, the constants $C_{\mathrm{aff},0}$ and $C_{\mathrm{aff},1}$ are independent of $R$, $h_T$, $H_T$, $\mu_T$, $\chi_T$, and the aspect ratio of $T$.
\end{theorem}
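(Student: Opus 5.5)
The plan is to chain together three results that are already available: the commuting identity of Proposition~\ref{prop:interpolant-commuting}, the norm transfer of Lemma~\ref{lem:sharp-norm-transfer}, and the affine-core estimates of Lemma~\ref{lem:affine-core-interpolation}. At the end we rewrite the chordal directional quantities through the exact identities \eqref{eq:local-second-direction-identity} and \eqref{eq:local-directional-H1-identity}. Since $v\in H^2(K)$ and pullback by $\Psi_T$ maps $H^2(K)$ continuously into $H^2(T)$, we have $w:=v\circ\Psi_T\in H^2(T)$, so $I_Tw$ is defined and both affine estimates apply.

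For the $L^2$ bound we set $u:=v-I_Kv\in H^1(K)$. By \eqref{eq:interpolation-error-pullback}, $u_T=u\circ\Psi_T=w-I_Tw$. The upper inequality in \eqref{eq:L2-norm-transfer} gives
\begin{align*}
  \|v-I_Kv\|_{L^2(K)}\le\chi_T\|w-I_Tw\|_{L^2(T)}.
\end{align*}
We then apply \eqref{eq:affine-L2} and replace each $\|\partial_{\ell_{T,i}}\partial_{\ell_{T,j}}w\|_{L^2(T)}$ by $\mathscr E_{ij}(v;K)$ using \eqref{eq:local-second-direction-identity}. This yields \eqref{eq:local-anisotropic-L2} with the same constant $C_{\mathrm{aff},0}$.

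For the $H^1$ bound we take the same $u$ and use the upper inequality in \eqref{eq:H1-seminorm-transfer}:
\begin{align*}
  |v-I_Kv|_{H^1(K)}\le\chi_T^{1/2}|w-I_Tw|_{H^1(T)}.
\end{align*}
Next we invoke \eqref{eq:affine-H1-sharp} and substitute $|\partial_{\ell_{T,i}}w|_{H^1(T)}=\mathscr G_i(v;K)$ from \eqref{eq:local-directional-H1-identity}. This gives \eqref{eq:local-anisotropic-H1}.

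The independence of the constants follows because the only constants that appear are $C_{\mathrm{aff},0}$ and $C_{\mathrm{aff},1}$. These come from the fixed reference triangle, while the transfer inequalities are exact with explicit factors $\chi_T$ and $\chi_T^{1/2}$.

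The only point needing care is regularity. We must check that $I_Kv$ lies in $H^1(K)$, so that Lemma~\ref{lem:sharp-norm-transfer} applies to $u$. This holds because $I_Kv=(I_Tw)\circ G_T$ with $G_T$ smooth up to the boundary. We must also check that the pointwise identity \eqref{eq:radial-second-directional-derivative} is valid in the weak sense for $v\in H^2(K)$, which was already justified by density after Lemma~\ref{lem:exact-directional-derivatives}. I expect no substantial obstacle; the argument is a direct composition of these results.
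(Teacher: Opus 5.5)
Your proposal is correct and follows the paper's proof essentially step for step. The paper also pulls back via the commuting identity \eqref{eq:interpolation-error-pullback}, applies the upper transfer bounds \eqref{eq:L2-norm-transfer} and \eqref{eq:H1-seminorm-transfer}, invokes \eqref{eq:affine-L2} and \eqref{eq:affine-H1-sharp}, and then rewrites the chordal data via \eqref{eq:local-second-direction-identity} and \eqref{eq:local-directional-H1-identity}, with the same regularity remarks about $w\in H^2(T)$.
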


\begin{proof}
Because $K$ is two-dimensional, the Sobolev embedding $H^2(K)\hookrightarrow\mathcal C^0(K)$ shows that the nodal interpolant $I_Kv$ is well defined. We set $w=v\circ\Psi_T$. Because $d_T>0$, the maps $\Psi_T$ and $G_T$ extend smoothly to neighbourhoods of $T$ and $K$, respectively. Therefore, the standard Sobolev pullback result gives $w\in H^2(T)$. Proposition~\ref{prop:interpolant-commuting} yields
\begin{align*}
  (v-I_Kv)\circ\Psi_T=w-I_Tw.
\end{align*}
Applying \eqref{eq:L2-norm-transfer}, \eqref{eq:affine-L2}, and \eqref{eq:local-second-direction-identity}, we obtain
\begin{align*}
  \|v-I_Kv\|_{L^2(K)}
  &\leq
  \chi_T\|w-I_Tw\|_{L^2(T)}
  \\
  &\leq
  C_{\mathrm{aff},0}\chi_T
  \sum_{i,j=1}^2
  h_{T,i}h_{T,j}
  \|\partial_{\ell_{T,i}}\partial_{\ell_{T,j}}w\|_{L^2(T)}
  =
  C_{\mathrm{aff},0}\chi_T
  \sum_{i,j=1}^2
  h_{T,i}h_{T,j}\mathscr E_{ij}(v;K).
\end{align*}
This proves \eqref{eq:local-anisotropic-L2}. Likewise, \eqref{eq:H1-seminorm-transfer}, \eqref{eq:affine-H1-sharp}, and \eqref{eq:local-directional-H1-identity} give
\begin{align*}
  |v-I_Kv|_{H^1(K)}
  &\leq
  \chi_T^{1/2}|w-I_Tw|_{H^1(T)}
  \\
  &\leq
  C_{\mathrm{aff},1}\chi_T^{1/2}
  \left(
    \mu_Th_{T,1}
    |\partial_{\ell_{T,1}}w|_{H^1(T)}
    +
    2h_{T,2}
    |\partial_{\ell_{T,2}}w|_{H^1(T)}
  \right)
  \\
  &=
  C_{\mathrm{aff},1}\chi_T^{1/2}
  \left(
    \mu_Th_{T,1}\mathscr G_1(v;K)
    +
    2h_{T,2}\mathscr G_2(v;K)
  \right),
\end{align*}
which proves \eqref{eq:local-anisotropic-H1}.
\end{proof}

Theorem~\ref{thm:final-local-anisotropic} is the sharp transported directional form of the local estimate. The next theorem bounds its data by intrinsic spherical derivatives, and the following corollaries recover isotropic local rates under progressively stronger geometric assumptions.

\begin{theorem}[Expanded directional anisotropic form]
  \label{thm:standard-directional-form}
  Under the assumptions of Theorem~\ref{thm:final-local-anisotropic}, for
  nonzero vectors $\xi,\eta\in V_T$, set
  \begin{align}
    \mathscr H_{\xi,\eta}(v)
    :=\|\nabla_{\Sph}^2v[e_\xi,e_\eta]\|_{L^2(K)},
    \quad
    \mathscr P_\xi(v)
    :=\|\partial_{e_\xi}^{\Sph}v\|_{L^2(K)}.
    \label{eq:intrinsic-directional-data}
  \end{align}
  Then,
  \begin{align}
    \mathscr E_{ij}(v;K)
    \leq \mathscr E_{ij}^{\mathrm{std}}(v;K)
    &:={\chi_T\mathscr H_{\ell_{T,i},\ell_{T,j}}(v)}
    +\kappa_T
      \left (
        \mathscr P_{\ell_{T,i}}(v)
        +\mathscr P_{\ell_{T,j}}(v)
      \right),
    \label{eq:directional-second-data-bound}
  \end{align}
  and
  \begin{align}
    \mathscr G_i(v;K)
    \leq \mathscr G_i^{\mathrm{std}}(v;K)
    &:={}
    \left
    \{
      \sum_{k=1}^2
      \left[
        \chi_T\mathscr H_{\ell_{T,i},q_{T,k}}(v)
        +\kappa_T
        \left(
          \mathscr P_{\ell_{T,i}}(v)
          +\mathscr P_{q_{T,k}}(v)
        \right)
      \right]^2
    \right\}^{1/2}.
    \label{eq:directional-H1-data-bound}
  \end{align}
  Consequently,
  \begin{align}
    \|v-I_Kv\|_{L^2(K)}
    \leq C_{\mathrm{aff},0}\chi_T
    \sum_{i,j=1}^2
      h_{T,i}h_{T,j}\mathscr E_{ij}^{\mathrm{std}}(v;K),
    \label{eq:expanded-local-anisotropic-L2}
  \end{align}
  and
  \begin{align}
    |v-I_Kv|_{H^1(K)}
    \leq C_{\mathrm{aff},1}\chi_T^{1/2}
    \left(
      \mu_Th_{T,1}\mathscr G_1^{\mathrm{std}}(v;K)
      +2h_{T,2}\mathscr G_2^{\mathrm{std}}(v;K)
    \right).
    \label{eq:expanded-local-anisotropic-H1}
  \end{align}
\end{theorem}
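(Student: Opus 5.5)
The plan is to apply Lemma~\ref{lem:second-direction-transfer} term by term and then substitute the resulting bounds into Theorem~\ref{thm:final-local-anisotropic}. All the analytic work is already in that lemma. What remains is bookkeeping, plus one normalisation check: the lemma is stated for unit vectors $\xi,\eta\in V_T$, so we must confirm that the directions used here are unit.

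For the first bound, take $\xi=\ell_{T,i}$ and $\eta=\ell_{T,j}$. By \eqref{eq:affine-core-edge-directions}, $\ell_{T,1}=q_{T,1}$ is a unit vector. The vector $\ell_{T,2}=(p_3-p_1)/h_{T,2}$ is also unit, because $s_T^2+t_T^2=1$. Both vectors lie in $V_T$. The definition \eqref{eq:local-directional-second-data} gives $\mathscr E_{ij}(v;K)=\|\mathscr Q_{\ell_{T,i},\ell_{T,j}}v\|_{0,K;T}$. Inequality \eqref{eq:second-direction-transfer-bound} then yields
\begin{align*}
  \mathscr E_{ij}(v;K)\leq\chi_T\mathscr H_{\ell_{T,i},\ell_{T,j}}(v)+\kappa_T\bigl(\mathscr P_{\ell_{T,i}}(v)+\mathscr P_{\ell_{T,j}}(v)\bigr)
\end{align*}
once we identify $\|\nabla_{\Sph}^2v[e_\xi,e_\eta]\|_{L^2(K)}$ with $\mathscr H_{\xi,\eta}(v)$ and $\|\partial^{\Sph}_{e_\xi}v\|_{L^2(K)}$ with $\mathscr P_\xi(v)$ as in \eqref{eq:intrinsic-directional-data}. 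This identification is immediate, since $e_\xi$ depends only on the direction of $\xi$. This proves \eqref{eq:directional-second-data-bound}.

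For the second bound, apply the same lemma with $\xi=\ell_{T,i}$ and $\eta=q_{T,k}$ for $k=1,2$; the $q_{T,k}$ are unit vectors in $V_T$ by \eqref{eq:chordal-basis-orthonormality}. This gives
\begin{align*}
  \|\mathscr Q_{\ell_{T,i},q_{T,k}}v\|_{0,K;T}\leq\chi_T\mathscr H_{\ell_{T,i},q_{T,k}}(v)+\kappa_T\bigl(\mathscr P_{\ell_{T,i}}(v)+\mathscr P_{q_{T,k}}(v)\bigr)
\end{align*}
for $k=1,2$. Both sides are nonnegative, so we may square these bounds, sum over $k$, and take square roots. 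By the definition \eqref{eq:local-directional-H1-data}, this yields \eqref{eq:directional-H1-data-bound}.

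Finally, the coefficients $C_{\mathrm{aff},0}\chi_Th_{T,i}h_{T,j}$, $C_{\mathrm{aff},1}\chi_T^{1/2}\mu_Th_{T,1}$ and $2C_{\mathrm{aff},1}\chi_T^{1/2}h_{T,2}$ in \eqref{eq:local-anisotropic-L2} and \eqref{eq:local-anisotropic-H1} are all nonnegative. Replacing each $\mathscr E_{ij}$ and $\mathscr G_i$ by its upper bound therefore preserves the inequalities and gives \eqref{eq:expanded-local-anisotropic-L2} and \eqref{eq:expanded-local-anisotropic-H1}.

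The only point needing care is that the lemma requires unit directions, and this is checked above. No genuine obstacle is expected.
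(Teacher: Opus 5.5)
Your proposal is correct and follows essentially the same route as the paper: apply \eqref{eq:second-direction-transfer-bound} with the unit directions $(\ell_{T,i},\ell_{T,j})$ and $(\ell_{T,i},q_{T,k})$, square and sum over $k$ using nonnegativity, and substitute into \eqref{eq:local-anisotropic-L2}--\eqref{eq:local-anisotropic-H1}. Your explicit check that $\ell_{T,2}$ is a unit vector (via $s_T^2+t_T^2=1$) is a detail the paper simply asserts.
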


\begin{proof}
Because $\ell_{T,1}$, $\ell_{T,2}$, $q_{T,1}$, and $q_{T,2}$ are unit vectors in $V_T$, we may apply \eqref{eq:second-direction-transfer-bound} with $(\xi,\eta)=(\ell_{T,i},\ell_{T,j})$ to obtain \eqref{eq:directional-second-data-bound}, and with $(\xi,\eta)=(\ell_{T,i},q_{T,k})$ for $k=1,2$. Because all quantities are nonnegative, squaring the resulting bounds for $k=1,2$, summing in $k$, and taking the square root gives \eqref{eq:directional-H1-data-bound}. Finally, substituting \eqref{eq:directional-second-data-bound} and \eqref{eq:directional-H1-data-bound} into \eqref{eq:local-anisotropic-L2} and \eqref{eq:local-anisotropic-H1}, respectively, proves \eqref{eq:expanded-local-anisotropic-L2} and \eqref{eq:expanded-local-anisotropic-H1}.
\end{proof}

Replacing the directional scales by $h_T$ gives the following isotropic consequences.

\begin{corollary}[Isotropic local consequence under spherical locality]
  \label{cor:locality-only-orders}
  Let $\eta_0\in(0,1)$ be fixed. Under the assumptions of Theorem~\ref{thm:final-local-anisotropic}, suppose in addition that
  \begin{align*}
    \vartheta_T\leq\eta_0.
  \end{align*}
  Then, there exists a constant $C=C(\eta_0)$, independent of $R$, $h_T$, $\mu_T$, and the aspect ratio of $T$, such that
  \begin{align}
    \|v-I_Kv\|_{L^2(K)}
    \leq Ch_T^2
    \left(
      \|\nabla_{\Sph}^2v\|_{L^2(K)}
      +\frac{H_T}{R^2}\|\nabla_{\Sph}v\|_{L^2(K)}
    \right),
    \label{eq:isotropic-local-L2}
  \end{align}
  whereas
  \begin{align}
    |v-I_Kv|_{H^1(K)}
    \leq C\mu_Th_T
    \left(
      \|\nabla_{\Sph}^2v\|_{L^2(K)}
      +\frac{H_T}{R^2}\|\nabla_{\Sph}v\|_{L^2(K)}
    \right).
    \label{eq:isotropic-local-H1}
  \end{align}
  Thus, no uniform upper bound on $\mu_T$ is required for the $L^2$ estimate, whereas the $H^1$ estimate retains the chordal semi-regularity factor $\mu_T$ explicitly. Because $H_T\leq4\eta_0R$, the lower-order term in \eqref{eq:isotropic-local-L2} may also be bounded by
\begin{align*}
    \frac{C(\eta_0)h_T^2}{R}
    \|\nabla_{\Sph}v\|_{L^2(K)}.
 \end{align*}
\end{corollary}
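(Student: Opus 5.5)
The proof starts from Theorem~\ref{thm:standard-directional-form} and replaces every directional scale by $h_T$. The hypothesis $\vartheta_T\leq\eta_0$ gives $\chi_T=(1-\vartheta_T^2)^{-1/2}\leq(1-\eta_0^2)^{-1/2}=:\chi_0(\eta_0)$. Therefore every power of $\chi_T$ in \eqref{eq:expanded-local-anisotropic-L2}, \eqref{eq:expanded-local-anisotropic-H1} and in $\kappa_T=H_T\chi_T^2/(4R^2)$ is bounded by a constant depending only on $\eta_0$. In particular $\kappa_T\leq C(\eta_0)H_T/R^2$.

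Next, the directional data are bounded by the full intrinsic derivatives. For unit tangent vectors $e,f$ at $x$, one has $|\nabla_{\Sph}^2v[e,f]|\leq|\nabla_{\Sph}^2v|$, since the Frobenius norm dominates the operator norm. Also $|\partial_e^{\Sph}v|\leq|\nabla_{\Sph}v|$. Hence, pointwise and then in $L^2(K)$,
\begin{align*}
  \mathscr H_{\xi,\eta}(v)\leq\|\nabla_{\Sph}^2v\|_{L^2(K)},
  \quad
  \mathscr P_\xi(v)\leq\|\nabla_{\Sph}v\|_{L^2(K)}
\end{align*}
for all unit $\xi,\eta\in V_T$. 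This holds even though $e_\xi$ varies with $x$. Consequently both $\mathscr E_{ij}^{\mathrm{std}}(v;K)$ and $\mathscr G_i^{\mathrm{std}}(v;K)$ are bounded by
\begin{align*}
  C(\eta_0)\Bigl(\|\nabla_{\Sph}^2v\|_{L^2(K)}+\frac{H_T}{R^2}\|\nabla_{\Sph}v\|_{L^2(K)}\Bigr).
\end{align*}
For $\mathscr G_i^{\mathrm{std}}$ this follows from the $\ell^2$ sum of two such brackets, which costs a factor $\sqrt2$.

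We insert these bounds, using $h_{T,2}\leq h_{T,1}\leq h_T$. In the $L^2$ estimate the four terms $h_{T,i}h_{T,j}$ are each at most $h_T^2$, which gives \eqref{eq:isotropic-local-L2} with no $\mu_T$. In the $H^1$ estimate, $\mu_Th_{T,1}+2h_{T,2}\leq 2\mu_Th_T$ because $\mu_T\geq2$ by \eqref{eq:angular-factor-identity}, which gives \eqref{eq:isotropic-local-H1}. The constants are $C_{\mathrm{aff},0}$ or $C_{\mathrm{aff},1}$ times powers of $\chi_0(\eta_0)$. They are therefore independent of $R$, $h_T$, $\mu_T$ and the aspect ratio.

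For the final remark, $H_T=4\vartheta_TR\leq4\eta_0R$ by \eqref{eq:locality-parameter}. Hence $H_T/R^2\leq4\eta_0/R$, and the lower-order term becomes $C(\eta_0)h_T^2R^{-1}\|\nabla_{\Sph}v\|_{L^2(K)}$.

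There is no real obstacle in this argument. The only point needing care is the pointwise domination of the transported directional Hessian and gradient entries by the full intrinsic norms, since the directions $e_\xi(x)$ vary with $x$. Because the bound holds pointwise for every unit tangent pair, this causes no difficulty.
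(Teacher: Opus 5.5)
Your proposal is correct and follows essentially the same route as the paper. Both arguments bound $\chi_T$ and $\kappa_T$ through $\chi_0=(1-\eta_0^2)^{-1/2}$, dominate the transported directional Hessian and gradient data pointwise by $|\nabla_{\Sph}^2v|$ and $|\nabla_{\Sph}v|$, replace $h_{T,i}$ by $h_T$, and use $\mu_T\geq2$ for the $H^1$ scale. The only cosmetic difference is that you substitute into the expanded estimates of Theorem~\ref{thm:standard-directional-form}, whereas the paper inserts the same bounds on $\mathscr E_{ij}$ and $\mathscr G_i$ into Theorem~\ref{thm:final-local-anisotropic}; this amounts to the same argument.
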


\begin{proof}
We set
\begin{align*}
  \chi_0:=(1-\eta_0^2)^{-1/2}.
\end{align*}
By spherical locality,
\begin{align*}
  \chi_T\leq\chi_0,
\end{align*}
and, because
\begin{align*}
  \kappa_T
  =
  \frac{H_T\chi_T^2}{4R^2},
\end{align*}
one has
\begin{align*}
  \kappa_T
  \leq
  \frac{\chi_0^2}{4}
  \frac{H_T}{R^2}.
\end{align*}
Because the transported directions $e_\xi$ are unit tangent vectors,
\begin{align*}
  \mathscr H_{\xi,\eta}(v)
  &\leq
  \|\nabla_{\Sph}^2v\|_{L^2(K)}, \quad
  \mathscr P_\xi(v)
  \leq
  \|\nabla_{\Sph}v\|_{L^2(K)}.
\end{align*}
Therefore, \eqref{eq:directional-second-data-bound} gives
\begin{align}
  \mathscr E_{ij}(v;K)
  \leq
  \chi_T\|\nabla_{\Sph}^2v\|_{L^2(K)}
  +
  2\kappa_T\|\nabla_{\Sph}v\|_{L^2(K)}.
  \label{eq:proof-isotropic-E-bound}
\end{align}
Because $h_{T,i}\leq h_T$,
\begin{align*}
  \sum_{i,j=1}^2
  h_{T,i}h_{T,j}\mathscr E_{ij}(v;K)
  \leq
  4h_T^2
  \left(
    \chi_T\|\nabla_{\Sph}^2v\|_{L^2(K)}
    +
    2\kappa_T\|\nabla_{\Sph}v\|_{L^2(K)}
  \right).
\end{align*}
Combining this with \eqref{eq:local-anisotropic-L2}, $\chi_T\leq\chi_0$, and the preceding bound for $\kappa_T$ proves \eqref{eq:isotropic-local-L2}. Similarly, \eqref{eq:directional-H1-data-bound} yields
\begin{align*}
  \mathscr G_i(v;K)
  \leq
  \sqrt2
  \left(
    \chi_T\|\nabla_{\Sph}^2v\|_{L^2(K)}
    +
    2\kappa_T\|\nabla_{\Sph}v\|_{L^2(K)}
  \right),
  \quad i=1,2.
\end{align*}
Using
\begin{align*}
  h_{T,1}\leq h_T,
  \quad
  h_{T,2}\leq h_T,
  \quad
  \mu_T\geq2,
\end{align*}
we obtain
\begin{align*}
  \mu_Th_{T,1}\mathscr G_1(v;K)
  +2h_{T,2}\mathscr G_2(v;K)
  \leq
  2\sqrt2\,\mu_Th_T
  \left(
    \chi_T\|\nabla_{\Sph}^2v\|_{L^2(K)}
    +
    2\kappa_T\|\nabla_{\Sph}v\|_{L^2(K)}
  \right).
\end{align*}
Substitution into \eqref{eq:local-anisotropic-H1}, followed again by the locality bounds on $\chi_T$ and $\kappa_T$, proves \eqref{eq:isotropic-local-H1}.

Finally,
\begin{align*}
  \vartheta_T=\frac{H_T}{4R}\leq\eta_0
\end{align*}
implies
\begin{align*}
  \frac{H_T}{R^2}\leq\frac{4\eta_0}{R},
\end{align*}
which gives the stated simplified form of the lower-order $L^2$ term.
\end{proof}

\begin{corollary}[Standard local rates under chordal semi-regularity]
  \label{cor:standard-local-orders}
  Under the assumptions of Corollary~\ref{cor:locality-only-orders}, let $\gamma_0\geq2$ be fixed and suppose in addition that
  \begin{align*}
    \mu_T\leq\gamma_0.
  \end{align*}
  Then, there exists a constant $C=C(\gamma_0,\eta_0)$, independent of $R$, $h_T$, and the aspect ratio of $T$, such that
\begin{subequations} \label{eq:standard-local-rates}
\begin{align}
\displaystyle
\|v-I_Kv\|_{L^2(K)}
    &\leq C
    \left(
      h_T^2\|\nabla_{\Sph}^2v\|_{L^2(K)}
      +\frac{h_T^3}{R^2}\|\nabla_{\Sph}v\|_{L^2(K)}
    \right), \label{eq:standard-local-rates=a}
    \\
    |v-I_Kv|_{H^1(K)}
    &\leq C
    \left(
      h_T\|\nabla_{\Sph}^2v\|_{L^2(K)}
      +\frac{h_T^2}{R^2}\|\nabla_{\Sph}v\|_{L^2(K)}
    \right). \label{eq:standard-local-rates=b}
\end{align}
\end{subequations}
\end{corollary}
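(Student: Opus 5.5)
The plan is to derive the corollary directly from Corollary~\ref{cor:locality-only-orders} by eliminating the chordal quantities $\mu_T$ and $H_T$ with the assumption $\mu_T\leq\gamma_0$. The locality assumption $\vartheta_T\leq\eta_0$ is inherited, so \eqref{eq:isotropic-local-L2} and \eqref{eq:isotropic-local-H1} hold with a constant $C(\eta_0)$. The only remaining geometric quantities are the factor $\mu_T$ in the $H^1$ bound and the factor $H_T$ in both lower-order terms.

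First, I will use the identity $H_T=\mu_Th_T$ from \eqref{eq:chordal-geometric-parameters}. It gives $H_T\leq\gamma_0h_T$, and hence
\begin{align*}
  h_T^2\frac{H_T}{R^2}\leq\gamma_0\frac{h_T^3}{R^2},
  \quad
  \mu_Th_T\frac{H_T}{R^2}\leq\gamma_0^2\frac{h_T^2}{R^2}.
\end{align*}
Substituting the first inequality into \eqref{eq:isotropic-local-L2} yields \eqref{eq:standard-local-rates=a} with constant $C(\eta_0)\max\{1,\gamma_0\}$.

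For the $H^1$ bound, I will first replace the prefactor $\mu_Th_T$ multiplying $\|\nabla_{\Sph}^2v\|_{L^2(K)}$ by $\gamma_0h_T$. I then apply the second inequality to the gradient term in \eqref{eq:isotropic-local-H1}. This yields \eqref{eq:standard-local-rates=b} with constant $C(\eta_0)\gamma_0^2$, using $\gamma_0\geq2$ to absorb $\gamma_0$ into $\gamma_0^2$.

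The independence claims follow because $C(\eta_0)$ is already independent of $R$, $h_T$, $\mu_T$, and the aspect ratio, and the new factors depend only on $\gamma_0$. There is no genuine obstacle in this argument. The one step to check carefully is that $H_T$, and not only $h_T$, appears in the lower-order terms: without the identity $H_T=\mu_Th_T$, the $L^2$ term would only be $h_T^2H_T/R^2$ and would lose the cubic power. The chordal semi-regularity assumption is exactly what turns $H_T$ into $h_T$ there.
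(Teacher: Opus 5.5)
Your proposal is correct and is essentially the paper's own proof. Apply Corollary~\ref{cor:locality-only-orders}, use $H_T=\mu_Th_T\leq\gamma_0h_T$ in the lower-order terms, and use $\mu_Th_T\leq\gamma_0h_T$ and $\mu_Th_TH_T\leq\gamma_0^2h_T^2$ for the $H^1$ bound, absorbing the powers of $\gamma_0$ into $C(\gamma_0,\eta_0)$.
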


\begin{proof}
  Apply Corollary~\ref{cor:locality-only-orders}. Because
  \begin{align*}
    \mu_T\leq\gamma_0,
    \quad
    H_T=\mu_Th_T\leq\gamma_0h_T,
  \end{align*}
  the $L^2$ estimate follows immediately. For the $H^1$ estimate, one additionally uses
  \begin{align*}
    \mu_Th_T\leq\gamma_0h_T,
    \quad
    \mu_Th_TH_T
    \leq
    \gamma_0^2h_T^2.
  \end{align*}
  Absorbing the powers of $\gamma_0$ into $C=C(\gamma_0,\eta_0)$ proves \eqref{eq:standard-local-rates}.
\end{proof}

\begin{remark}[Roles of the geometric conditions]
The chordal factor $\mu_T$ appears explicitly in the $H^1$ interpolation scale and is the relevant shape parameter for a uniform standard $H^1$ bound. No uniform bound on $\mu_T$ is needed for the local $L^2$ estimate. By Corollary~\ref{cor:intrinsic-partII-geometric-interface}, the intrinsic condition $t_K^{\mathrm{sr}}\geq c_{\mathrm{sr}}>0$ gives the required uniform control of $\mu_T$.

The radial factor $\chi_T$ measures the distortion of the projection, and $\kappa_T=H_T\chi_T^2/(4R^2)$ measures its second-order contribution. On a fixed sphere, uniform chordal semi-regularity together with $\max_K h_{T_K}/R\to0$ gives $\max_K\chi_{T_K}\to1$ and $\max_K\kappa_{T_K}\to0$ by Corollary~\ref{cor:refining-locality}. The converse is false: refinement and radial locality alone do not control the maximum angle; see Subsection~\ref{subsec:maximum-angle-sharpness}.
\end{remark}

\section{Sharpness of the geometric mechanisms}
\label{sec:sharpness}
We use four families of exact spherical triangles to test the geometric factors in the local interpolation estimates. The first family becomes thin while its chordal maximum angle remains a right angle. The second becomes flat symmetrically, and the third becomes flat with unequal directional lengths. In both cases, the chordal maximum angle tends to $\pi$. All three refine with radial distortion tending to one. The fourth is a coarse family: its chordal triangles remain equilateral, while their supporting planes approach the sphere centre. Table~\ref{tab:sharpness-families} records the distinct roles of these examples.

\begin{table}[ht]
  \centering
  \small
  \setlength{\tabcolsep}{4pt}
  \renewcommand{\arraystretch}{1.15}
  \begin{tabular}{@{}>{\centering\arraybackslash}p{0.12\textwidth}>{\raggedright\arraybackslash}p{0.25\textwidth}>{\raggedright\arraybackslash}p{0.23\textwidth}>{\raggedright\arraybackslash}p{0.30\textwidth}@{}}
    \toprule
    Family & Geometry & Quantity tested & Sharpness conclusion \\
    \midrule
    Right
      & $\mu_T=2$, $\chi_T\to1$, and $h_{T,1}/h_{T,2}\to\infty$
      & directional scales versus the diameter
      & the directional $L^2$ and $H^1$ scales remain sharp, whereas diameter replacement loses anisotropic order \\
    Isosceles
      & $\chi_T\to1$ and $\mu_T\to\infty$
      & size and exponent of the maximum-angle factor
      & the factor $\mu_T$ is necessary in the $H^1$ scale, and an exponent below one fails \\
    Asymmetric
      & $\chi_T\to1$, $\mu_T\to\infty$, and $h_{T,1}/h_{T,2}\to\infty$
      & location of $\mu_T$
      & the factor $\mu_T$ cannot be moved to the shorter directional scale \\
    Equilateral
      & $\mu_T=4/\sqrt3$ and $\chi_T\to\infty$
      & upper $H^1$ radial-transfer factor
      & the exponent $1/2$ of $\chi_T$ is optimal \\
    \bottomrule
  \end{tabular}
  \caption{The four sharpness families and the geometric mechanisms they isolate.}
  \label{tab:sharpness-families}
\end{table}

Within each subsection, we write $T_\varepsilon$ and $K_\varepsilon$ for the chordal triangle and its radial image, and $p_{i,\varepsilon}$, $w_\varepsilon$, and $v_\varepsilon$ for the vertices and test functions of that family. The family is determined by the subsection, so no family superscript is needed. The notation $T_\varepsilon^{\mathrm{loc}}$ used in Section~\ref{sec:geometry} denotes the equilateral family of Subsection~\ref{subsec:radial-exponent-sharpness}. We use the abbreviations
\begin{align*}
  h_\varepsilon&:=h_{T_\varepsilon},
  \quad h_{i,\varepsilon}:=h_{T_\varepsilon,i},
  \quad \ell_{i,\varepsilon}:=\ell_{T_\varepsilon,i},
  \quad i=1,2,\\
  \beta_\varepsilon&:=\beta_{T_\varepsilon},
  \quad \mu_\varepsilon:=\mu_{T_\varepsilon},
  \quad \chi_\varepsilon:=\chi_{T_\varepsilon}.
\end{align*}
Here, $h_{1,\varepsilon}\geq h_{2,\varepsilon}$ are the two canonical directional lengths. We also reuse $\mathcal A_{0,\varepsilon}$ and $\mathcal A_{1,\varepsilon}$ for the $L^2$ and asymmetric $H^1$ directional scales; these quantities are defined separately for each family.

The first three families are analysed on their chordal cores, with $\chi_\varepsilon\to1$. The following lemma transfers their affine error ratios and directional scales to the sphere.

\begin{lemma}[Affine-to-spherical transfer for sharpness families]
  \label{lem:sharpness-transfer-principle}
  Let $T_\varepsilon$ be a chordal triangle with $d_{T_\varepsilon}>0$, set $K_\varepsilon:=\Psi_{T_\varepsilon}(T_\varepsilon)$, and let $w_\varepsilon$ be the restriction of a quadratic polynomial to $T_\varepsilon$. We define
  \begin{align*}
    v_\varepsilon
    :=w_\varepsilon\circ G_{T_\varepsilon},
    \quad
    \mathcal E_\varepsilon
    :=w_\varepsilon-I_{T_\varepsilon}w_\varepsilon.
  \end{align*}
  We set
  \begin{align*}
    U_\varepsilon:=U_{T_\varepsilon}.
  \end{align*}
  Then, $U_\varepsilon$ is an open neighbourhood of $K_\varepsilon$ in $\Sph_R^2$, and $v_\varepsilon$ admits the explicit $\mathcal C^\infty$ extension
  \begin{align*}
    \tilde v_\varepsilon(x)
    :=w_\varepsilon\!\left(
      \frac{d_{T_\varepsilon}}
           {\nu_{T_\varepsilon}\cdot x}\,x
    \right),
    \quad x\in U_\varepsilon.
  \end{align*}
  Furthermore,
  \begin{align}
    (v_\varepsilon-I_{K_\varepsilon}v_\varepsilon)
    \circ\Psi_{T_\varepsilon}
    =\mathcal E_\varepsilon.
    \label{eq:sharpness-common-commuting}
  \end{align}
  For any positive normalising scale $S_\varepsilon$,
  \begin{align}
    \chi_\varepsilon^{-1/2}
    \frac{\|\mathcal E_\varepsilon\|_{L^2(T_\varepsilon)}}{S_\varepsilon}
    &\leq
    \frac{\|v_\varepsilon-I_{K_\varepsilon}v_\varepsilon\|_{L^2(K_\varepsilon)}}
         {S_\varepsilon}
    \leq
    \chi_\varepsilon
    \frac{\|\mathcal E_\varepsilon\|_{L^2(T_\varepsilon)}}{S_\varepsilon},
    \label{eq:sharpness-common-L2-transfer}
    \\
    \chi_\varepsilon^{-1/2}
    \frac{|\mathcal E_\varepsilon|_{H^1(T_\varepsilon)}}{S_\varepsilon}
    &\leq
    \frac{|v_\varepsilon-I_{K_\varepsilon}v_\varepsilon|_{H^1(K_\varepsilon)}}
         {S_\varepsilon}
    \leq
    \chi_\varepsilon^{1/2}
    \frac{|\mathcal E_\varepsilon|_{H^1(T_\varepsilon)}}{S_\varepsilon}.
    \label{eq:sharpness-common-H1-transfer}
  \end{align}
  Furthermore, for unit vectors $\xi,\eta\in V_{T_\varepsilon}$,
  \begin{align}
    \|\mathscr Q_{\xi,\eta}v_\varepsilon\|_{0,K_\varepsilon;T_\varepsilon}
    &=
    \|\partial_\xi\partial_\eta w_\varepsilon\|_{L^2(T_\varepsilon)},
    \notag\\
    \left(
      \sum_{k=1}^2
      \|\mathscr Q_{\xi,q_{T_\varepsilon,k}}v_\varepsilon
      \|_{0,K_\varepsilon;T_\varepsilon}^2
    \right)^{1/2}
    &=
    |\partial_\xi w_\varepsilon|_{H^1(T_\varepsilon)}.
    \label{eq:sharpness-common-directional-transfer}
  \end{align}
  Therefore, any scale assembled from these affine directional norms is exactly the corresponding transported spherical scale. If $\chi_\varepsilon\to1$, any affine ratio with a finite positive limit has the same limit after lifting, and any positive affine asymptotic equivalence is preserved.
\end{lemma}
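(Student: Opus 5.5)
The proof assembles earlier results; the only new point is regularity of $v_\varepsilon$ through the explicit extension.

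\emph{Step 1: the extension.} By the remark after Lemma~\ref{lem:radial-basic}, the radial map is a smooth diffeomorphism from $\aff(T_\varepsilon)$ onto the open hemisphere $U_\varepsilon=\{x\in\Sph_R^2:\nu_{T_\varepsilon}\cdot x>0\}$, with inverse $x\mapsto d_{T_\varepsilon}x/(\nu_{T_\varepsilon}\cdot x)$. We have $K_\varepsilon\subset U_\varepsilon$ because $\nu_{T_\varepsilon}\cdot x=Rd_{T_\varepsilon}/|y|>0$ on $K_\varepsilon$. Hence $U_\varepsilon$ is an open neighbourhood of $K_\varepsilon$. The quadratic polynomial $w_\varepsilon$ is defined on all of $\aff(T_\varepsilon)$, so $\tilde v_\varepsilon$ is a composition of smooth maps on $U_\varepsilon$. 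By \eqref{eq:radial-inverse-map} it restricts to $w_\varepsilon\circ G_{T_\varepsilon}=v_\varepsilon$ on $K_\varepsilon$. In particular $v_\varepsilon\in\mathcal C^\infty(K_\varepsilon)\subset H^2(K_\varepsilon)$, so every earlier identity stated for $\mathcal C^2$ or $H^2$ data applies directly, without a density argument.

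\emph{Step 2: commuting identity and norm transfers.} Since $G_{T_\varepsilon}\circ\Psi_{T_\varepsilon}=\mathrm{id}$, we have $v_\varepsilon\circ\Psi_{T_\varepsilon}=w_\varepsilon$. Proposition~\ref{prop:interpolant-commuting}, specifically \eqref{eq:interpolation-error-pullback}, then gives \eqref{eq:sharpness-common-commuting}. Next apply Lemma~\ref{lem:sharp-norm-transfer} with $u:=v_\varepsilon-I_{K_\varepsilon}v_\varepsilon\in H^1(K_\varepsilon)$, whose pullback is $u_T=\mathcal E_\varepsilon$. The bounds \eqref{eq:L2-norm-transfer} and \eqref{eq:H1-seminorm-transfer} yield the two-sided inequalities. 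Dividing by $S_\varepsilon>0$ gives \eqref{eq:sharpness-common-L2-transfer} and \eqref{eq:sharpness-common-H1-transfer}.

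\emph{Step 3: directional data.} The first identity in \eqref{eq:sharpness-common-directional-transfer} is \eqref{eq:second-direction-transfer-identity} of Lemma~\ref{lem:second-direction-transfer}, applied with $v=v_\varepsilon$ and $w=w_\varepsilon$. For the second, apply the same identity with $\eta=q_{T_\varepsilon,k}$, square, and sum over $k$. Symmetry of the (constant) Hessian of $w_\varepsilon$ gives $\partial_\xi\partial_{q_k}w_\varepsilon=\partial_{q_k}\partial_\xi w_\varepsilon$. Because $\{q_{T_\varepsilon,1},q_{T_\varepsilon,2}\}$ is an orthonormal basis of $V_{T_\varepsilon}$, the sum equals $|\partial_\xi w_\varepsilon|_{H^1(T_\varepsilon)}^2$; this is the computation already done for \eqref{eq:local-directional-H1-identity}. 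Consequently any scale built from these affine norms, such as $\mathcal A_{0,\varepsilon}$ or $\mathcal A_{1,\varepsilon}$, coincides exactly with the transported spherical scale built from $\mathscr E_{ij}$ and $\mathscr G_i$.

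\emph{Step 4: limits.} Take $S_\varepsilon$ to be such a scale, so the affine ratio $\|\mathcal E_\varepsilon\|/S_\varepsilon$ or $|\mathcal E_\varepsilon|_{H^1}/S_\varepsilon$ tends to a finite positive limit $L$. The spherical ratio is sandwiched between $\chi_\varepsilon^{-1/2}$ and $\chi_\varepsilon$ (respectively $\chi_\varepsilon^{\pm1/2}$) times the affine ratio. If $\chi_\varepsilon\to1$, both bounds tend to $L$, and the squeeze theorem gives the same limit. The same sandwich shows that the ratio of spherical to affine errors tends to one, so positive asymptotic equivalences are preserved.

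\emph{Main obstacle.} The only delicate point is Step 1: one must check that $U_\varepsilon$ really contains the closed set $K_\varepsilon$, and that the extension is smooth up to the boundary of $K_\varepsilon$. This justifies using the pointwise identities of Lemma~\ref{lem:exact-directional-derivatives} on the closed element. Everything else is direct citation of earlier results.
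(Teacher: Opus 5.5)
Your proposal is correct and follows essentially the same route as the paper: the explicit smooth extension $\tilde v_\varepsilon$ on the open hemisphere $U_\varepsilon$, then Proposition~\ref{prop:interpolant-commuting}, Lemma~\ref{lem:sharp-norm-transfer}, the directional identities \eqref{eq:local-second-direction-identity} and \eqref{eq:local-directional-H1-identity}, and a squeeze argument as $\chi_\varepsilon\to1$. You also check explicitly that $K_\varepsilon\subset U_\varepsilon$, using $\nu_{T_\varepsilon}\cdot x=Rd_{T_\varepsilon}/|y|>0$; the paper leaves this containment implicit.
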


\begin{proof}
  Because $w_\varepsilon$ is the restriction of a quadratic polynomial to the supporting plane of $T_\varepsilon$, the same polynomial is defined smoothly on the whole supporting plane. For $x\in U_\varepsilon$, we set
  \begin{align*}
    \widetilde G_{T_\varepsilon}(x)
    :=\frac{d_{T_\varepsilon}}
            {\nu_{T_\varepsilon}\cdot x}\,x.
  \end{align*}
  Because $\nu_{T_\varepsilon}\cdot x>0$ on $U_\varepsilon$, this map is $\mathcal C^\infty$ there, and
  \begin{align*}
    \nu_{T_\varepsilon}\cdot
    \widetilde G_{T_\varepsilon}(x)
    =d_{T_\varepsilon},
  \end{align*}
  so its image lies in the supporting plane. On $K_\varepsilon$, $\widetilde G_{T_\varepsilon}$ agrees with the inverse radial map $G_{T_\varepsilon}$. Consequently, the extension defined in the lemma satisfies
  \begin{align*}
    \tilde v_\varepsilon
    =w_\varepsilon\circ\widetilde G_{T_\varepsilon}
    \in\mathcal C^\infty(U_\varepsilon),
    \quad
    \tilde v_\varepsilon|_{K_\varepsilon}=v_\varepsilon.
  \end{align*}
  Thus, the regularity asserted in the lemma follows from this explicit construction, without invoking a general extension theorem. Proposition~\ref{prop:interpolant-commuting} gives \eqref{eq:sharpness-common-commuting}. The two-sided estimates are \eqref{eq:L2-norm-transfer} and \eqref{eq:H1-seminorm-transfer} applied to that identity. Finally, \eqref{eq:local-second-direction-identity} and \eqref{eq:local-directional-H1-identity} give \eqref{eq:sharpness-common-directional-transfer}. Whenever $\|\mathcal E_\varepsilon\|_{L^2(T_\varepsilon)}>0$, \eqref{eq:sharpness-common-L2-transfer} with $S_\varepsilon=\|\mathcal E_\varepsilon\|_{L^2(T_\varepsilon)}$ gives
\begin{align*}
  \chi_\varepsilon^{-1/2}
  \leq
  \frac{\|v_\varepsilon-I_{K_\varepsilon}v_\varepsilon\|_{L^2(K_\varepsilon)}}
       {\|\mathcal E_\varepsilon\|_{L^2(T_\varepsilon)}}
  \leq
  \chi_\varepsilon.
\end{align*}
Likewise, whenever $|\mathcal E_\varepsilon|_{H^1(T_\varepsilon)}>0$, \eqref{eq:sharpness-common-H1-transfer} with $S_\varepsilon=|\mathcal E_\varepsilon|_{H^1(T_\varepsilon)}$ yields
\begin{align*}
  \chi_\varepsilon^{-1/2}
  \leq
  \frac{|v_\varepsilon-I_{K_\varepsilon}v_\varepsilon|_{H^1(K_\varepsilon)}}
       {|\mathcal E_\varepsilon|_{H^1(T_\varepsilon)}}
  \leq
  \chi_\varepsilon^{1/2}.
\end{align*}
Therefore, if $\chi_\varepsilon\to1$, both spherical-to-affine error quotients tend to one. It follows that an affine ratio with a finite positive limit has the same limit after lifting, and any positive affine asymptotic equivalence is preserved.
\end{proof}

\subsection{Pure anisotropy under uniform chordal semi-regularity}
The chordal triangles in this family are right triangles with one leg much shorter than the other. Their largest angle remains $\pi/2$, and their spherical images shrink towards the north pole. This example isolates the loss caused by replacing the two directional lengths with the diameter.
Let $e_1,e_2,e_3$ denote the standard basis of $\R^3$. Let $0<\varepsilon<1/2$ and set
\begin{align*}
  r_\varepsilon:=R\varepsilon,
  \quad
  d_\varepsilon:=R\sqrt{1-\varepsilon^2},
  \quad
  c_\varepsilon:=d_\varepsilon e_3.
\end{align*}
We define
\begin{align}
  p_{1,\varepsilon}
  &:=
  c_\varepsilon+r_\varepsilon
  (\cos\varepsilon\,e_1+\sin\varepsilon\,e_2),
  \quad
  p_{2,\varepsilon}
  :=
  c_\varepsilon-r_\varepsilon e_1,
  \quad
  p_{3,\varepsilon}
  :=c_\varepsilon+r_\varepsilon e_1,
  \label{eq:an-vertices}
\end{align}
Because $r_\varepsilon^2+d_\varepsilon^2=R^2$, all three vertices lie on $\Sph_R^2$. Because $\sin\varepsilon>0$, they are non-collinear. Let $T_\varepsilon$ be their chordal triangle. They lie in the plane $\{x_3=d_\varepsilon\}$, so
\begin{align*}
  d_{T_\varepsilon}
  =d_\varepsilon>0.
\end{align*}
The edge $p_{2,\varepsilon}p_{3,\varepsilon}$ is a diameter of the common planar circumcircle, so the angle at $p_{1,\varepsilon}$ is $\pi/2$. Under the canonical labelling,
\begin{align}
  h_\varepsilon&=2r_\varepsilon,
  \quad h_{1,\varepsilon} =2r_\varepsilon\cos\frac\varepsilon2,
  \quad
  h_{2,\varepsilon} =2r_\varepsilon\sin\frac\varepsilon2, \notag\\
  \mu_\varepsilon&=2,
  \quad
  \vartheta_{T_\varepsilon} =\varepsilon, \quad
  \chi_\varepsilon =(1-\varepsilon^2)^{-1/2}.
  \label{eq:an-geometry}
\end{align}
Consequently, $h_\varepsilon\to0$, $\chi_\varepsilon\to1$, and the side-length aspect ratio
\begin{align}
  \rho_\varepsilon
  :=\frac{h_{1,\varepsilon}}{h_{2,\varepsilon}}
  =\cot\frac\varepsilon2
  \longrightarrow\infty.
  \label{eq:an-aspect-ratio}
\end{align}
Thus, $\{T_\varepsilon\}_{\varepsilon\downarrow0}$ is a uniformly chordally semi-regular refining family for which the radial distortion tends to one, while the side-length aspect ratio diverges. Let $\ell_{1,\varepsilon}$ and $\ell_{2,\varepsilon}$ be the unit edge directions from $p_{1,\varepsilon}$. They are orthogonal, and
\begin{align*}
  T_\varepsilon
  =\left\{
    p_{1,\varepsilon}
    +{x_1}\ell_{1,\varepsilon}
    +{x_2}\ell_{2,\varepsilon}: \
    {x_1},{x_2}\geq0,\quad
    \frac{{x_1}}{h_{1,\varepsilon}}+
    \frac{{x_2}}{h_{2,\varepsilon}}\leq1
  \right\}.
\end{align*}
On this triangle set $w_\varepsilon({x_1},{x_2}):={x_2}^2$. Its nodal interpolant is $h_{2,\varepsilon}{x_2}$, so
\begin{align}
  \mathcal E_\varepsilon
  :=w_\varepsilon
    -I_{T_\varepsilon}w_\varepsilon
  ={x_2}({x_2}-h_{2,\varepsilon}).
  \label{eq:an-test-error}
\end{align}

\begin{lemma}[Exact directional scaling on the right-triangle family]
  \label{lem:pure-anisotropy-affine}
 We set $A_\varepsilon:=|T_\varepsilon|_2 =h_{1,\varepsilon}h_{2,\varepsilon}/2$ and we define
  \begin{align*}
    \mathcal A_{0,\varepsilon}
    &:=
    \sum_{i,j=1}^2h_{i,\varepsilon}h_{j,\varepsilon}
    \|\partial_{\ell_{i,\varepsilon}}
      \partial_{\ell_{j,\varepsilon}}
      w_\varepsilon\|_{L^2(T_\varepsilon)},
    \\
    \mathcal A_{1,\varepsilon}
    &:=
    \mu_\varepsilon h_{1,\varepsilon}
    |\partial_{\ell_{1,\varepsilon}}
      w_\varepsilon|_{H^1(T_\varepsilon)}
    +2h_{2,\varepsilon}
    |\partial_{\ell_{2,\varepsilon}}
      w_\varepsilon|_{H^1(T_\varepsilon)}.
  \end{align*}
  Then,
  \begin{align}
    \|\mathcal E_\varepsilon\|_{L^2(T_\varepsilon)}
    &=h_{2,\varepsilon}^2\sqrt{\frac{A_\varepsilon}{30}},
    \quad
    |\mathcal E_\varepsilon|_{H^1(T_\varepsilon)}
    =h_{2,\varepsilon}\sqrt{\frac{A_\varepsilon}{3}},
    \notag\\
    \mathcal A_{0,\varepsilon}
    &=2h_{2,\varepsilon}^2\sqrt{A_\varepsilon},
    \quad
    \mathcal A_{1,\varepsilon}
    =4h_{2,\varepsilon}\sqrt{A_\varepsilon}.
    \label{eq:an-errors-and-scales}
  \end{align}
  Consequently,
  \begin{align}
    \frac{\|\mathcal E_\varepsilon\|_{L^2(T_\varepsilon)}}
         {\mathcal A_{0,\varepsilon}}
    =\frac1{\sqrt{120}},
    \quad
    \frac{|\mathcal E_\varepsilon|_{H^1(T_\varepsilon)}}
         {\mathcal A_{1,\varepsilon}}
    =\frac1{4\sqrt3}.
    \label{eq:an-sharp-ratios}
  \end{align}
  Furthermore,
  \begin{align}
    \frac{h_\varepsilon^2
      \|D_T^2w_\varepsilon\|_{L^2(T_\varepsilon)}}
      {\mathcal A_{0,\varepsilon}}
    &=\sec^2\frac\varepsilon2
      (\rho_\varepsilon)^2,
    \quad
    \frac{h_\varepsilon
      \|D_T^2w_\varepsilon\|_{L^2(T_\varepsilon)}}
      {\mathcal A_{1,\varepsilon}}
    =\frac12\sec\frac\varepsilon2
      \rho_\varepsilon.
    \label{eq:an-isotropic-loss}
  \end{align}
\end{lemma}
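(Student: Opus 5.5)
Everything reduces to explicit integrals of polynomials over the right triangle written in the orthogonal edge coordinates $(x_1,x_2)$, where $T_\varepsilon=\{x_1,x_2\ge0,\ x_1/h_{1,\varepsilon}+x_2/h_{2,\varepsilon}\le1\}$. Because $\ell_{1,\varepsilon}\perp\ell_{2,\varepsilon}$ are unit vectors, $(x_1,x_2)$ are isometric coordinates on the chordal plane. Hence $\partial_{\ell_{i,\varepsilon}}=\partial_{x_i}$, and $D_T^2w_\varepsilon$ has Frobenius norm equal to that of the coordinate Hessian. First I will check that $I_{T_\varepsilon}w_\varepsilon=h_{2,\varepsilon}x_2$. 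This is affine and agrees with $x_2^2$ at the three vertices $(0,0)$, $(h_{1,\varepsilon},0)$ and $(0,h_{2,\varepsilon})$, so \eqref{eq:an-test-error} follows by unisolvence.

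For the error norms I will use the one-dimensional fibration in $x_2$. The horizontal slice at height $x_2$ has length $h_{1,\varepsilon}(1-x_2/h_{2,\varepsilon})$, so for any $f=f(x_2)$,
\begin{align*}
\int_{T_\varepsilon}f\,\diff s=h_{1,\varepsilon}\int_0^{h_{2,\varepsilon}}f(x_2)\Bigl(1-\frac{x_2}{h_{2,\varepsilon}}\Bigr)\diff x_2 .
\end{align*}
Substituting $x_2=h_{2,\varepsilon}t$ turns these into Beta integrals. For the $L^2$ error, $\int_0^1t^2(1-t)^3\,dt=1/60$, so $\|\mathcal E_\varepsilon\|^2=h_{1,\varepsilon}h_{2,\varepsilon}^5/60=h_{2,\varepsilon}^4A_\varepsilon/30$. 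Since $\nabla\mathcal E_\varepsilon=(0,2x_2-h_{2,\varepsilon})$, the $H^1$ error uses $\int_0^1(2t-1)^2(1-t)\,dt=1/6$, which gives $h_{2,\varepsilon}^2A_\varepsilon/3$.

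For the directional scales, the only nonzero second derivative is $\partial_{x_2}^2w_\varepsilon=2$. Thus $\mathcal A_{0,\varepsilon}=h_{2,\varepsilon}^2\cdot2\sqrt{A_\varepsilon}$. Next, $\partial_{\ell_1}w_\varepsilon=0$, and $\partial_{\ell_2}w_\varepsilon=2x_2$ has $H^1$ seminorm $2\sqrt{A_\varepsilon}$. With $\mu_\varepsilon=2$ this gives $\mathcal A_{1,\varepsilon}=4h_{2,\varepsilon}\sqrt{A_\varepsilon}$. Taking quotients yields \eqref{eq:an-sharp-ratios}: $1/\sqrt{120}$ from $\sqrt{1/30}/2$, and $1/(4\sqrt3)$ from $\sqrt{1/3}/4$.

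For \eqref{eq:an-isotropic-loss} I use $\|D_T^2w_\varepsilon\|=2\sqrt{A_\varepsilon}$, so the two quotients are $h_\varepsilon^2/h_{2,\varepsilon}^2$ and $h_\varepsilon/(2h_{2,\varepsilon})$. By \eqref{eq:an-geometry}, $h_\varepsilon/h_{2,\varepsilon}=1/\sin(\varepsilon/2)=\sec(\varepsilon/2)\cot(\varepsilon/2)=\sec(\varepsilon/2)\rho_\varepsilon$, and squaring gives the first quotient.

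The only step needing care is confirming the geometry in \eqref{eq:an-geometry}: the edge lengths $h_{1,\varepsilon}=|p_{1,\varepsilon}-p_{2,\varepsilon}|=2r_\varepsilon\cos(\varepsilon/2)$ and $h_{2,\varepsilon}=2r_\varepsilon\sin(\varepsilon/2)$ (chords subtending central angles $\pi-\varepsilon$ and $\varepsilon$ in the circle of radius $r_\varepsilon$), and that the labelling is canonical. I will take these as given, since they are stated before the lemma. Everything else is routine Beta-integral bookkeeping.
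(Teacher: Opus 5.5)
Your proposal is correct and follows the paper's proof. Like the paper, you integrate $\mathcal E_\varepsilon$ and its gradient directly over the right triangle in the orthogonal edge coordinates; only $\partial_{\ell_{2,\varepsilon}}^2w_\varepsilon=2$ survives in the directional scales, and $h_\varepsilon/h_{2,\varepsilon}=\sec(\varepsilon/2)\rho_\varepsilon$ gives the isotropic-loss ratios. Your Beta-integral values $1/60$ and $1/6$ reproduce exactly the paper's $h_{1,\varepsilon}h_{2,\varepsilon}^5/60$ and $h_{1,\varepsilon}h_{2,\varepsilon}^3/6$.

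One small remark: the value $\mu_\varepsilon=2$ plays no role in $\mathcal A_{1,\varepsilon}$. The $\mu_\varepsilon$-term is multiplied by $|\partial_{\ell_{1,\varepsilon}}w_\varepsilon|_{H^1(T_\varepsilon)}=0$, and the factor $4$ comes from the fixed coefficient $2$ times $2\sqrt{A_\varepsilon}$.
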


\begin{proof}
  Integrating \eqref{eq:an-test-error} and its gradient $(2{x_2}-h_{2,\varepsilon})\ell_{2,\varepsilon}$ over the right triangle gives
  \begin{align*}
    \|\mathcal E_\varepsilon\|_{L^2(T_\varepsilon)}^2
    =\frac{h_{1,\varepsilon}h_{2,\varepsilon}^5}{60},
    \quad
    |\mathcal E_\varepsilon|_{H^1(T_\varepsilon)}^2
    =\frac{h_{1,\varepsilon}h_{2,\varepsilon}^3}{6}.
  \end{align*}
  Only $\partial_{\ell_{2,\varepsilon}}^2 w_\varepsilon=2$ is non-zero, and $|\partial_{\ell_{2,\varepsilon}} w_\varepsilon|_{H^1(T_\varepsilon)}=2\sqrt{A_\varepsilon}$. This proves \eqref{eq:an-errors-and-scales} and \eqref{eq:an-sharp-ratios}. Finally, $\|D_T^2w_\varepsilon\|_{L^2(T_\varepsilon)} =2\sqrt{A_\varepsilon}$ and $h_\varepsilon/h_{2,\varepsilon} =\sec(\varepsilon/2)\rho_\varepsilon$, which gives \eqref{eq:an-isotropic-loss}.
\end{proof}

We set $K_\varepsilon:= \Psi_{T_\varepsilon}(T_\varepsilon)$ and $v_\varepsilon:=w_\varepsilon \circ G_{T_\varepsilon}$.  From Lemma~\ref{lem:sharpness-transfer-principle}, the two normalising quantities in Lemma~\ref{lem:pure-anisotropy-affine} are exactly the corresponding transported spherical directional scales. Because $\chi_\varepsilon\to1$,
\begin{align}
  \frac{\|v_\varepsilon
      -I_{K_\varepsilon}v_\varepsilon\|_{L^2(K_\varepsilon)}}
       {\mathcal A_{0,\varepsilon}}
  &\longrightarrow\frac1{\sqrt{120}},
  \quad
  \frac{|v_\varepsilon
      -I_{K_\varepsilon}v_\varepsilon|_{H^1(K_\varepsilon)}}
       {\mathcal A_{1,\varepsilon}}
  \longrightarrow\frac1{4\sqrt3}.
  \label{eq:an-spherical-limits}
\end{align}
The loss also occurs in the intrinsic isotropic bounds. Lemma~\ref{lem:first-direction-transfer} and the bounds on the singular values of $D\Psi_{T_\varepsilon}$ give
\begin{align*}
  \|\nabla_{\Sph}v_\varepsilon\|_{L^2(K_\varepsilon)}
  &=(1+O(\varepsilon^2))
    \|\nabla_Tw_\varepsilon\|_{L^2(T_\varepsilon)},\\
  \|\nabla_Tw_\varepsilon\|_{L^2(T_\varepsilon)}
  &\leq h_{2,\varepsilon}
    \|D_T^2w_\varepsilon\|_{L^2(T_\varepsilon)}.
\end{align*}
Since $H_{T_\varepsilon}=4R\varepsilon$ and $h_{2,\varepsilon}=O(R\varepsilon^2)$,
\begin{align*}
  \frac{H_{T_\varepsilon}}{R^2}
  \|\nabla_{\Sph}v_\varepsilon\|_{L^2(K_\varepsilon)}
  =O(\varepsilon^3)
    \|D_T^2w_\varepsilon\|_{L^2(T_\varepsilon)}.
\end{align*}
Lemma~\ref{lem:exact-directional-derivatives} and $\chi_\varepsilon=1+O(\varepsilon^2)$ then give
\begin{align*}
  \|\nabla_{\Sph}^2v_\varepsilon\|_{L^2(K_\varepsilon)}
  =(1+O(\varepsilon^2))
    \|D_T^2w_\varepsilon\|_{L^2(T_\varepsilon)}.
\end{align*}
Therefore, the scales on the right-hand sides of \eqref{eq:isotropic-local-L2}--\eqref{eq:isotropic-local-H1} exceed the actual spherical errors by factors of exact order $\rho_\varepsilon^2$ and $\rho_\varepsilon$, respectively. The directional scales retain the correct order as the aspect ratio diverges.

\subsection{Maximum-angle degeneration}
\label{subsec:maximum-angle-sharpness}
The chordal triangles are isosceles, and their apex approaches the midpoint of the base relative to the base length. The two directional lengths remain equal, but the apex angle tends to $\pi$. Their spherical images again refine with $\chi_\varepsilon\to1$. This example tests the size and exponent of the maximum-angle factor. We define a refining family that is not uniformly semi-regular as follows. For $0<\varepsilon<1/2$, we set
\begin{align*}
  r_\varepsilon:=R\varepsilon,
  \quad d_\varepsilon:=R\sqrt{1-\varepsilon^2},
  \quad \delta_\varepsilon:=\varepsilon,
  \quad b_1:=(0,1,0)^{\top},
  \quad b_2:=(1,0,0)^{\top},
\end{align*}
and
\begin{align}
  \lambda_\varepsilon
  :=r_\varepsilon\sin\delta_\varepsilon,
  \quad
  \zeta_\varepsilon
  :=r_\varepsilon(1-\cos\delta_\varepsilon),
  \quad
  m_\varepsilon
  :=(r_\varepsilon\cos\delta_\varepsilon,0,d_\varepsilon)^{\top}.
  \label{eq:ma-coordinate-scales}
\end{align}
Furthermore,
\begin{align*}
  p_{1,\varepsilon}
  &:=m_\varepsilon+\zeta_\varepsilon b_2,
  \quad
  p_{2,\varepsilon}
  :=m_\varepsilon-\lambda_\varepsilon b_1,
  \quad
  p_{3,\varepsilon}
  :=m_\varepsilon+\lambda_\varepsilon b_1,
\end{align*}
and
\begin{align}
  T_\varepsilon
  :=\conv\bigl\{
    p_{1,\varepsilon},
    p_{2,\varepsilon},
    p_{3,\varepsilon}
  \bigr\}.
  \label{eq:ma-family-definition}
\end{align}
Because $\lambda_\varepsilon>0$ and $\zeta_\varepsilon>0$, these vertices are non-collinear. Furthermore,
\begin{align*}
  r_\varepsilon\cos\delta_\varepsilon+\zeta_\varepsilon
  =r_\varepsilon,
  \quad
  \lambda_\varepsilon=r_\varepsilon\sin\delta_\varepsilon,
\end{align*}
and therefore
\begin{align*}
  |p_{i,\varepsilon}|^2
  =r_\varepsilon^2+d_\varepsilon^2
  =R^2,
  \quad i=1,2,3.
\end{align*}
Thus, all three vertices lie on $\Sph_R^2$.  Their support plane is $x_3=d_\varepsilon$, with
\begin{align*}
  \nu_{T_\varepsilon}=(0,0,1)^{\top},
  \quad
  d_{T_\varepsilon}=d_\varepsilon>0,
\end{align*}
and $b_1,b_2$ form an orthonormal basis of
\begin{align*}
  V_{T_\varepsilon}
  =\{\xi\in\R^3:\xi_3=0\}.
\end{align*}
In the corresponding $(b_1,b_2)$ coordinates, the apex $p_{1,\varepsilon}$ and the two base vertices are $(0,\zeta_\varepsilon)^{\top}$, $(-\lambda_\varepsilon,0)^{\top}$, and $(\lambda_\varepsilon,0)^{\top}$, respectively, relative to $m_\varepsilon$. The two edges issuing from the apex have common length
\begin{align*}
  L_\varepsilon
  :=\sqrt{\lambda_\varepsilon^2+\zeta_\varepsilon^2}
  =2r_\varepsilon\sin\frac{\delta_\varepsilon}{2}.
\end{align*}
Furthermore,
\begin{align*}
  2\lambda_\varepsilon
  =2L_\varepsilon\cos\frac{\delta_\varepsilon}{2}
  >L_\varepsilon,
\end{align*}
so the base is the longest edge. If $\beta_\varepsilon$ denotes the opposite angle, then
\begin{align*}
  \cos\beta_\varepsilon
  =\frac{\zeta_\varepsilon^2-\lambda_\varepsilon^2}
  {\zeta_\varepsilon^2+\lambda_\varepsilon^2}
  =-\cos\delta_\varepsilon,
\end{align*}
so $\beta_\varepsilon =\pi-\delta_\varepsilon$. Consequently, the above labelling agrees with Convention~\ref{conv:chordal-labelling}, and
\begin{align}
  h_{1,\varepsilon}
   &=h_{2,\varepsilon} =L_\varepsilon,
  \quad
  h_\varepsilon=2\lambda_\varepsilon,
  \quad
  \mu_\varepsilon
  =\frac2{\sin\delta_\varepsilon},
  \quad
  H_{T_\varepsilon}=4R\varepsilon,
  \quad
  \vartheta_{T_\varepsilon}=\varepsilon,
  \notag\\
  \frac{\lambda_\varepsilon}{\zeta_\varepsilon}
  &=\cot\frac{\delta_\varepsilon}{2},
  \quad
  \chi_\varepsilon
  =(1-\varepsilon^2)^{-1/2}.
  \label{eq:ma-geometry}
\end{align}
Thus, $h_\varepsilon\to0$, $\mu_\varepsilon\to\infty$, $\vartheta_{T_\varepsilon}\to0$, and $\chi_\varepsilon\to1$. By the canonical labelling, the unit directions $\ell_{i,\varepsilon}$ point from the apex to the two base vertices and satisfy
\begin{align}
  \ell_{1,\varepsilon}
  =\frac{-\lambda_\varepsilon b_1-\zeta_\varepsilon b_2}{L_\varepsilon},
  \quad
  \ell_{2,\varepsilon}
  =\frac{\lambda_\varepsilon b_1-\zeta_\varepsilon b_2}{L_\varepsilon}.
  \label{eq:ma-directions}
\end{align}
For $y=m_\varepsilon+\sigma b_1+\tau b_2$, we define $w_\varepsilon(y):=\sigma^2$. Because its base values are $\lambda_\varepsilon^2$ and its apex value is zero,
\begin{align}
  \mathcal E_\varepsilon
  :=w_\varepsilon
    -I_{T_\varepsilon}w_\varepsilon
  =\sigma^2-\lambda_\varepsilon^2
   +\frac{\lambda_\varepsilon^2}{\zeta_\varepsilon}\tau.
  \label{eq:ma-test-error}
\end{align}

\begin{lemma}[Exact maximum-angle error ratios]
  \label{lem:maximum-angle-affine}
  We define
  \begin{align*}
    \mathcal A_{0,\varepsilon}
    &:=
    \sum_{i,j=1}^2L_\varepsilon^2
    \|\partial_{\ell_{i,\varepsilon}}
      \partial_{\ell_{j,\varepsilon}}
      w_\varepsilon\|_{L^2(T_\varepsilon)},
    \quad
    \mathcal U_{1,\varepsilon}
    :=
    \sum_{i=1}^2L_\varepsilon
    |\partial_{\ell_{i,\varepsilon}}
      w_\varepsilon|_{H^1(T_\varepsilon)},
    \\
    \mathcal A_{1,\varepsilon}
    &:=
    \mu_\varepsilon L_\varepsilon
    |\partial_{\ell_{1,\varepsilon}}
      w_\varepsilon|_{H^1(T_\varepsilon)}
    +2L_\varepsilon
    |\partial_{\ell_{2,\varepsilon}}
      w_\varepsilon|_{H^1(T_\varepsilon)}.
  \end{align*}
  Then,
  \begin{align}
    \|\mathcal E_\varepsilon\|_{L^2(T_\varepsilon)}^2
    &=\frac3{10}\lambda_\varepsilon^5\zeta_\varepsilon,
    \quad
    |\mathcal E_\varepsilon|_{H^1(T_\varepsilon)}^2
    =\frac{\lambda_\varepsilon^5}{\zeta_\varepsilon}
      +\frac23\lambda_\varepsilon^3\zeta_\varepsilon,
    \notag \\
    \mathcal A_{0,\varepsilon}
    &=8\lambda_\varepsilon^2\sqrt{\lambda_\varepsilon\zeta_\varepsilon},
    \quad
    \mathcal U_{1,\varepsilon}
    =4\lambda_\varepsilon\sqrt{\lambda_\varepsilon\zeta_\varepsilon},
    \notag\\
    \mathcal A_{1,\varepsilon}
    &=2\lambda_\varepsilon\sqrt{\lambda_\varepsilon\zeta_\varepsilon}
      (\mu_\varepsilon+2).
    \label{eq:ma-errors-and-scales}
  \end{align}
  Consequently,
  \begin{align}
    \frac{\|\mathcal E_\varepsilon\|_{L^2(T_\varepsilon)}}
         {\mathcal A_{0,\varepsilon}}
    &=\frac18\sqrt{\frac3{10}},
    \quad
    \frac{|\mathcal E_\varepsilon|_{H^1(T_\varepsilon)}}
         {\mathcal U_{1,\varepsilon}}
    =\frac14\sqrt{\cot^2\frac{\delta_\varepsilon}{2}+\frac23}
      \sim\frac14\mu_\varepsilon,
    \notag\\
    \frac{|\mathcal E_\varepsilon|_{H^1(T_\varepsilon)}}
         {\mathcal A_{1,\varepsilon}}
    &=\frac{\sqrt{\cot^2(\delta_\varepsilon/2)+2/3}}
      {2(\mu_\varepsilon+2)}
      \longrightarrow\frac12.
    \label{eq:ma-ratios}
  \end{align}
\end{lemma}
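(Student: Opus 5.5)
Everything reduces to explicit computations in the orthonormal coordinates $(\sigma,\tau)$ of $V_{T_\varepsilon}$ with origin $m_\varepsilon$. In these coordinates the triangle is
\begin{align*}
  T_\varepsilon=\{(\sigma,\tau):0\le\tau\le\zeta_\varepsilon,\ |\sigma|\le\lambda_\varepsilon(1-\tau/\zeta_\varepsilon)\},
\end{align*}
so its area is $\lambda_\varepsilon\zeta_\varepsilon$.

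\textbf{Error norms.} By \eqref{eq:ma-test-error} the error is $\mathcal E_\varepsilon=\sigma^2-\lambda_\varepsilon^2+(\lambda_\varepsilon^2/\zeta_\varepsilon)\tau$. Substituting $\tau=\zeta_\varepsilon(1-s)$ with $s\in[0,1]$, the cross-section at level $s$ is $|\sigma|\le\lambda_\varepsilon s$, and there
\begin{align*}
  \mathcal E_\varepsilon=\sigma^2-\lambda_\varepsilon^2 s.
\end{align*}
Hence $\|\mathcal E_\varepsilon\|_{L^2}^2=\zeta_\varepsilon\int_0^1\int_{-\lambda_\varepsilon s}^{\lambda_\varepsilon s}(\sigma^2-\lambda_\varepsilon^2s)^2\,\diff\sigma\,\diff s$. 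Writing $\sigma=\lambda_\varepsilon s u$ with $u\in[-1,1]$ turns this into a product of elementary polynomial integrals. I expect it to yield $\tfrac{3}{10}\lambda_\varepsilon^5\zeta_\varepsilon$, and I will check that constant carefully, since the $s^2$ and $s^3$ terms must combine correctly.

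The gradient is $\nabla\mathcal E_\varepsilon=(2\sigma,\lambda_\varepsilon^2/\zeta_\varepsilon)$. Using $\int_{T_\varepsilon}\sigma^2=\lambda_\varepsilon^3\zeta_\varepsilon/6$ and the area $\lambda_\varepsilon\zeta_\varepsilon$, we get
\begin{align*}
  |\mathcal E_\varepsilon|_{H^1}^2=\tfrac23\lambda_\varepsilon^3\zeta_\varepsilon+\frac{\lambda_\varepsilon^5}{\zeta_\varepsilon}.
\end{align*}

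\textbf{Directional scales.} The Hessian of $w_\varepsilon$ is the constant $2\,b_1b_1^\top$, so $\partial_{\ell_i}\partial_{\ell_j}w_\varepsilon=2(\ell_i\cdot b_1)(\ell_j\cdot b_1)$. By \eqref{eq:ma-directions}, $(\ell_i\cdot b_1)^2=\lambda_\varepsilon^2/L_\varepsilon^2$. Summing the four terms, each multiplied by $L_\varepsilon^2\sqrt{\lambda_\varepsilon\zeta_\varepsilon}$, gives $\mathcal A_{0,\varepsilon}=8\lambda_\varepsilon^2\sqrt{\lambda_\varepsilon\zeta_\varepsilon}$.

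Next, $\nabla\partial_{\ell_i}w_\varepsilon=2(\ell_i\cdot b_1)b_1$ has norm $2\lambda_\varepsilon/L_\varepsilon$. Therefore $L_\varepsilon|\partial_{\ell_i}w_\varepsilon|_{H^1}=2\lambda_\varepsilon\sqrt{\lambda_\varepsilon\zeta_\varepsilon}$ for $i=1,2$. This gives $\mathcal U_{1,\varepsilon}$ and $\mathcal A_{1,\varepsilon}=2\lambda_\varepsilon\sqrt{\lambda_\varepsilon\zeta_\varepsilon}(\mu_\varepsilon+2)$.

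\textbf{Ratios.} The $L^2$ ratio is immediate. For the $H^1$ ratios, divide $|\mathcal E_\varepsilon|_{H^1}^2$ by $\lambda_\varepsilon^3\zeta_\varepsilon$ to get $\cot^2(\delta_\varepsilon/2)+2/3$, using $\lambda_\varepsilon/\zeta_\varepsilon=\cot(\delta_\varepsilon/2)$ from \eqref{eq:ma-geometry}.

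For the asymptotics, $\mu_\varepsilon=2/\sin\delta_\varepsilon$ and $\cot(\delta_\varepsilon/2)=(1+\cos\delta_\varepsilon)/\sin\delta_\varepsilon$. Hence
\begin{align*}
  \cot(\delta_\varepsilon/2)=\tfrac{\mu_\varepsilon}{2}(1+\cos\delta_\varepsilon)\sim\mu_\varepsilon.
\end{align*}
This gives the $\sim\tfrac14\mu_\varepsilon$ behaviour of the ratio to $\mathcal U_{1,\varepsilon}$ and the limit $1/2$ of the ratio to $\mathcal A_{1,\varepsilon}$.

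The only real obstacle is bookkeeping the constant in the $L^2$ integral; everything else is direct substitution.
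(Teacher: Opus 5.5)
Your proposal is correct and follows essentially the same route as the paper: direct integration over $0\le\tau\le\zeta_\varepsilon$, $|\sigma|\le\lambda_\varepsilon(1-\tau/\zeta_\varepsilon)$, the constant Hessian $2b_1\otimes b_1$ combined with \eqref{eq:ma-directions} for the directional scales, and $\lambda_\varepsilon/\zeta_\varepsilon=\cot(\delta_\varepsilon/2)$ with $\cot(\delta_\varepsilon/2)/\mu_\varepsilon=\cos^2(\delta_\varepsilon/2)\to1$ (the same identity as your $\tfrac{\mu_\varepsilon}{2}(1+\cos\delta_\varepsilon)$) for the ratios. The constant you left to check is indeed $\tfrac{3}{10}$: after $\sigma=\lambda_\varepsilon s u$ the integral becomes $\lambda_\varepsilon^5\zeta_\varepsilon\int_0^1 s^3\bigl(\tfrac25 s^2-\tfrac43 s+2\bigr)\,\diff s=\tfrac{3}{10}\lambda_\varepsilon^5\zeta_\varepsilon$, which is a sum of products of elementary integrals rather than a single product.
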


\begin{proof}
  The coordinate domain is $0\leq \tau\leq\zeta_\varepsilon$ and $|\sigma|\leq\lambda_\varepsilon(1-\tau/\zeta_\varepsilon)$. Direct integration of \eqref{eq:ma-test-error} and $\nabla_T\mathcal E_\varepsilon =2\sigma b_1+(\lambda_\varepsilon^2/\zeta_\varepsilon)b_2$ gives the first line of \eqref{eq:ma-errors-and-scales}. Because $D_T^2w_\varepsilon=2b_1\otimes b_1$, \eqref{eq:ma-directions} and $|T_\varepsilon|_2 =\lambda_\varepsilon\zeta_\varepsilon$ give
  \begin{align*}
    |\partial_{\ell_{i,\varepsilon}}
      \partial_{\ell_{j,\varepsilon}}
      w_\varepsilon|
    =\frac{2\lambda_\varepsilon^2}{L_\varepsilon^2},
    \quad
    |\partial_{\ell_{i,\varepsilon}}
      w_\varepsilon|_{H^1(T_\varepsilon)}
    =\frac{2\lambda_\varepsilon}{L_\varepsilon}
      \sqrt{\lambda_\varepsilon\zeta_\varepsilon}.
  \end{align*}
  This proves the remaining scale identities. Division gives \eqref{eq:ma-ratios}, because $\lambda_\varepsilon/\zeta_\varepsilon =\cot(\delta_\varepsilon/2)$ and
  \begin{align*}
    \frac{\cot(\delta_\varepsilon/2)}
         {\mu_\varepsilon}
    =\cos^2\frac{\delta_\varepsilon}{2}\longrightarrow1.
  \end{align*}
\end{proof}

We set $K_\varepsilon :=\Psi_{T_\varepsilon}(T_\varepsilon)$ and $v_\varepsilon :=w_\varepsilon\circ G_{T_\varepsilon}$. From \eqref{eq:sharpness-common-directional-transfer}, the three affine scales above are exactly their transported spherical counterparts.

\begin{theorem}[Sharp maximum-angle scaling]
  \label{thm:sharpness-semiregularity}
  As $\varepsilon\downarrow0$,
  \begin{align}
    \frac{\|v_\varepsilon
      -I_{K_\varepsilon}v_\varepsilon\|_{L^2(K_\varepsilon)}}
      {\mathcal A_{0,\varepsilon}}
    &\longrightarrow\frac18\sqrt{\frac3{10}},
    \quad
    \frac{|v_\varepsilon
      -I_{K_\varepsilon}v_\varepsilon|_{H^1(K_\varepsilon)}}
      {\mathcal A_{1,\varepsilon}}
    \longrightarrow\frac12,
    \notag\\
    \frac{|v_\varepsilon
      -I_{K_\varepsilon}v_\varepsilon|_{H^1(K_\varepsilon)}}
      {\mathcal U_{1,\varepsilon}}
    &\sim\frac14\mu_\varepsilon
      \longrightarrow\infty.
    \label{eq:ma-spherical-limits}
  \end{align}
  Thus, the unweighted directional scale cannot control the $H^1$ error with a constant independent of the chordal maximum angle, whereas the asymmetric scale has the correct order.
\end{theorem}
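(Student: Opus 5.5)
The proof transfers the exact affine ratios of Lemma~\ref{lem:maximum-angle-affine} to the sphere using Lemma~\ref{lem:sharpness-transfer-principle}. The family of this subsection meets the hypotheses of that lemma. Each $T_\varepsilon$ is a chordal triangle with $d_{T_\varepsilon}=d_\varepsilon>0$. The test function $w_\varepsilon(y)=\sigma^2$ is the restriction of a quadratic polynomial on the support plane. We also have $v_\varepsilon=w_\varepsilon\circ G_{T_\varepsilon}$. Hence \eqref{eq:sharpness-common-commuting} identifies the spherical interpolation error, pulled back by $\Psi_{T_\varepsilon}$, with $\mathcal E_\varepsilon$ from \eqref{eq:ma-test-error}.

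Next I identify the normalising quantities. By \eqref{eq:sharpness-common-directional-transfer}, the directional norms $\|\partial_{\ell_{i,\varepsilon}}\partial_{\ell_{j,\varepsilon}}w_\varepsilon\|_{L^2(T_\varepsilon)}$ and $|\partial_{\ell_{i,\varepsilon}}w_\varepsilon|_{H^1(T_\varepsilon)}$ coincide with the transported spherical quantities $\mathscr E_{ij}(v_\varepsilon;K_\varepsilon)$ and $\mathscr G_i(v_\varepsilon;K_\varepsilon)$. So $\mathcal A_{0,\varepsilon}$, $\mathcal A_{1,\varepsilon}$ and $\mathcal U_{1,\varepsilon}$ are the same numbers whether read on $T_\varepsilon$ or on $K_\varepsilon$, and no approximation enters in the denominators.

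I then apply \eqref{eq:sharpness-common-L2-transfer} with $S_\varepsilon=\mathcal A_{0,\varepsilon}$, and \eqref{eq:sharpness-common-H1-transfer} with $S_\varepsilon=\mathcal A_{1,\varepsilon}$ and with $S_\varepsilon=\mathcal U_{1,\varepsilon}$. Each spherical ratio is squeezed between $\chi_\varepsilon^{-1/2}$ and $\chi_\varepsilon$ (respectively $\chi_\varepsilon^{1/2}$) times the corresponding affine ratio. By \eqref{eq:ma-geometry}, $\chi_\varepsilon=(1-\varepsilon^2)^{-1/2}\to1$.
\begin{itemize}
\item The $L^2$ affine ratio is the constant $\tfrac18\sqrt{3/10}$, so the spherical ratio converges to it.
\item The affine ratio against $\mathcal A_{1,\varepsilon}$ tends to $1/2$ by \eqref{eq:ma-ratios}, so the spherical ratio has the same limit.
\item For $\mathcal U_{1,\varepsilon}$, \eqref{eq:ma-ratios} gives the affine ratio $\tfrac14\sqrt{\cot^2(\delta_\varepsilon/2)+2/3}$. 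Since $\cot(\delta_\varepsilon/2)/\mu_\varepsilon=\cos^2(\delta_\varepsilon/2)\to1$ and $\mu_\varepsilon=2/\sin\varepsilon\to\infty$, this ratio is asymptotic to $\tfrac14\mu_\varepsilon$. Multiplying by factors tending to one preserves the asymptotic equivalence, so the spherical ratio is $\sim\tfrac14\mu_\varepsilon\to\infty$.
\end{itemize}

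The only point needing care is nonvanishing of the affine errors, which the transfer lemma requires when forming quotients. It follows directly from the explicit positive formulas in \eqref{eq:ma-errors-and-scales}, since $\lambda_\varepsilon,\zeta_\varepsilon>0$.

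For the concluding interpretation, suppose $|v-I_Kv|_{H^1(K)}\le C\,\mathcal U_{1}$ held with $C$ independent of the element. The divergence of the third ratio contradicts this. By contrast, the bounded limit $1/2$ shows that the asymmetric scale $\mathcal A_{1,\varepsilon}$ from Theorem~\ref{thm:final-local-anisotropic} has the correct order.
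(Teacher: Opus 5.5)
Your proposal is correct and follows the paper's proof: apply Lemma~\ref{lem:sharpness-transfer-principle} with the three positive normalising scales of Lemma~\ref{lem:maximum-angle-affine}, and use $\chi_\varepsilon\to1$ to carry the affine ratios \eqref{eq:ma-ratios} over to $K_\varepsilon$. One minor point: when you divide the two-sided bounds by $S_\varepsilon$, what is needed is $S_\varepsilon>0$, not nonvanishing of the errors, and this also follows from $\lambda_\varepsilon,\zeta_\varepsilon>0$.
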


\begin{proof}
  Apply Lemma~\ref{lem:sharpness-transfer-principle} with the three positive normalising scales in Lemma~\ref{lem:maximum-angle-affine}. Because $\chi_\varepsilon\to1$, the affine ratios \eqref{eq:ma-ratios} are preserved after lifting.
\end{proof}

Indeed, in this symmetric family the two directional $H^1$ data are equal, and therefore
\begin{align*}
  h_{1,\varepsilon}\mathscr G_1
  +2h_{2,\varepsilon}\mathscr G_2
  =\frac32\mathcal U_{1,\varepsilon}.
\end{align*}
The corresponding error ratio is asymptotic to $\mu_\varepsilon/6$ and diverges, proving that the factor $\mu_T$ is needed in a uniform local bound. More generally, for any fixed $\theta<1$, equality of the two directional lengths and data gives
\begin{align*}
  \frac{|v_\varepsilon-I_{K_\varepsilon}v_\varepsilon|_{H^1(K_\varepsilon)}}
    {\mu_\varepsilon^{\theta}h_{1,\varepsilon}\mathscr G_1
      +2h_{2,\varepsilon}\mathscr G_2}
  =\frac{\mu_\varepsilon}
    {2(\mu_\varepsilon^{\theta}+2)}(1+o(1))
  \longrightarrow\infty.
\end{align*}
Thus, this symmetric family already rules out reducing the exponent of $\mu_T$ below one; the next family identifies which directional term must carry that factor. Convergence along a particular refining sequence may still occur when element-size decay compensates for the growth of $\mu_T$.

\begin{remark}[Shape dependence of the quadratic $H^1$ problem]
  \label{rem:quadratic-constant-asymmetry}
  The quadratic optimisation problem associated with \eqref{eq:affine-H1-sharp} retains the shape of the triangle.  For $w\in\mathbb P^2(T)$, set $M:=D_T^2w$ and
  \begin{align*}
    \mathcal A_1(w;T)
    :=|T|_2^{1/2}
    \left(
      \mu_Th_{T,1}|M\ell_{T,1}|
      +2h_{T,2}|M\ell_{T,2}|
    \right),
  \end{align*}
  while $\nabla_T(w-I_Tw)$ also retains the directional lengths and angles of $T$. Therefore, the triangle shape does not cancel. Let $C_{\mathrm{aff},1}^{\mathrm{opt}}$ be the optimal universal constant in \eqref{eq:affine-H1-sharp}, and define
  \begin{align*}
    C_{\mathrm{aff},1}^{\mathrm{quad}}
    :=
    \sup_T
    \sup_{w\in\mathbb P^2(T)\setminus\mathbb P^1(T)}
    \frac{|w-I_Tw|_{H^1(T)}}{\mathcal A_1(w;T)},
  \end{align*}
  where the first supremum is taken over all non-degenerate affine triangles labelled according to Convention~\ref{conv:chordal-labelling}. Then,
  \begin{align*}
    C_{\mathrm{aff},1}^{\mathrm{opt}}
    \geq C_{\mathrm{aff},1}^{\mathrm{quad}}
    \geq\frac12.
  \end{align*}
  Indeed, $\cot(\delta_\varepsilon/2)<\mu_\varepsilon$, so the last ratio in \eqref{eq:ma-ratios} is strictly below $1/2$ for each non-degenerate member of the family and tends to $1/2$ as $\varepsilon\downarrow0$. We leave open whether $C_{\mathrm{aff},1}^{\mathrm{quad}}=1/2$, whether this quadratic supremum is attained on a non-degenerate triangle, and what the exact value of $C_{\mathrm{aff},1}^{\mathrm{opt}}$ is.
\end{remark}

\subsection{Asymmetry of the maximum-angle penalty}
\label{subsec:asymmetry-sharpness}
The symmetric family in Subsection~\ref{subsec:maximum-angle-sharpness} cannot distinguish which directional term carries the maximum-angle factor. We now use an asymmetric flat triangle: one edge from the apex is much shorter than the other, while the apex angle still tends to $\pi$. This separates the two scales
\begin{align*}
  \mu_Th_{T,1}\mathscr G_1+2h_{T,2}\mathscr G_2
  \quad\text{and}\quad
  2h_{T,1}\mathscr G_1+\mu_Th_{T,2}\mathscr G_2.
\end{align*}
The example shows that the factor $\mu_T$ must remain on the longer directional term within this two-term estimate. It also tests whether its exponent can be reduced.

For $0<\varepsilon<1/2$, we set
\begin{align}
  \varphi_{1,\varepsilon}:=\varepsilon,
  \quad
  \varphi_{2,\varepsilon}:=\varepsilon^3,
  \quad
  r_\varepsilon:=R\varepsilon,
  \quad
  d_\varepsilon:=R\sqrt{1-\varepsilon^2},
  \quad
  c_\varepsilon:=d_\varepsilon e_3,
  \label{eq:as-parameters}
\end{align}
and
\begin{align}
  p_{1,\varepsilon}
  &:=c_\varepsilon+r_\varepsilon e_1,
  \notag\\
  p_{2,\varepsilon}
  &:=c_\varepsilon
    +r_\varepsilon\bigl(
      \cos\varphi_{1,\varepsilon}\,e_1
      +\sin\varphi_{1,\varepsilon}\,e_2
    \bigr),
  \notag\\
  p_{3,\varepsilon}
  &:=c_\varepsilon
    +r_\varepsilon\bigl(
      \cos\varphi_{2,\varepsilon}\,e_1
      -\sin\varphi_{2,\varepsilon}\,e_2
    \bigr),
  \quad
  T_\varepsilon
  :=\conv\bigl\{
    p_{1,\varepsilon},
    p_{2,\varepsilon},
    p_{3,\varepsilon}
  \bigr\}.
  \label{eq:as-family-definition}
\end{align}
The three vertices lie on the circle of radius $r_\varepsilon$ centred at $c_\varepsilon$ in the plane $\{x_3=d_\varepsilon\}$. Because $r_\varepsilon^2+d_\varepsilon^2=R^2$, they lie on $\Sph_R^2$, and because $0<\varphi_{2,\varepsilon}<\varphi_{1,\varepsilon}<\pi/2$, they are distinct and non-collinear. Their support plane is $x_3=d_\varepsilon$, so that
\begin{align*}
  \nu_{T_\varepsilon}=(0,0,1)^{\top},
  \quad
  d_{T_\varepsilon}=d_\varepsilon>0.
\end{align*}
From
\begin{align}
  p_{2,\varepsilon}-p_{1,\varepsilon}
  &=2r_\varepsilon\sin\frac{\varphi_{1,\varepsilon}}2
   \left(
     -\sin\frac{\varphi_{1,\varepsilon}}2,\
     \cos\frac{\varphi_{1,\varepsilon}}2,\
     0
   \right)^{\top},
  \notag\\
  p_{3,\varepsilon}-p_{1,\varepsilon}
  &=2r_\varepsilon\sin\frac{\varphi_{2,\varepsilon}}2
   \left(
     -\sin\frac{\varphi_{2,\varepsilon}}2,\
     -\cos\frac{\varphi_{2,\varepsilon}}2,\
     0
   \right)^{\top},
  \label{eq:as-edge-vectors}
\end{align}
we obtain the unit edge directions
\begin{align}
  \ell_{1,\varepsilon}
  =\left(
    -\sin\frac{\varphi_{1,\varepsilon}}2,\
    \cos\frac{\varphi_{1,\varepsilon}}2,\
    0
  \right)^{\top},
  \quad
  \ell_{2,\varepsilon}
  =\left(
    -\sin\frac{\varphi_{2,\varepsilon}}2,\
    -\cos\frac{\varphi_{2,\varepsilon}}2,\
    0
  \right)^{\top},
  \label{eq:as-directions}
\end{align}
so
\begin{align*}
  \cos\beta_\varepsilon
  =\ell_{1,\varepsilon}\cdot\ell_{2,\varepsilon}
  =-\cos\frac{\varphi_{1,\varepsilon}+\varphi_{2,\varepsilon}}2 .
\end{align*}
The two base vertices subtend the central angle $\varphi_{1,\varepsilon}+\varphi_{2,\varepsilon}$, so the opposite edge has length $2r_\varepsilon\sin((\varphi_{1,\varepsilon}+\varphi_{2,\varepsilon})/2)$. Because $0<\varphi_{2,\varepsilon}<\varphi_{1,\varepsilon}$ and $\varphi_{1,\varepsilon}+\varphi_{2,\varepsilon}<\pi/2$, the sine is strictly increasing on the relevant range, so that edge is the longest one and $h_{2,\varepsilon}\leq h_{1,\varepsilon}$. The labelling in \eqref{eq:as-family-definition} therefore agrees with Convention~\ref{conv:chordal-labelling}, and
\begin{align}
  h_{1,\varepsilon}
  &=2r_\varepsilon\sin\frac{\varphi_{1,\varepsilon}}2,
  \quad
  h_{2,\varepsilon}
  =2r_\varepsilon\sin\frac{\varphi_{2,\varepsilon}}2,
  \notag\\
  h_\varepsilon
  &=2r_\varepsilon
    \sin\frac{\varphi_{1,\varepsilon}+\varphi_{2,\varepsilon}}2,
  \quad
  \beta_\varepsilon
  =\pi-\frac{\varphi_{1,\varepsilon}+\varphi_{2,\varepsilon}}2,
  \quad
  \mu_\varepsilon
  =\frac2{\sin\bigl((\varphi_{1,\varepsilon}+\varphi_{2,\varepsilon})/2\bigr)},
  \notag\\
  |T_\varepsilon|_2
  &=2r_\varepsilon^2
    \sin\frac{\varphi_{1,\varepsilon}}2
    \sin\frac{\varphi_{2,\varepsilon}}2
    \sin\frac{\varphi_{1,\varepsilon}+\varphi_{2,\varepsilon}}2,
  \quad
  H_{T_\varepsilon}=4R\varepsilon,
  \quad
  \vartheta_{T_\varepsilon}=\varepsilon,
  \notag\\
  \chi_\varepsilon
  &=(1-\varepsilon^2)^{-1/2},
  \quad
  \rho_\varepsilon
  :=\frac{h_{1,\varepsilon}}{h_{2,\varepsilon}}
  =\frac{\sin(\varphi_{1,\varepsilon}/2)}{\sin(\varphi_{2,\varepsilon}/2)}.
  \label{eq:as-geometry}
\end{align}
Thus, $h_\varepsilon\to0$, $\chi_\varepsilon\to1$, $\mu_\varepsilon\to\infty$, and $\rho_\varepsilon\to\infty$: maximum-angle degeneration and aspect-ratio degeneration occur simultaneously, while spherical locality is retained.

The vertex $p_{3,\varepsilon}$ approaches $p_{1,\varepsilon}$ faster than $p_{2,\varepsilon}$ does. Both angular separations tend to zero, however, and the two edge directions become opposite. If $\varphi_{1,\varepsilon}$ were fixed at a positive value while only $\varphi_{2,\varepsilon}$ tended to zero, the maximum angle would remain bounded away from $\pi$. The present family lets both parameters tend to zero. Thus, the short edge does not prevent maximum-angle degeneration. Indeed,
\begin{align*}
  h_{1,\varepsilon}\sim R\varepsilon^2,
  \quad h_{2,\varepsilon}\sim R\varepsilon^4,
  \quad \beta_\varepsilon
  =\pi-\frac{\varepsilon+\varepsilon^3}{2}\longrightarrow\pi.
\end{align*}

We use the altitude frame of $T_\varepsilon$. Let
\begin{align}
  b_{1,\varepsilon}
  :=\frac{p_{2,\varepsilon}-p_{3,\varepsilon}}
         {h_\varepsilon}
  =\left(
    -\sin\frac{\varphi_{1,\varepsilon}-\varphi_{2,\varepsilon}}2,\
    \cos\frac{\varphi_{1,\varepsilon}-\varphi_{2,\varepsilon}}2,\
    0
  \right)^{\top},
  \label{eq:as-altitude-frame}
\end{align}
and let $b_{2,\varepsilon}\in V_{T_\varepsilon}$ be the unit vector orthogonal to $b_{1,\varepsilon}$ with $(p_{1,\varepsilon}-p_{3,\varepsilon}) \cdot b_{2,\varepsilon}>0$. We write
\begin{align*}
  y=m_\varepsilon+{x_1} b_{1,\varepsilon}+{x_2} b_{2,\varepsilon},
\end{align*}
where $m_\varepsilon$ is the foot of the perpendicular from $p_{1,\varepsilon}$ to the line through the two base vertices. Using \eqref{eq:as-edge-vectors} and \eqref{eq:as-altitude-frame},
\begin{align}
  \sigma_{2,\varepsilon}
  &:=(p_{2,\varepsilon}
      -p_{1,\varepsilon})\cdot b_{1,\varepsilon}
  =2r_\varepsilon\sin\frac{\varphi_{1,\varepsilon}}2
    \cos\frac{\varphi_{2,\varepsilon}}2,
  \notag\\
  \sigma_{3,\varepsilon}
  &:=-(p_{3,\varepsilon}
      -p_{1,\varepsilon})\cdot b_{1,\varepsilon}
  =2r_\varepsilon\sin\frac{\varphi_{2,\varepsilon}}2
    \cos\frac{\varphi_{1,\varepsilon}}2,
  \notag\\
  \zeta_\varepsilon
  &:=\frac{2|T_\varepsilon|_2}{h_\varepsilon}
  =2r_\varepsilon\sin\frac{\varphi_{1,\varepsilon}}2
    \sin\frac{\varphi_{2,\varepsilon}}2,
  \label{eq:as-altitude-coordinates}
\end{align}
so that the three vertices are $m_\varepsilon+\sigma_{2,\varepsilon}b_{1,\varepsilon}$, $m_\varepsilon-\sigma_{3,\varepsilon}b_{1,\varepsilon}$ and $m_\varepsilon+\zeta_\varepsilon b_{2,\varepsilon}$, and
\begin{align}
  \sigma_{2,\varepsilon}+\sigma_{3,\varepsilon}
  =h_\varepsilon,
  \quad
  \gamma_\varepsilon
  :=\frac{\sigma_{2,\varepsilon}\sigma_{3,\varepsilon}}{\zeta_\varepsilon}
  =2r_\varepsilon
    \cos\frac{\varphi_{1,\varepsilon}}2
    \cos\frac{\varphi_{2,\varepsilon}}2 .
  \label{eq:as-thin-direction-coefficient}
\end{align}
On $T_\varepsilon$, we set $w_\varepsilon(y):={x_1}^2$. Its nodal values are $\sigma_{2,\varepsilon}^2$, $\sigma_{3,\varepsilon}^2$ and $0$, whence
\begin{align}
  \mathcal E_\varepsilon
  :=w_\varepsilon
    -I_{T_\varepsilon}w_\varepsilon
  ={x_1}^2
   -(\sigma_{2,\varepsilon}-\sigma_{3,\varepsilon}){x_1}
   -\sigma_{2,\varepsilon}\sigma_{3,\varepsilon}
   +\gamma_\varepsilon {x_2},
  \label{eq:as-test-error}
\end{align}
and
\begin{align}
  \nabla_T\mathcal E_\varepsilon
  =\bigl(2{x_1}-(\sigma_{2,\varepsilon}-\sigma_{3,\varepsilon})\bigr)
    b_{1,\varepsilon}
   +\gamma_\varepsilon b_{2,\varepsilon}.
  \label{eq:as-test-error-gradient}
\end{align}
The coefficient $\gamma_\varepsilon$ of the thin direction is the maximum-angle mechanism in its exact form.

\begin{lemma}[Exact asymmetric error ratios]
  \label{lem:asymmetry-affine}
  We set $A_\varepsilon:=|T_\varepsilon|_2$ and we define
  \begin{align*}
    \mathcal A_{1,\varepsilon}
    &:=\mu_\varepsilon
       h_{1,\varepsilon}
       |\partial_{\ell_{1,\varepsilon}}
         w_\varepsilon|_{H^1(T_\varepsilon)}
      +2h_{2,\varepsilon}
       |\partial_{\ell_{2,\varepsilon}}
         w_\varepsilon|_{H^1(T_\varepsilon)},
    \\
    \widetilde{\mathcal A}_{1,\varepsilon}
    &:=2h_{1,\varepsilon}
       |\partial_{\ell_{1,\varepsilon}}
         w_\varepsilon|_{H^1(T_\varepsilon)}
      +\mu_\varepsilon
       h_{2,\varepsilon}
       |\partial_{\ell_{2,\varepsilon}}
         w_\varepsilon|_{H^1(T_\varepsilon)}.
  \end{align*}
  Then,
  \begin{align}
    |\partial_{\ell_{1,\varepsilon}}
      w_\varepsilon|_{H^1(T_\varepsilon)}
    &=2\cos\frac{\varphi_{2,\varepsilon}}2
      \sqrt{A_\varepsilon},
    \quad
    |\partial_{\ell_{2,\varepsilon}}
      w_\varepsilon|_{H^1(T_\varepsilon)}
    =2\cos\frac{\varphi_{1,\varepsilon}}2
      \sqrt{A_\varepsilon},
    \notag\\
    |\mathcal E_\varepsilon|
      _{H^1(T_\varepsilon)}^2
    &=A_\varepsilon
      \left(
        \gamma_\varepsilon^2
        +\frac{\sigma_{2,\varepsilon}^2+\sigma_{3,\varepsilon}^2}{3}
      \right),
    \label{eq:as-errors-and-scales}
  \end{align}
  and consequently
  \begin{align}
    \mathcal A_{1,\varepsilon}
    =2\sqrt{A_\varepsilon}
     \bigl(
       \mu_\varepsilon\sigma_{2,\varepsilon}
       +2\sigma_{3,\varepsilon}
     \bigr),
    \quad
    \widetilde{\mathcal A}_{1,\varepsilon}
    =2\sqrt{A_\varepsilon}
     \bigl(
       2\sigma_{2,\varepsilon}
       +\mu_\varepsilon\sigma_{3,\varepsilon}
     \bigr).
    \label{eq:as-scales}
  \end{align}
  Furthermore,
  \begin{align}
    \frac{|\mathcal E_\varepsilon|
      _{H^1(T_\varepsilon)}}
      {\mathcal A_{1,\varepsilon}}
    \longrightarrow\frac14,
    \quad
    \frac{|\mathcal E_\varepsilon|
      _{H^1(T_\varepsilon)}}
      {\widetilde{\mathcal A}_{1,\varepsilon}}
    =\frac{\mu_\varepsilon}8
     \bigl(1+o(1)\bigr)
    \longrightarrow\infty
    \quad\text{as }\varepsilon\downarrow0 .
    \label{eq:as-ratios}
  \end{align}
\end{lemma}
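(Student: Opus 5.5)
The plan is a direct computation in the altitude frame $(b_{1,\varepsilon},b_{2,\varepsilon})$, in which $T_\varepsilon$ is the triangle with vertices $(\sigma_{2,\varepsilon},0)$, $(-\sigma_{3,\varepsilon},0)$ and $(0,\zeta_\varepsilon)$. We have $w_\varepsilon=x_1^2$, so $D_T^2w_\varepsilon=2b_{1,\varepsilon}\otimes b_{1,\varepsilon}$. Hence $\nabla_T\partial_{\ell_{i,\varepsilon}}w_\varepsilon=2(\ell_{i,\varepsilon}\cdot b_{1,\varepsilon})b_{1,\varepsilon}$ is constant, and
\begin{align*}
  |\partial_{\ell_{i,\varepsilon}}w_\varepsilon|_{H^1(T_\varepsilon)}
  =2|\ell_{i,\varepsilon}\cdot b_{1,\varepsilon}|\sqrt{A_\varepsilon}.
\end{align*}
From \eqref{eq:as-directions} and \eqref{eq:as-altitude-frame}, the identity $\cos(a)\cos(b)+\sin(a)\sin(b)=\cos(a-b)$ gives $\ell_{1,\varepsilon}\cdot b_{1,\varepsilon}=\cos(\varphi_{2,\varepsilon}/2)$. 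Similarly, $\ell_{2,\varepsilon}\cdot b_{1,\varepsilon}=-\cos(\varphi_{1,\varepsilon}/2)$. This proves the first line of \eqref{eq:as-errors-and-scales}. Multiplying by $h_{i,\varepsilon}$ and comparing with \eqref{eq:as-altitude-coordinates} gives $h_{1,\varepsilon}\cos(\varphi_{2,\varepsilon}/2)=\sigma_{2,\varepsilon}$ and $h_{2,\varepsilon}\cos(\varphi_{1,\varepsilon}/2)=\sigma_{3,\varepsilon}$, which yields \eqref{eq:as-scales} immediately.

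For the error, I use \eqref{eq:as-test-error-gradient}. The $b_{2,\varepsilon}$-component $\gamma_\varepsilon$ is constant and contributes $A_\varepsilon\gamma_\varepsilon^2$. For the $b_{1,\varepsilon}$-component, set $f(x_1)=2x_1-(\sigma_{2,\varepsilon}-\sigma_{3,\varepsilon})$. Integrating in $x_2$ first, the vertical fibre length is piecewise linear in $x_1$: it equals $\zeta_\varepsilon(1-x_1/\sigma_{2,\varepsilon})$ for $x_1>0$ and $\zeta_\varepsilon(1+x_1/\sigma_{3,\varepsilon})$ for $x_1<0$. The integral $\int f^2$ against this density equals $A_\varepsilon$ times the second moment of $f$ under the triangular distribution on $[-\sigma_{3,\varepsilon},\sigma_{2,\varepsilon}]$ with mode $0$. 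This distribution has mean $(\sigma_{2,\varepsilon}-\sigma_{3,\varepsilon})/3$ and variance $(\sigma_2^2+\sigma_3^2+\sigma_2\sigma_3)/18$, dropping the $\varepsilon$ subscripts. A short computation then gives $\mathbb E f^2=4\,\mathrm{Var}+4(\text{mean}-(\sigma_2-\sigma_3)/2)^2$. The mean offset is $-(\sigma_2-\sigma_3)/6$, so
\begin{align*}
  \mathbb E f^2=\tfrac{2}{9}(\sigma_2^2+\sigma_3^2+\sigma_2\sigma_3)+\tfrac19(\sigma_2-\sigma_3)^2=\tfrac13(\sigma_2^2+\sigma_3^2).
\end{align*}
This is the remaining line of \eqref{eq:as-errors-and-scales}. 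This moment computation is the only real calculation and the step where an arithmetic slip is most likely, so I would double-check it against the symmetric case $\sigma_2=\sigma_3=\lambda$, where it reproduces $\tfrac23\lambda^3\zeta$ from Lemma~\ref{lem:maximum-angle-affine}.

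For the asymptotics \eqref{eq:as-ratios}, I use $\varphi_1=\varepsilon$ and $\varphi_2=\varepsilon^3$, so that $\sigma_{2,\varepsilon}\sim R\varepsilon^2$, $\sigma_{3,\varepsilon}\sim R\varepsilon^4$, $\gamma_\varepsilon\sim2R\varepsilon$ and $\mu_\varepsilon\sim4/\varepsilon$. In $|\mathcal E_\varepsilon|_{H^1}/\sqrt{A_\varepsilon}$ the $\gamma$-term dominates, giving $\gamma_\varepsilon(1+o(1))$. In the denominators:
\begin{align*}
  \mu_\varepsilon\sigma_{2,\varepsilon}\sim4R\varepsilon\gg2\sigma_{3,\varepsilon},
  \qquad
  2\sigma_{2,\varepsilon}\sim2R\varepsilon^2\gg\mu_\varepsilon\sigma_{3,\varepsilon}\sim4R\varepsilon^3.
\end{align*}
Hence the first ratio is $2R\varepsilon/(2\cdot4R\varepsilon)\to1/4$. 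The second ratio is $2R\varepsilon/(4R\varepsilon^2)(1+o(1))=1/(2\varepsilon)(1+o(1))=(\mu_\varepsilon/8)(1+o(1))\to\infty$.
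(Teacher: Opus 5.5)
Your proposal is correct and follows essentially the same route as the paper. The directional data come from $D_T^2w_\varepsilon=2b_{1,\varepsilon}\otimes b_{1,\varepsilon}$ and the products $\ell_{i,\varepsilon}\cdot b_{1,\varepsilon}$, the energy of $\mathcal E_\varepsilon$ comes from integrating \eqref{eq:as-test-error-gradient} over the altitude-coordinate description of $T_\varepsilon$, and the limits come from the same leading-order asymptotics of $\mu_\varepsilon,\gamma_\varepsilon,\sigma_{2,\varepsilon},\sigma_{3,\varepsilon}$. The only difference is cosmetic: you evaluate $\int_{T_\varepsilon}\bigl(2x_1-(\sigma_{2,\varepsilon}-\sigma_{3,\varepsilon})\bigr)^2\,\diff s$ through the mean and variance of the triangular marginal of $x_1$, whereas the paper computes $\int x_1$ and $\int x_1^2$ directly and uses a factorisation identity; both give $A_\varepsilon(\sigma_{2,\varepsilon}^2+\sigma_{3,\varepsilon}^2)/3$.
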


\begin{proof}
  Because $D_T^2w_\varepsilon =2b_{1,\varepsilon}\otimes b_{1,\varepsilon}$, we have $\nabla_T(\partial_{\ell}w_\varepsilon) =2(\ell\cdot b_{1,\varepsilon})b_{1,\varepsilon}$ for any constant direction $\ell$. From \eqref{eq:as-altitude-coordinates},
  \begin{align*}
    \ell_{1,\varepsilon}\cdot b_{1,\varepsilon}
    =\frac{\sigma_{2,\varepsilon}}{h_{1,\varepsilon}}
    =\cos\frac{\varphi_{2,\varepsilon}}2,
    \quad
    \ell_{2,\varepsilon}\cdot b_{1,\varepsilon}
    =-\frac{\sigma_{3,\varepsilon}}{h_{2,\varepsilon}}
    =-\cos\frac{\varphi_{1,\varepsilon}}2,
  \end{align*}
  which gives the two directional data in \eqref{eq:as-errors-and-scales}. Multiplying by the lengths in \eqref{eq:as-geometry} therefore gives \eqref{eq:as-scales}.

  The function on the right-hand side of \eqref{eq:as-test-error} vanishes at the three vertices, and its difference from $w_\varepsilon$ is affine. Unisolvence therefore identifies it as the nodal interpolation error. In the altitude coordinates, the element is
  \begin{align*}
    0\leq {x_2}\leq\zeta_\varepsilon,
    \quad
    -\sigma_{3,\varepsilon}
      \left(1-\frac {x_2}{\zeta_\varepsilon}\right)
    \leq{x_1}\leq
    \sigma_{2,\varepsilon}
      \left(1-\frac {x_2}{\zeta_\varepsilon}\right),
  \end{align*}
  so that
  \begin{align*}
    \int_{T_\varepsilon}{x_1}\,\diff s
    =\frac{(\sigma_{2,\varepsilon}^2-\sigma_{3,\varepsilon}^2)
      \zeta_\varepsilon}6,
    \quad
    \int_{T_\varepsilon}{x_1}^2\,\diff s
    =\frac{(\sigma_{2,\varepsilon}^3+\sigma_{3,\varepsilon}^3)
      \zeta_\varepsilon}{12},
    \quad
    A_\varepsilon
    =\frac{(\sigma_{2,\varepsilon}+\sigma_{3,\varepsilon})
      \zeta_\varepsilon}2 .
  \end{align*}
  Integrating the square of \eqref{eq:as-test-error-gradient} and using
  \begin{align*}
    \sigma_{2,\varepsilon}^3+\sigma_{3,\varepsilon}^3
    +\sigma_{2,\varepsilon}^2\sigma_{3,\varepsilon}
    +\sigma_{2,\varepsilon}\sigma_{3,\varepsilon}^2
    =(\sigma_{2,\varepsilon}+\sigma_{3,\varepsilon})
     (\sigma_{2,\varepsilon}^2+\sigma_{3,\varepsilon}^2)
  \end{align*}
  give the second identity in \eqref{eq:as-errors-and-scales}.

  For the limits, \eqref{eq:as-parameters},
  \eqref{eq:as-altitude-coordinates} and
  \eqref{eq:as-thin-direction-coefficient} give, as $\varepsilon\downarrow0$,
  \begin{align*}
    \mu_\varepsilon
    &=\frac4\varepsilon\bigl(1+O(\varepsilon^2)\bigr),
    \quad
    \gamma_\varepsilon=2R\varepsilon\bigl(1+O(\varepsilon^2)\bigr),
    \\
    \sigma_{2,\varepsilon}
    &=R\varepsilon^2\bigl(1+O(\varepsilon^2)\bigr),
    \quad
    \sigma_{3,\varepsilon}=R\varepsilon^4\bigl(1+O(\varepsilon^2)\bigr).
  \end{align*}
  Therefore, $\sigma_{2,\varepsilon}^2+\sigma_{3,\varepsilon}^2 =O(\varepsilon^2)\gamma_\varepsilon^2$, so that $|\mathcal E_\varepsilon|_{H^1(T_\varepsilon)} =\gamma_\varepsilon\sqrt{A_\varepsilon}(1+o(1))$, while
  \begin{align*}
    \mu_\varepsilon\sigma_{2,\varepsilon}
    +2\sigma_{3,\varepsilon}
    =4R\varepsilon\bigl(1+o(1)\bigr),
    \quad
    2\sigma_{2,\varepsilon}
    +\mu_\varepsilon\sigma_{3,\varepsilon}
    =2R\varepsilon^2\bigl(1+o(1)\bigr).
  \end{align*}
  Division proves \eqref{eq:as-ratios}.
\end{proof}

We set $K_\varepsilon :=\Psi_{T_\varepsilon}(T_\varepsilon)$ and $v_\varepsilon :=w_\varepsilon\circ G_{T_\varepsilon}$. By \eqref{eq:sharpness-common-directional-transfer}, the two affine directional data in Lemma~\ref{lem:asymmetry-affine} are exactly $\mathscr G_1(v_\varepsilon;K_\varepsilon)$ and $\mathscr G_2(v_\varepsilon;K_\varepsilon)$.

\begin{theorem}[Asymmetry of maximum-angle scaling]
  \label{thm:sharpness-asymmetry}
  As $\varepsilon\downarrow0$,
  \begin{align}
    \frac{|v_\varepsilon
      -I_{K_\varepsilon}v_\varepsilon|
      _{H^1(K_\varepsilon)}}
      {\mathcal A_{1,\varepsilon}}
    \longrightarrow\frac14,
    \quad
    \frac{|v_\varepsilon
      -I_{K_\varepsilon}v_\varepsilon|
      _{H^1(K_\varepsilon)}}
      {\widetilde{\mathcal A}_{1,\varepsilon}}
    \sim\frac{\mu_\varepsilon}8
    \longrightarrow\infty.
    \label{eq:as-spherical-limits}
  \end{align}
  Consequently, no estimate of the form
  \begin{align*}
    |v-I_Kv|_{H^1(K)}
    \leq C\chi_T^{1/2}
    \left(
      2h_{T,1}\mathscr G_1(v;K)
      +\mu_Th_{T,2}\mathscr G_2(v;K)
    \right)
  \end{align*}
  can hold with a constant $C$ independent of the element. More generally, for any $\theta<1$,
  \begin{align}
    \frac{|v_\varepsilon
      -I_{K_\varepsilon}v_\varepsilon|
      _{H^1(K_\varepsilon)}}
      {\mu_\varepsilon^{\theta}
       h_{1,\varepsilon}
       \mathscr G_1(v_\varepsilon;K_\varepsilon)
       +2h_{2,\varepsilon}
       \mathscr G_2(v_\varepsilon;K_\varepsilon)}
    \longrightarrow\infty.
    \label{eq:as-exponent-sharpness}
  \end{align}
  Thus, within the two-term directional scale of \eqref{eq:local-anisotropic-H1}, the maximum-angle factor cannot be transferred from the $h_{T,1}$ term to the $h_{T,2}$ term, and its exponent cannot be reduced below one.
\end{theorem}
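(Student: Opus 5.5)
The plan is to transfer the affine asymptotics of Lemma~\ref{lem:asymmetry-affine} to the sphere with Lemma~\ref{lem:sharpness-transfer-principle}, exactly as in the proof of Theorem~\ref{thm:sharpness-semiregularity}.

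Here $w_\varepsilon$ is the restriction of the quadratic polynomial $x_1^2$, so that lemma applies. It has three consequences:
\begin{align*}
  \mathscr G_i(v_\varepsilon;K_\varepsilon)
  =|\partial_{\ell_{i,\varepsilon}}w_\varepsilon|_{H^1(T_\varepsilon)},
  \quad i=1,2,
\end{align*}
by \eqref{eq:sharpness-common-directional-transfer}; the normalising scales $\mathcal A_{1,\varepsilon}$ and $\widetilde{\mathcal A}_{1,\varepsilon}$ are exactly the transported spherical scales; and the spherical errors satisfy
\begin{align*}
  \chi_\varepsilon^{-1/2}|\mathcal E_\varepsilon|_{H^1(T_\varepsilon)}
  \leq|v_\varepsilon-I_{K_\varepsilon}v_\varepsilon|_{H^1(K_\varepsilon)}
  \leq\chi_\varepsilon^{1/2}|\mathcal E_\varepsilon|_{H^1(T_\varepsilon)}.
\end{align*}
Since $\chi_\varepsilon=(1-\varepsilon^2)^{-1/2}\to1$, both limits in \eqref{eq:as-ratios} survive the lifting, which gives \eqref{eq:as-spherical-limits}.

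For the non-existence claim, suppose the displayed estimate held with a uniform constant $C$. Applied to $v_\varepsilon$, it would give
\begin{align*}
  \frac{|v_\varepsilon-I_{K_\varepsilon}v_\varepsilon|_{H^1(K_\varepsilon)}}
       {\widetilde{\mathcal A}_{1,\varepsilon}}
  \leq C\chi_\varepsilon^{1/2},
\end{align*}
whose right-hand side stays bounded. This contradicts the divergence $\sim\mu_\varepsilon/8$ just proved.

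For \eqref{eq:as-exponent-sharpness}, I would reuse the asymptotics from the proof of Lemma~\ref{lem:asymmetry-affine}. By \eqref{eq:as-scales}, the denominator equals
\begin{align*}
  2\sqrt{A_\varepsilon}\bigl(\mu_\varepsilon^{\theta}\sigma_{2,\varepsilon}+2\sigma_{3,\varepsilon}\bigr).
\end{align*}
The ingredients are $\mu_\varepsilon\sim4/\varepsilon$, $\sigma_{2,\varepsilon}\sim R\varepsilon^2$, $\sigma_{3,\varepsilon}\sim R\varepsilon^4$, and
\begin{align*}
  |v_\varepsilon-I_{K_\varepsilon}v_\varepsilon|_{H^1(K_\varepsilon)}
  =\gamma_\varepsilon\sqrt{A_\varepsilon}\,(1+o(1)),
  \quad
  \gamma_\varepsilon\sim2R\varepsilon,
\end{align*}
where the error asymptotics come from the transfer step with $\chi_\varepsilon\to1$. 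Since $\theta<1$, we have $\mu_\varepsilon^{\theta}\sigma_{2,\varepsilon}\sim4^{\theta}R\varepsilon^{2-\theta}$. Because $\varepsilon^{2-\theta}\gg\varepsilon^4$, this term dominates $2\sigma_{3,\varepsilon}$ when $\theta>-2$. For $\theta\leq-2$, both terms are $O(\varepsilon^{\min(2-\theta,4)})$ and hence $o(\varepsilon)$ anyway. In either case the ratio behaves like
\begin{align*}
  \frac{2R\varepsilon}{2\cdot O(\varepsilon^{\min(2-\theta,4)})}\longrightarrow\infty,
\end{align*}
since $\min(2-\theta,4)>1$. The $\sqrt{A_\varepsilon}$ factors cancel throughout.

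The only delicate point is making sure the $o(1)$ corrections from $\chi_\varepsilon$, and from $\sigma_{2,\varepsilon}^2+\sigma_{3,\varepsilon}^2=O(\varepsilon^2)\gamma_\varepsilon^2$, are multiplicative. They are, so no further obstacle arises. The concluding interpretation then follows, because the estimate of Theorem~\ref{thm:final-local-anisotropic} attains the finite limit $1/4$ on this family.
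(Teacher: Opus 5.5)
Your proposal is correct and follows essentially the paper's proof: you lift the affine ratios of Lemma~\ref{lem:asymmetry-affine} via Lemma~\ref{lem:sharpness-transfer-principle} using $\chi_\varepsilon\to1$, derive the non-existence claim from the divergent second ratio, and obtain \eqref{eq:as-exponent-sharpness} from the same asymptotics of $\mu_\varepsilon$, $\sigma_{2,\varepsilon}$, $\sigma_{3,\varepsilon}$ and $\gamma_\varepsilon$. The only difference is cosmetic. For negative $\theta$, the paper reduces to the case $\theta=0$ by noting that $\mu_\varepsilon\geq2$ gives $\mu_\varepsilon^{\theta}\leq1$, whereas you treat all $\theta<1$ directly with a case split at $\theta=-2$; both arguments are valid.
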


\begin{proof}
  Because $\chi_\varepsilon\to1$, we may apply Lemma~\ref{lem:sharpness-transfer-principle} with the positive normalising scales of Lemma~\ref{lem:asymmetry-affine}, and the affine ratios \eqref{eq:as-ratios} are preserved after lifting. This proves \eqref{eq:as-spherical-limits}; the second limit there contradicts any uniform estimate with the exchanged scale.

  For the last assertion, let $0\leq\theta<1$. The asymptotics in the proof of Lemma~\ref{lem:asymmetry-affine} give
  \begin{align*}
    \mu_\varepsilon^{\theta}\sigma_{2,\varepsilon}
    +2\sigma_{3,\varepsilon}
    =4^{\theta}R\varepsilon^{2-\theta}\bigl(1+o(1)\bigr),
    \quad
    \gamma_\varepsilon=2R\varepsilon\bigl(1+o(1)\bigr).
  \end{align*}
  Therefore, the corresponding affine quotient is $4^{-\theta}\varepsilon^{\theta-1}(1+o(1))$ and diverges. Because $\chi_\varepsilon\to1$, Lemma~\ref{lem:sharpness-transfer-principle} gives the same divergence after lifting. If $\theta<0$, then $\mu_\varepsilon\geq2$ implies $\mu_\varepsilon^{\theta}\leq1$. The denominator in \eqref{eq:as-exponent-sharpness} is therefore no larger than its value at $\theta=0$. The divergence already proved for $\theta=0$ proves \eqref{eq:as-exponent-sharpness} also for $\theta<0$.
\end{proof}

\begin{remark}[Relation to the symmetric family]
  \label{rem:as-contains-ma}
  The two-angle construction in \eqref{eq:as-family-definition} recovers the family of Subsection~\ref{subsec:maximum-angle-sharpness} under the symmetric choice $\varphi_{1,\varepsilon}=\varphi_{2,\varepsilon} =\delta_\varepsilon$, up to interchanging $p_{2,\varepsilon}$ and $p_{3,\varepsilon}$. Then, $\sigma_{2,\varepsilon}=\sigma_{3,\varepsilon}=\lambda_\varepsilon$, $\zeta_\varepsilon$ is the apex offset, and \eqref{eq:as-errors-and-scales} reduces to the second identity of \eqref{eq:ma-errors-and-scales}. The symmetric case has $\mathscr G_1=\mathscr G_2$ and $h_{T,1}=h_{T,2}$, and therefore measures only the size of the penalty; the asymmetric case measures its position.
\end{remark}

\subsection{Optimality of the \texorpdfstring{$H^1$}{H1} radial-transfer exponent}
\label{subsec:radial-exponent-sharpness}
The preceding families refine while their radial factors tend to one. This family instead keeps an equilateral chordal shape and moves the supporting plane towards the sphere centre. Its purpose is to test the radial-transfer factor when spherical locality is lost. The parameter $\varepsilon$ now denotes the support-plane distance. For $0<\varepsilon<R$, we set
\begin{align*}
  d_\varepsilon:=\varepsilon,
  \quad
  s_\varepsilon:=\sqrt{R^2-\varepsilon^2},
  \quad
  c_\varepsilon:=(0,0,\varepsilon)^{\top}.
\end{align*}
Let $T_*\subset\{x_3=0\}$ be the equilateral triangle centred at the origin with circumradius one. Then,
\begin{align*}
  h_*:=\sqrt3,
  \quad
  A_*:=|T_*|_2=\frac{3\sqrt3}{4},
  \quad
  \mu_*:=\frac4{\sqrt3}.
\end{align*}
Let $\xi_{*,1},\xi_{*,2},\xi_{*,3}$ be the vertices of $T_*$. For
\begin{align*}
  T_\varepsilon
  :=c_\varepsilon+s_\varepsilon T_*,
  \quad
  K_\varepsilon
  :=\Psi_{T_\varepsilon}
    (T_\varepsilon),
\end{align*}
the vertices of $T_\varepsilon$ are $p_{i,\varepsilon} :=c_\varepsilon+s_\varepsilon\xi_{*,i}$. Because $|\xi_{*,i}|=1$ and $c_\varepsilon\cdot\xi_{*,i}=0$,
\begin{align*}
  |p_{i,\varepsilon}|^2
  =\varepsilon^2+s_\varepsilon^2
  =R^2,
  \quad i=1,2,3.
\end{align*}
Thus, the vertices lie on $\Sph_R^2$.  Since $s_\varepsilon>0$, they remain non-collinear, and their support plane is $x_3=\varepsilon$. Therefore,
\begin{align*}
  \nu_{T_\varepsilon}=(0,0,1)^{\top},
  \quad
  d_{T_\varepsilon}=\varepsilon>0.
\end{align*}
Furthermore,
\begin{align}
  h_{1,\varepsilon}
  &=h_{2,\varepsilon}
  =h_\varepsilon
  =s_\varepsilon h_*,
  \quad
  |T_\varepsilon|_2 =s_\varepsilon^2A_*,
  \notag\\
  \mu_\varepsilon&=\mu_*,
  \quad
  H_{T_\varepsilon} =4s_\varepsilon,
  \quad
  \chi_\varepsilon=\frac R\varepsilon
    \longrightarrow\infty,
  \notag\\
  \vartheta_{T_\varepsilon}
  &=\frac{s_\varepsilon}{R}\longrightarrow1.
  \label{eq:locality-sharp-geometry}
\end{align}
The vertices approach the equator, while the spherical triangle contains the north pole and expands towards the northern hemisphere. Since $h_\varepsilon\to\sqrt3R$, this is a coarse family rather than a refining one. It isolates radial distortion without chordal shape degeneration. Every member has $d_{T_\varepsilon}>0$, so the radial parametrisation remains well defined, although its distortion is unbounded along the family. We define
\begin{align*}
  w_\varepsilon(y)
  :=|y-c_\varepsilon|^2,
  \quad y\in T_\varepsilon,
  \quad
  v_\varepsilon
  :=w_\varepsilon
    \circ G_{T_\varepsilon}
  \quad\text{on }K_\varepsilon.
\end{align*}
Because $d_{T_\varepsilon}>0$, the inverse radial map $G_{T_\varepsilon}$ extends smoothly to an open neighbourhood of $K_\varepsilon$ in $\Sph_R^2$. Since $w_\varepsilon$ is a polynomial, $v_\varepsilon=w_\varepsilon\circ G_{T_\varepsilon}$ also extends smoothly to that neighbourhood and, in particular,
\begin{align*}
  v_\varepsilon
  \in H^2(K_\varepsilon).
\end{align*}
The three nodal values of $w_\varepsilon$ equal $s_\varepsilon^2$, so
\begin{align}
  I_{T_\varepsilon}w_\varepsilon
  =s_\varepsilon^2,
  \quad
  \nabla_T(w_\varepsilon
    -I_{T_\varepsilon}w_\varepsilon)
  =2(y-c_\varepsilon).
  \label{eq:locality-sharp-error}
\end{align}
The commuting identity gives
\begin{align}
  \left(
    v_\varepsilon
    -I_{K_\varepsilon}v_\varepsilon
  \right)
  \circ\Psi_{T_\varepsilon}
  =w_\varepsilon
   -I_{T_\varepsilon}w_\varepsilon.
  \label{eq:locality-sharp-error-pullback}
\end{align}
Furthermore,
\begin{align*}
  D_T^2w_\varepsilon
  =2I_{V_{T_\varepsilon}}.
\end{align*}
Thus, for any unit vector
$\ell\in V_{T_\varepsilon}$,
\begin{align*}
  \nabla_T(\partial_\ell w_\varepsilon)=2\ell,
  \quad
  |\partial_\ell w_\varepsilon|_
  {H^1(T_\varepsilon)}
  =2|T_\varepsilon|_2^{1/2}
  =2s_\varepsilon\sqrt{A_*}.
\end{align*}
Therefore, \eqref{eq:sharpness-common-directional-transfer} gives, for the two unit edge directions,
\begin{align*}
  \mathscr G_i(v_\varepsilon;
  K_\varepsilon)
  =2s_\varepsilon\sqrt{A_*},
  \quad i=1,2.
\end{align*}
Therefore, the asymmetric directional scale is
\begin{align}
  \mathcal A_{1,\varepsilon}
  &:=
  \mu_\varepsilon
  h_{1,\varepsilon}\mathscr G_1
  +2h_{2,\varepsilon}\mathscr G_2
  =2s_\varepsilon^2h_*\sqrt{A_*}(\mu_*+2).
  \label{eq:locality-sharp-scale}
\end{align}

\begin{theorem}[Sharpness of the radial-transfer exponent]
  \label{thm:sharpness-locality-transfer}
  As $\varepsilon\downarrow0$,
  \begin{align}
    \frac{|v_\varepsilon
      -I_{K_\varepsilon}v_\varepsilon|_{H^1(K_\varepsilon)}}
      {\chi_\varepsilon^{1/2}
       \mathcal A_{1,\varepsilon}}
    \longrightarrow
    C_{\mathrm{loc}}
    :=\frac{
      \left(\displaystyle\int_{T_*}|\xi|^3\,\diff\xi\right)^{1/2}}
      {h_*\sqrt{A_*}(\mu_*+2)}
    >0.
    \label{eq:locality-sharp-limit}
  \end{align}
  Consequently, for any $\theta<1/2$,
  \begin{align}
    \frac{|v_\varepsilon
      -I_{K_\varepsilon}v_\varepsilon|_{H^1(K_\varepsilon)}}
      {\chi_\varepsilon^{\theta}
       \mathcal A_{1,\varepsilon}}
    \longrightarrow\infty.
    \label{eq:locality-subcritical-divergence}
  \end{align}
  Thus, no exponent $\theta<1/2$ can replace $1/2$ in \eqref{eq:local-anisotropic-H1} while retaining the same directional scale and a constant independent of $\varepsilon$.  In this precise sense, the exponent $1/2$ is optimal even for equilateral chordal triangles.
\end{theorem}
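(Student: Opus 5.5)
The plan is to compute the numerator exactly with the pointwise energy splitting of Lemma~\ref{lem:sharp-norm-transfer}. By \eqref{eq:locality-sharp-error-pullback}, the pullback of $v_\varepsilon-I_{K_\varepsilon}v_\varepsilon$ to $T_\varepsilon$ is $w_\varepsilon-I_{T_\varepsilon}w_\varepsilon$. By \eqref{eq:locality-sharp-error}, its intrinsic gradient is $g=2(y-c_\varepsilon)=2z$.

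At every $y$ with $z\neq0$ we therefore have $g\cdot e_{\mathrm{rad}}(y)=2|z|$ and $g\cdot e_{\mathrm{tan}}(y)=0$. So \eqref{eq:pointwise-energy-splitting} gives the exact density
\begin{align*}
  J_T(y)\,|\nabla_{\Sph}(v_\varepsilon-I_{K_\varepsilon}v_\varepsilon)(x)|^2
  =\frac{r}{\varepsilon}\,4|z|^2,
  \quad r=|y|=\sqrt{\varepsilon^2+|z|^2}.
\end{align*}
The set $z=0$ is a single point, so it does not affect the integral. The area formula then yields
\begin{align*}
  |v_\varepsilon-I_{K_\varepsilon}v_\varepsilon|_{H^1(K_\varepsilon)}^2
  =\frac4\varepsilon\int_{T_\varepsilon}|z|^2\sqrt{\varepsilon^2+|z|^2}\,\diff s.
\end{align*}

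Next I would rescale by $z=s_\varepsilon\xi$ with $\xi\in T_*$, which has Jacobian $s_\varepsilon^2$. This gives
\begin{align*}
  |v_\varepsilon-I_{K_\varepsilon}v_\varepsilon|_{H^1(K_\varepsilon)}^2
  =\frac{4s_\varepsilon^4}{\varepsilon}\int_{T_*}|\xi|^2
   \sqrt{\varepsilon^2+s_\varepsilon^2|\xi|^2}\,\diff\xi .
\end{align*}
As $\varepsilon\downarrow0$ we have $s_\varepsilon\to R$. The integrand converges pointwise to $R|\xi|^3$ and is bounded by $R|\xi|^2\sqrt{1+|\xi|^2}$. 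Dominated convergence therefore shows that the integral equals $R\int_{T_*}|\xi|^3\,\diff\xi\,(1+o(1))$.

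For the denominator, \eqref{eq:locality-sharp-scale} together with $\chi_\varepsilon=R/\varepsilon$ gives
\begin{align*}
  \chi_\varepsilon\mathcal A_{1,\varepsilon}^2
  =\frac{R}{\varepsilon}\,4s_\varepsilon^4h_*^2A_*(\mu_*+2)^2 .
\end{align*}
The factors $4s_\varepsilon^4/\varepsilon$ cancel. The remaining factor $R$ cancels against the factor $R$ coming from the limit of the integral. The squared quotient therefore tends to
\begin{align*}
  \frac{\int_{T_*}|\xi|^3\,\diff\xi}{h_*^2A_*(\mu_*+2)^2}=C_{\mathrm{loc}}^2 .
\end{align*}
This limit is positive because $|\xi|>0$ almost everywhere on $T_*$. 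Taking square roots proves \eqref{eq:locality-sharp-limit}.

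For \eqref{eq:locality-subcritical-divergence}, I would write the quotient with $\chi_\varepsilon^\theta$ as $\chi_\varepsilon^{1/2-\theta}$ times the quotient in \eqref{eq:locality-sharp-limit}. The latter tends to $C_{\mathrm{loc}}>0$, while $\chi_\varepsilon^{1/2-\theta}=(R/\varepsilon)^{1/2-\theta}\to\infty$ for $\theta<1/2$, so the product diverges.

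The only delicate point is the exactness of the energy identity. The error gradient is everywhere purely radial in $V_T$, so only the maximal weight $r/d_T$ from \eqref{eq:pointwise-energy-splitting} appears. This is precisely why the upper transfer factor $\chi_T^{1/2}$ is attained asymptotically. The rest is the dominated-convergence limit, which is routine.
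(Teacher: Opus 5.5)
Your proposal is correct and follows essentially the same route as the paper's proof: the exact energy splitting \eqref{eq:pointwise-energy-splitting} with the purely radial error gradient $2z$, the rescaling $z=s_\varepsilon\xi$, dominated convergence, and then multiplication by $\chi_\varepsilon^{1/2-\theta}\to\infty$. The only difference is cosmetic: the paper pulls out $s_\varepsilon^5$ and writes the limit factor as $(s_\varepsilon/R)^{1/2}\to1$, whereas you keep $s_\varepsilon^4$ and cancel the factor $R$ through the limit of the integral.
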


\begin{proof}
  We set $z:=y-c_\varepsilon$ and $r:=|y|$. Because $c_\varepsilon\cdot z=0$, one has $r^2=\varepsilon^2+|z|^2$. From \eqref{eq:locality-sharp-error-pullback} and \eqref{eq:locality-sharp-error}, the intrinsic gradient of the pullback of the spherical interpolation error is $2z$.  For $z\neq0$, this vector is aligned with the in-plane radial singular direction $e_{\mathrm{rad}}=z/|z|$ and has no component in the orthogonal tangential direction. Therefore, the exact energy splitting \eqref{eq:pointwise-energy-splitting} gives
  \begin{align*}
    |v_\varepsilon
      -I_{K_\varepsilon}v_\varepsilon|_
      {H^1(K_\varepsilon)}^2
    =4\int_{T_\varepsilon}
      \frac{|y|}{\varepsilon}|z|^2\,\diff s(y).
  \end{align*}
  The change of variables $z=s_\varepsilon\xi$ then yields
  \begin{align}
    |v_\varepsilon
      -I_{K_\varepsilon}v_\varepsilon|_{H^1(K_\varepsilon)}^2
    =\frac{4s_\varepsilon^5}{\varepsilon}
      \int_{T_*}
      \sqrt{(\varepsilon/s_\varepsilon)^2+|\xi|^2}
      |\xi|^2\,\diff\xi.
    \label{eq:locality-sharp-energy}
  \end{align}
  Dividing its square root by $\chi_\varepsilon^{1/2} \mathcal A_{1,\varepsilon}$ yields
  \begin{align*}
    \left(\frac{s_\varepsilon}{R}\right)^{1/2}
    \frac{
      \left(
      \displaystyle\int_{T_*}
      \sqrt{(\varepsilon/s_\varepsilon)^2+|\xi|^2}
      |\xi|^2\,\diff\xi
      \right)^{1/2}}
      {h_*\sqrt{A_*}(\mu_*+2)}.
  \end{align*}
  Here, $s_\varepsilon/R\to1$ and $\varepsilon/s_\varepsilon\to0$.  For all sufficiently small $\varepsilon$, the integrand is bounded by $\sqrt{1+|\xi|^2}\,|\xi|^2$, which is integrable on the bounded triangle $T_*$. Dominated convergence gives \eqref{eq:locality-sharp-limit}, whose limit is positive because $T_*$ has positive area. Multiplication by $\chi_\varepsilon^{1/2-\theta}\to\infty$ proves \eqref{eq:locality-subcritical-divergence}.
\end{proof}

\begin{remark}[Role of spherical locality]
  The exponent $1/2$ is sharp even when the chordal triangles are equilateral. Under uniformly chordally semi-regular refinement, Corollary~\ref{cor:refining-locality} gives $\chi_T\to1$, so the radial factor tends to one. For the same test function, the transported $L^2$ directional scale is $6s_\varepsilon^3h_*^2\sqrt{A_*}$, whereas $|w_\varepsilon-I_{T_\varepsilon}w_\varepsilon|\leq s_\varepsilon^2$ and $|K_\varepsilon|_2\leq2\pi R^2$. The spherical $L^2$ error divided by that scale therefore remains bounded as $\varepsilon\downarrow0$, so this family does not determine the optimal $L^2$ radial exponent. Optimality of the factor $\chi_T$ in the $L^2$ interpolation estimate remains open.
\end{remark}

\section{Global interpolation on exact spherical triangulations}
\label{sec:global}
We pass from the local estimates to compatible triangulations of $\Sph_R^2$. We first isolate the edge-parametrisation condition required for a conforming mapped finite-element space and then verify that it is automatic for the radial spherical maps used here. In the exactly integrated formulation, both the finite-element functions and the variational integrals live on the sphere; the chordal polyhedral surface is only a parametrisation and causes no geometric consistency error.

\subsection{Mesh notation and compatible triangulations}
For $K\in\mathscr K_h$, $T_K$ denotes the chordal core. Directions and transported quantities carrying the index $K$ are defined from $T_K$, and $h$, $H_h$, $\mu_h$, and $\vartheta_h$ denote the corresponding meshwise maxima.

To assemble the local mapped spaces conformingly, coincidence of the physical spherical edges is not by itself sufficient; the adjacent element parametrisations must induce the same polynomial trace space.

\begin{definition}[Compatible exact spherical triangulation]
  A finite collection $\mathscr K_h$ of exact spherical triangles is called a compatible exact spherical triangulation of $\Sph_R^2$ if
  \begin{align*}
    \Sph_R^2=\bigcup_{K\in\mathscr K_h}K,
  \end{align*}
  the interiors of distinct elements relative to $\Sph_R^2$ are disjoint, and for any distinct $K,K'\in\mathscr K_h$, the intersection $K\cap K'$ is either empty, a common vertex, or a common shorter great-circle edge. Each element $K$ is the radial image of the chordal triangle $T_K$ formed by its vertices, with $d_{T_K}>0$.
\end{definition}

\begin{definition}[Edge-parametrisation compatibility]
  Let two mapped elements $K^+$ and $K^-$ share a physical edge $E$, and let $\widehat e^+$ and $\widehat e^-$ be the corresponding edges of the reference triangle. We say that the element maps are edge-parametrically compatible across $E$ if there exists an affine bijection
  \begin{align*}
    R_E:\widehat e^+\longrightarrow\widehat e^-
  \end{align*}
  mapping reference endpoints that represent the same physical vertex to each other and satisfying
  \begin{align}
    F_{K^+}|_{\widehat e^+}
    =
    F_{K^-}|_{\widehat e^-}\circ R_E.
    \label{eq:general-edge-parametrisation-compatibility}
  \end{align}
  The map $R_E$ includes a possible reversal of edge orientation.
\end{definition}

\begin{remark}[Geometric coincidence versus parametric compatibility]
  For general curved element maps, equality of the physical edge images does not ensure equality of the polynomial trace spaces. Condition~\eqref{eq:general-edge-parametrisation-compatibility} gives matching traces because $R_E$ is affine. For the radial construction, this condition follows from the shared chord, as shown in Lemma~\ref{lem:compatible-traces}.
\end{remark}

Throughout the remainder of this section, $\mathscr K_h$ denotes a compatible exact spherical triangulation of $\Sph_R^2$.

For any $K\in\mathscr K_h$, we write
\begin{align*}
  \ell_{K,i}:=\ell_{T_K,i},
  \quad
  q_{K,i}:=q_{T_K,i},
  \quad i=1,2.
\end{align*}
All local transported fields and weighted norms below are understood with $T=T_K$. For $v\in H^2(\Sph_R^2)$ and $i,j=1,2$, we define
\begin{subequations} \label{eq:global-weighted-directional-quantities}
\begin{align}
\displaystyle
\mathscr E_{K,ij}(v)
  &:=
  \|\mathscr Q_{\ell_{K,i},\ell_{K,j}}v\|_{0,K;T_K},
  \\
  \mathscr G_{K,i}(v)
  &:=
  \left(
    \sum_{k=1}^2
    \|\mathscr Q_{\ell_{K,i},q_{K,k}}v\|_{0,K;T_K}^2
  \right)^{1/2},
\end{align}
\end{subequations}
and, for unit vectors $\xi,\eta\in V_{T_K}$, we set
\begin{align}
  \mathscr H_{K,\xi,\eta}(v)
  &:=
  \|\nabla_{\Sph}^2v[e_\xi,e_\eta]\|_{L^2(K)},
  \notag\\
  \mathscr P_{K,\xi}(v)
  &:=
  \|\partial_{e_\xi}^{\Sph}v\|_{L^2(K)}.
  \label{eq:global-intrinsic-directional-quantities}
\end{align}
Thus, these are precisely the elementwise quantities defined in \eqref{eq:local-directional-second-data}, \eqref{eq:local-directional-H1-data}, and \eqref{eq:intrinsic-directional-data}, with $T=T_K$.

We set
\begin{align}
  h&:=\max_{K\in\mathscr K_h}h_{T_K},
  \quad
  H_h:=\max_{K\in\mathscr K_h}H_{T_K},
  \notag\\
  \mu_h&:=\max_{K\in\mathscr K_h}\mu_{T_K},
  \quad
  \vartheta_h:=\max_{K\in\mathscr K_h}\vartheta_{T_K}.
  \label{eq:global-mesh-parameters}
\end{align}
Because $\vartheta_{T_K}=H_{T_K}/(4R)$ elementwise, these definitions give
\begin{align*}
  \vartheta_h=\frac{H_h}{4R}.
\end{align*}

For the present radial construction, the edge-parametrisation condition requires no additional mesh assumption.

\begin{lemma}[Automatic edge compatibility of radial spherical maps]
  \label{lem:compatible-traces}
  Let $K^+$ and $K^-$ share a spherical edge $E$ with endpoints $p$ and $q$, and let $\widehat e^+$ and $\widehat e^-$ be the corresponding reference edges. Let
  \begin{align*}
    R_E:\widehat e^+\longrightarrow\widehat e^-
  \end{align*}
  be the unique affine bijection that maps reference endpoints representing the same physical endpoints $p$ and $q$ to each other. Then,
  \begin{align}
    F_{K^+}|_{\widehat e^+}
    =
    F_{K^-}|_{\widehat e^-}\circ R_E.
    \label{eq:radial-edge-parametrisation-compatibility}
  \end{align}
  Thus, the general edge-parametrisation condition \eqref{eq:general-edge-parametrisation-compatibility} holds automatically. Furthermore,
  \begin{align*}
    G_{T_{K^+}}|_E
    =
    G_{T_{K^-}}|_E,
  \end{align*}
  and their common image is the chord $[p,q]$. Consequently, the traces on $E$ of the mapped nodal basis functions associated with $p$ and $q$ coincide, while the mapped basis function associated with the third vertex of either adjacent element vanishes on $E$.
\end{lemma}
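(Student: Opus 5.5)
The plan is to make every map explicit on the shared edge and check that both sides of \eqref{eq:radial-edge-parametrisation-compatibility} reduce to the same formula. Let $\widehat e^\pm$ be parametrised affinely by $t\in[0,1]$, running from the reference vertex representing $p$ to the one representing $q$; call these parametrisations $\hat\gamma^\pm(t)$. Since $R_E$ is the unique affine bijection matching endpoints, $R_E\circ\hat\gamma^+=\hat\gamma^-$. The affine maps $\Phi_{T_{K^\pm}}$ send vertices to vertices, so $\Phi_{T_{K^\pm}}(\hat\gamma^\pm(t))=(1-t)p+tq$. This is the same point of the shared chord $[p,q]$ for both elements, because the chord depends only on the endpoints $p,q\in\Sph_R^2$ and not on the element. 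Applying $\Psi$, which is the same ambient formula $Ry/|y|$ for both elements, gives
\begin{align*}
  F_{K^\pm}(\hat\gamma^\pm(t))=R\frac{(1-t)p+tq}{|(1-t)p+tq|}.
\end{align*}
This yields \eqref{eq:radial-edge-parametrisation-compatibility} after composing with $R_E$. The right-hand side is well defined because $0\notin[p,q]$, as in Lemma~\ref{lem:radial-basic}.

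For the inverse maps, Lemma~\ref{lem:radial-basic} applied to each element shows that $\Psi_{T_{K^\pm}}$ maps $[p,q]$ bijectively onto the shorter arc joining $p$ and $q$. By the definition of a compatible triangulation, this arc is $E$. Hence $G_{T_{K^\pm}}|_E=(\Psi|_{[p,q]})^{-1}$ for both signs. The same conclusion can be read off directly: for $x\in E$ the point $G_{T_{K^\pm}}(x)$ is the unique point of $[p,q]$ on the ray through $x$. Consequently the two restrictions coincide and their common image is $[p,q]$.

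The trace statement then follows from \eqref{eq:spherical-barycentric-functions}, namely $\lambda_i^{K^\pm}=\lambda_i^{T_{K^\pm}}\circ G_{T_{K^\pm}}$. On $[p,q]$, the chordal barycentric function of $T_{K^\pm}$ attached to $p$ equals $1-t$ at the point $(1-t)p+tq$, and the one attached to $q$ equals $t$; both depend only on the chord. The function attached to the third vertex vanishes on the opposite edge $[p,q]$. Composing with the common map $G|_E$ gives matching traces for the $p$ and $q$ basis functions and a vanishing trace for the third one.

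The main point requiring care is the identification of the arc with the physical edge $E$. Lemma~\ref{lem:radial-basic} guarantees that the image is the closed shorter arc, and the definition of a compatible triangulation states that $K^+\cap K^-$ is a common shorter great-circle edge. These two facts have to be matched. The only other subtlety is bookkeeping of orientation through $R_E$, which is handled by the endpoint-based parametrisation above.
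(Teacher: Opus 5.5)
Your proposal is correct and follows essentially the same route as the paper. Both arguments use the same four steps. First, the two affine maps agree on the shared chord $[p,q]$ after composition with $R_E$. Second, both radial corrections are the same map $y\mapsto Ry/|y|$ there. Third, Lemma~\ref{lem:radial-basic} makes the two inverse radial maps coincide on $E$ with image $[p,q]$. Fourth, the chordal barycentric coordinates restrict to the same linear coordinates on the chord, while the third one vanishes there.

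Your explicit parametrisation by $t$ spells out details that the paper states more briefly. So does your step matching the shorter arc with the physical edge $E$ via the definition of a compatible triangulation.
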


\begin{proof}
  The chordal triangles $T_{K^+}$ and $T_{K^-}$ share the straight edge $[p,q]$. Since the restrictions of $\Phi_{T_{K^+}}$ and $\Phi_{T_{K^-}}$ to the corresponding reference edges are affine and have the same physical endpoint values after composition with $R_E$, one has
  \begin{align*}
    \Phi_{T_{K^+}}|_{\widehat e^+}
    =
    \Phi_{T_{K^-}}|_{\widehat e^-}\circ R_E.
  \end{align*}
  On the common chord, both curved corrections are restrictions of the same radial map
  \begin{align*}
    \rho(y):=R\frac{y}{|y|},
  \end{align*}
  so that
  \begin{align*}
    \Psi_{T_{K^+}}(y)
    =
    \rho(y)
    =
    \Psi_{T_{K^-}}(y),
    \quad y\in[p,q].
  \end{align*}
  Using $F_K=\Psi_{T_K}\circ\Phi_{T_K}$ therefore gives \eqref{eq:radial-edge-parametrisation-compatibility}.

  By Lemma~\ref{lem:radial-basic}, the radial projection restricts to a bijection from $[p,q]$ onto $E$. Therefore, for any $x\in E$, there exists a unique $y\in[p,q]$ such that
  \begin{align*}
    x=R\frac{y}{|y|}.
  \end{align*}
  Because the chord belongs to both $T_{K^+}$ and $T_{K^-}$, the inverse radial maps satisfy
  \begin{align*}
    G_{T_{K^+}}(x)
    =
    y
    =
    G_{T_{K^-}}(x).
  \end{align*}
  Thus, the two inverse radial maps agree on $E$.

  The restrictions to $[p,q]$ of the affine barycentric coordinates associated with $p$ and $q$ are the same linear barycentric coordinates on that chord, whereas the barycentric coordinate associated with the third vertex vanishes there. Composing these identities with the common inverse radial map on $E$ gives the asserted trace identities.
\end{proof}

We define
\begin{align*}
  V_h
  :=
  \left\{
    v_h\in \mathcal{C}^0(\Sph_R^2): \
    v_h|_K\in\mathbb P_F^1(K)
    \text{ for any }K\in\mathscr K_h
  \right\}.
\end{align*}
By Lemma~\ref{lem:compatible-traces}, the mapped nodal basis functions have compatible traces across any common edge. Therefore, local mapped functions with common vertex values have matching traces, and the usual global vertex numbering produces continuous mapped $\mathbb P^1$ functions. Because $d_{T_K}>0$, any function in $\mathbb P_F^1(K)$ is smooth up to the boundary of $K$ and hence belongs to $H^1(K)$. The standard patching property therefore yields
\begin{align*}
  V_h\subset H^1(\Sph_R^2).
\end{align*}

Because $H^2(\Sph_R^2)\hookrightarrow \mathcal{C}^0(\Sph_R^2)$, the nodal values of any $v\in H^2(\Sph_R^2)$ are well defined. We define the global nodal interpolant $I_h:H^2(\Sph_R^2)\to V_h$ elementwise as
\begin{align*}
  (I_hv)|_K:=I_K(v|_K),
  \quad K\in\mathscr K_h.
\end{align*}
Lemma~\ref{lem:compatible-traces} ensures that the local interpolants have matching traces on every common edge.

\subsection{Global directional anisotropic interpolation estimates}
Once conformity is established, the global anisotropic estimates follow by summing the local bounds elementwise.

\begin{theorem}[Global directional anisotropic interpolation]
  \label{thm:global-anisotropic-interpolation}
  Let $\mathscr K_h$ be a compatible exact spherical triangulation of $\Sph_R^2$. For $K\in\mathscr K_h$, we set 
\begin{subequations} \label{eq:global-directional-data}
\begin{align}
\displaystyle
\mathcal A_{0,K}(v)
    &:=\sum_{i,j=1}^2h_{T_K,i}h_{T_K,j}\mathscr E_{K,ij}(v),\\
    \mathcal A_{1,K}(v)
    &:=\mu_{T_K}h_{T_K,1}\mathscr G_{K,1}(v)
       +2h_{T_K,2}\mathscr G_{K,2}(v).
\end{align}
\end{subequations}
Then, for any $v\in H^2(\Sph_R^2)$,
  \begin{align}
    \|v-I_hv\|_{L^2(\Sph_R^2)}
    \leq C_{\mathrm{aff},0}
    \left(\sum_{K\in\mathscr K_h}
      \chi_{T_K}^2\mathcal A_{0,K}(v)^2\right)^{1/2},
    \label{eq:global-anisotropic-L2}
  \end{align}
  and
  \begin{align}
    |v-I_hv|_{H^1(\Sph_R^2)}
    \leq C_{\mathrm{aff},1}
    \left(\sum_{K\in\mathscr K_h}
      \chi_{T_K}\mathcal A_{1,K}(v)^2\right)^{1/2}.
    \label{eq:global-anisotropic-H1}
  \end{align}
\end{theorem}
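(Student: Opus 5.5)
The proof is a direct elementwise summation of Theorem~\ref{thm:final-local-anisotropic}. First, we note that $I_hv$ is well defined and satisfies $(I_hv)|_K=I_K(v|_K)$ for every $K\in\mathscr K_h$. This is the definition of the global interpolant, and Lemma~\ref{lem:compatible-traces} guarantees that the local pieces agree on shared edges, so $I_hv\in V_h\subset H^1(\Sph_R^2)$. Since the elements cover $\Sph_R^2$ and have pairwise disjoint interiors, and since their boundaries consist of finitely many great-circle arcs of surface measure zero, the squared norms split additively:
\begin{align*}
  \|v-I_hv\|_{L^2(\Sph_R^2)}^2
  =\sum_{K\in\mathscr K_h}\|v-I_Kv\|_{L^2(K)}^2,
  \quad
  |v-I_hv|_{H^1(\Sph_R^2)}^2
  =\sum_{K\in\mathscr K_h}|v-I_Kv|_{H^1(K)}^2 .
\end{align*}
For the $H^1$ identity, we use that the weak tangential gradient of the continuous, piecewise-$H^1$ function $v-I_hv$ coincides almost everywhere with the elementwise gradients. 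This is the standard patching property already invoked for $V_h\subset H^1(\Sph_R^2)$.

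Second, for each $K$ we have $v|_K\in H^2(K)$, because restriction of an $H^2(\Sph_R^2)$ function to a Lipschitz subdomain preserves $H^2$. We can therefore apply \eqref{eq:local-anisotropic-L2} and \eqref{eq:local-anisotropic-H1} with $T=T_K$. The local data $\mathscr E_{ij}(v|_K;K)$ and $\mathscr G_i(v|_K;K)$ are, by construction, the quantities $\mathscr E_{K,ij}(v)$ and $\mathscr G_{K,i}(v)$ of \eqref{eq:global-weighted-directional-quantities}. This gives
\begin{align*}
  \|v-I_Kv\|_{L^2(K)}\le C_{\mathrm{aff},0}\chi_{T_K}\mathcal A_{0,K}(v),
  \quad
  |v-I_Kv|_{H^1(K)}\le C_{\mathrm{aff},1}\chi_{T_K}^{1/2}\mathcal A_{1,K}(v).
\end{align*}
We then square these bounds, sum over $K$, and take square roots. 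Because $C_{\mathrm{aff},0}$ and $C_{\mathrm{aff},1}$ are independent of the element, they factor out of the sums, and we obtain \eqref{eq:global-anisotropic-L2} and \eqref{eq:global-anisotropic-H1}.

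The only step that needs any care is the additive splitting of the $H^1$ seminorm. It requires that $v-I_hv$ has no singular part of its gradient across element edges, which holds because both $v$ and $I_hv$ belong to $H^1(\Sph_R^2)$. Once conformity from Lemma~\ref{lem:compatible-traces} is in hand, this is routine, and everything else is bookkeeping.
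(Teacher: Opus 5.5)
Your proof is correct and follows the paper's argument: apply Theorem~\ref{thm:final-local-anisotropic} on each element to $v|_K\in H^2(K)$, and split the global $L^2$ norm and $H^1$ seminorm additively over elements using disjoint interiors and $v-I_hv\in H^1(\Sph_R^2)$. Then square the local bounds, sum over $K$, and take square roots.
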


No quasi-uniformity is assumed in this theorem, and its constants are independent of the number of elements and of all element aspect ratios. Neither $\mu_h$ nor $\vartheta_h$ is required to be uniformly bounded at this exact anisotropic level; the elementwise factors remain visible in \eqref{eq:global-anisotropic-L2}--\eqref{eq:global-anisotropic-H1}.

\begin{proof}
For each $K\in\mathscr K_h$, the restriction $v|_K$ belongs to $H^2(K)$, and the definition of $I_h$ gives
\begin{align*}
  (v-I_hv)|_K
  =
  v|_K-I_K(v|_K).
\end{align*}
Therefore, Theorem~\ref{thm:final-local-anisotropic} applies on any element. Because the element interiors are pairwise disjoint,
\begin{align*}
  \|v-I_hv\|_{L^2(\Sph_R^2)}^2
  =
  \sum_{K\in\mathscr K_h}
  \|v-I_hv\|_{L^2(K)}^2.
\end{align*}
Furthermore, $v-I_hv\in H^1(\Sph_R^2)$, and hence
\begin{align*}
  |v-I_hv|_{H^1(\Sph_R^2)}^2
  =
  \sum_{K\in\mathscr K_h}
  |v-I_hv|_{H^1(K)}^2.
\end{align*}
Squaring the corresponding local estimates, summing over $K\in\mathscr K_h$, and taking square roots gives \eqref{eq:global-anisotropic-L2} and \eqref{eq:global-anisotropic-H1}.
\end{proof}

For $v\in H^2(\Sph_R^2)$, we define the expanded intrinsic quantities
\begin{align}
  \mathcal B_{0,K}(v)
  :=\sum_{i,j=1}^2h_{T_K,i}h_{T_K,j}
  \left[\chi_{T_K}\mathscr H_{K,\ell_{K,i},\ell_{K,j}}(v) + \kappa_{T_K}
          (\mathscr P_{K,\ell_{K,i}}(v)
           +\mathscr P_{K,\ell_{K,j}}(v)) \right],
  \label{eq:global-expanded-L2-data}
\end{align}
and, for $i=1,2$,
\begin{align*}
  \mathcal C_{K,i}(v)
  :=\left\{\sum_{k=1}^2
  \left[\chi_{T_K}\mathscr H_{K,\ell_{K,i},q_{K,k}}(v)
        +\kappa_{T_K}
         (\mathscr P_{K,\ell_{K,i}}(v)
          +\mathscr P_{K,q_{K,k}}(v))\right]^2\right\}^{1/2},
\end{align*}
and we set
\begin{align}
  \mathcal B_{1,K}(v)
  :=\mu_{T_K}h_{T_K,1}\mathcal C_{K,1}(v)
    +2h_{T_K,2}\mathcal C_{K,2}(v).
  \label{eq:global-expanded-H1-data}
\end{align}

\begin{corollary}[Expanded global directional form]
  \label{cor:global-expanded-directional}
  Let $\mathscr K_h$ be a compatible exact spherical triangulation of $\Sph_R^2$. Then, for any $v\in H^2(\Sph_R^2)$,
  \begin{align}
    \|v-I_hv\|_{L^2(\Sph_R^2)}
    \leq C_{\mathrm{aff},0}
    \left(\sum_{K\in\mathscr K_h}
      \chi_{T_K}^2\mathcal B_{0,K}(v)^2\right)^{1/2},
    \label{eq:global-expanded-L2}
  \end{align}
  and
  \begin{align}
    |v-I_hv|_{H^1(\Sph_R^2)}
    \leq C_{\mathrm{aff},1}
    \left(\sum_{K\in\mathscr K_h}
      \chi_{T_K}\mathcal B_{1,K}(v)^2\right)^{1/2}.
    \label{eq:global-expanded-H1}
  \end{align}
\end{corollary}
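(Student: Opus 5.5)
The plan is to combine Theorem~\ref{thm:global-anisotropic-interpolation} with the elementwise bounds of Theorem~\ref{thm:standard-directional-form}. Since all quantities are nonnegative and the global estimates \eqref{eq:global-anisotropic-L2}--\eqref{eq:global-anisotropic-H1} are monotone in the elementwise data, it suffices to show
\begin{align*}
  \mathcal A_{0,K}(v)\leq\mathcal B_{0,K}(v),
  \quad
  \mathcal A_{1,K}(v)\leq\mathcal B_{1,K}(v),
  \quad K\in\mathscr K_h .
\end{align*}

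For each $K$ I will apply Theorem~\ref{thm:standard-directional-form} with $T=T_K$ to the restriction $v|_K\in H^2(K)$. Its hypotheses are those of Theorem~\ref{thm:final-local-anisotropic}, namely $d_{T_K}>0$ and $v|_K\in H^2(K)$, and both hold for a compatible triangulation.

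\emph{$L^2$ term.} Bound \eqref{eq:directional-second-data-bound} gives $\mathscr E_{K,ij}(v)\leq\mathscr E^{\mathrm{std}}_{ij}(v|_K;K)$. By \eqref{eq:global-intrinsic-directional-quantities}, the right-hand side is exactly the bracket in \eqref{eq:global-expanded-L2-data}. Multiplying by $h_{T_K,i}h_{T_K,j}\geq0$ and summing over $i,j$ gives $\mathcal A_{0,K}(v)\leq\mathcal B_{0,K}(v)$.

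\emph{$H^1$ term.} Similarly, \eqref{eq:directional-H1-data-bound} gives $\mathscr G_{K,i}(v)\leq\mathcal C_{K,i}(v)$. Since $\mu_{T_K}h_{T_K,1}$ and $2h_{T_K,2}$ are nonnegative, this yields $\mathcal A_{1,K}(v)\leq\mathcal B_{1,K}(v)$.

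Finally, I square these bounds, multiply by the nonnegative weights $\chi_{T_K}^2$ and $\chi_{T_K}$ respectively, sum over $K\in\mathscr K_h$, and take square roots. Inserting the result into Theorem~\ref{thm:global-anisotropic-interpolation} gives \eqref{eq:global-expanded-L2} and \eqref{eq:global-expanded-H1} with the same constants $C_{\mathrm{aff},0}$ and $C_{\mathrm{aff},1}$.

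There is no real obstacle. The only step needing care is checking that the global notation ($\ell_{K,i}$, $q_{K,k}$, $\kappa_{T_K}$, $\mathscr H_{K,\cdot,\cdot}$, $\mathscr P_{K,\cdot}$) matches the local quantities in Theorem~\ref{thm:standard-directional-form} term by term. The paper states this identification explicitly after \eqref{eq:global-intrinsic-directional-quantities}.
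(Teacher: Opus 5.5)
Your proposal is correct and matches the paper's proof: apply Theorem~\ref{thm:standard-directional-form} on each element to get $\mathcal A_{0,K}(v)\leq\mathcal B_{0,K}(v)$ and $\mathcal A_{1,K}(v)\leq\mathcal B_{1,K}(v)$, then substitute these bounds into Theorem~\ref{thm:global-anisotropic-interpolation}. The extra checks you include are details the paper leaves implicit: nonnegativity of the weights, $v|_K\in H^2(K)$, and the term-by-term identification of $\mathcal C_{K,i}$ and the bracket in $\mathcal B_{0,K}$ with the local standard data.
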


\begin{proof}
  Theorem~\ref{thm:standard-directional-form}, applied elementwise, gives
  \begin{align*}
    \mathcal A_{0,K}(v)\leq\mathcal B_{0,K}(v),
    \quad
    \mathcal A_{1,K}(v)\leq\mathcal B_{1,K}(v).
  \end{align*}
  Substituting these bounds into \eqref{eq:global-anisotropic-L2} and \eqref{eq:global-anisotropic-H1} proves the result.
\end{proof}

\subsection{Standard rates and the intrinsic spherical criterion}
We first derive standard global rates under explicit chordal semi-regularity and radial-locality bounds. We then use the geometric interface of Section~\ref{subsec:geometric-interface} to replace these assumptions by the intrinsic spherical maximum-angle condition and an explicit bound on $h/R$.

\begin{corollary}[Global isotropic consequences and standard rates]
  \label{cor:global-standard-rates}
  Let $\eta_0\in(0,1)$ be fixed and suppose that $\vartheta_h\leq\eta_0$. Then, for any $v\in H^2(\Sph_R^2)$,
  \begin{align}
    \|v-I_hv\|_{L^2(\Sph_R^2)}
    \leq
    C(\eta_0)h^2
    \left(
      \|\nabla_{\Sph}^2v\|_{L^2(\Sph_R^2)}
      +
      \frac{H_h}{R^2}
      \|\nabla_{\Sph}v\|_{L^2(\Sph_R^2)}
    \right),
    \label{eq:global-standard-L2}
  \end{align}
  and
  \begin{align}
    |v-I_hv|_{H^1(\Sph_R^2)}
    \leq
    C(\eta_0)\mu_hh
    \left(
      \|\nabla_{\Sph}^2v\|_{L^2(\Sph_R^2)}
      +
      \frac{H_h}{R^2}
      \|\nabla_{\Sph}v\|_{L^2(\Sph_R^2)}
    \right).
    \label{eq:global-standard-H1}
  \end{align}
  If, in addition, $\gamma_0\geq2$ is fixed and $\mu_h\leq\gamma_0$, then $H_h\leq\gamma_0h$, and hence
  \begin{align}
    \|v-I_hv\|_{L^2(\Sph_R^2)}
    \leq
    C(\gamma_0,\eta_0)
    \left(
      h^2
      \|\nabla_{\Sph}^2v\|_{L^2(\Sph_R^2)}
      +
      \frac{h^3}{R^2}
      \|\nabla_{\Sph}v\|_{L^2(\Sph_R^2)}
    \right),
    \label{eq:global-semiregular-L2}
  \end{align}
  and
  \begin{align}
    |v-I_hv|_{H^1(\Sph_R^2)}
    \leq
    C(\gamma_0,\eta_0)
    \left(
      h
      \|\nabla_{\Sph}^2v\|_{L^2(\Sph_R^2)}
      +
      \frac{h^2}{R^2}
      \|\nabla_{\Sph}v\|_{L^2(\Sph_R^2)}
    \right).
    \label{eq:global-semiregular-H1}
  \end{align}
  The constants are independent of $h$, $R$, the number of elements, and all element aspect ratios.
\end{corollary}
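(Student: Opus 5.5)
The plan is to sum the local isotropic bounds of Corollary~\ref{cor:locality-only-orders} over the elements. Theorem~\ref{thm:global-anisotropic-interpolation} already reduces the global problem to elementwise quantities. The hypothesis $\vartheta_h\leq\eta_0$ gives $\vartheta_{T_K}\leq\eta_0$ for every $K\in\mathscr K_h$. Hence Corollary~\ref{cor:locality-only-orders} applies on each element with the same constant $C(\eta_0)$.

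First, I will use that for $v\in H^2(\Sph_R^2)$ the restriction $v|_K$ lies in $H^2(K)$ and $(v-I_hv)|_K=v|_K-I_K(v|_K)$. The local bound \eqref{eq:isotropic-local-L2} then gives
\begin{align*}
  \|v-I_hv\|_{L^2(K)}\leq C(\eta_0)h^2\left(\|\nabla_{\Sph}^2v\|_{L^2(K)}+\frac{H_h}{R^2}\|\nabla_{\Sph}v\|_{L^2(K)}\right),
\end{align*}
where I have replaced $h_{T_K}\leq h$ and $H_{T_K}\leq H_h$. The $H^1$ bound is obtained the same way from \eqref{eq:isotropic-local-H1}, using $\mu_{T_K}\leq\mu_h$. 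To sum, I will square these inequalities, apply $(a+b)^2\leq 2a^2+2b^2$, and add over $K$. Since the interiors are disjoint, $\sum_K\|\cdot\|_{L^2(K)}^2=\|\cdot\|_{L^2(\Sph_R^2)}^2$. For the $H^1$ seminorm I also need $v-I_hv\in H^1(\Sph_R^2)$, which holds because $V_h\subset H^1(\Sph_R^2)$ by Lemma~\ref{lem:compatible-traces}. Taking square roots and using $\sqrt{a^2+b^2}\leq a+b$ gives \eqref{eq:global-standard-L2} and \eqref{eq:global-standard-H1}, with the constant enlarged by a factor $\sqrt2$.

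For the semi-regular case, I will use $H_{T_K}=\mu_{T_K}h_{T_K}\leq\gamma_0 h$, so that $H_h\leq\gamma_0h$. Substituting this into \eqref{eq:global-standard-L2} gives \eqref{eq:global-semiregular-L2} directly. For \eqref{eq:global-semiregular-H1}, I will combine $\mu_hh\leq\gamma_0h$ and $\mu_hhH_h\leq\gamma_0^2h^2$. Alternatively, both rates can be obtained by summing Corollary~\ref{cor:standard-local-orders} elementwise in the same way.

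None of these steps is a real obstacle. The only point needing care is that each element has its own $\mu_{T_K}$, $H_{T_K}$ and $h_{T_K}$, and these must be replaced by the meshwise maxima before summing. The constants then stay uniform, depending only on $\eta_0$ (and $\gamma_0$), and independent of the number of elements and of the aspect ratios.
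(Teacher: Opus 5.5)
Your proposal is correct and follows the paper's proof essentially step by step: you apply Corollary~\ref{cor:locality-only-orders} on each element, replace $h_{T_K}$, $H_{T_K}$ and $\mu_{T_K}$ by the meshwise maxima $h$, $H_h$ and $\mu_h$, sum the squared local bounds using the elementwise decomposition of the global norms, and then insert $H_h\leq\gamma_0h$. The only difference is cosmetic: the paper sums using the triangle inequality in $\ell^2(\mathscr K_h)$ and so avoids the factor $\sqrt2$ that you incur from $(a+b)^2\leq2a^2+2b^2$, but that factor is harmlessly absorbed into $C(\eta_0)$.
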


\begin{proof}
For $K\in\mathscr K_h$, we set
\begin{align*}
  a_K
  &:=
  \|\nabla_{\Sph}^2v\|_{L^2(K)},
  \quad
  b_K
  :=
  \|\nabla_{\Sph}v\|_{L^2(K)}.
\end{align*}
Because $\vartheta_{T_K}\leq\vartheta_h\leq\eta_0$, the local isotropic consequences apply on any element with the same constant $C(\eta_0)$. Using $h_{T_K}\leq h$, $H_{T_K}\leq H_h$, and $\mu_{T_K}\leq\mu_h$, we obtain
\begin{align*}
  \|v-I_hv\|_{L^2(K)}
  &\leq
  C(\eta_0)h^2
  \left(
    a_K+\frac{H_h}{R^2}b_K
  \right),
  \\
  |v-I_hv|_{H^1(K)}
  &\leq
  C(\eta_0)\mu_hh
  \left(
    a_K+\frac{H_h}{R^2}b_K
  \right).
\end{align*}
After squaring and summing these inequalities over the mesh, the triangle inequality in $\ell^2(\mathscr K_h)$ gives
\begin{align*}
  \left(
    \sum_{K\in\mathscr K_h}
    \left(
      a_K+\frac{H_h}{R^2}b_K
    \right)^2
  \right)^{1/2}
  &\leq
  \left(\sum_{K\in\mathscr K_h}a_K^2\right)^{1/2}
  +
  \frac{H_h}{R^2}
  \left(\sum_{K\in\mathscr K_h}b_K^2\right)^{1/2}
  \\
  &=
  \|\nabla_{\Sph}^2v\|_{L^2(\Sph_R^2)}
  +
  \frac{H_h}{R^2}
  \|\nabla_{\Sph}v\|_{L^2(\Sph_R^2)}.
\end{align*}
Together with the exact elementwise decompositions of the global norms, this proves \eqref{eq:global-standard-L2}--\eqref{eq:global-standard-H1}.

If $\mu_h\leq\gamma_0$, then
\begin{align*}
  H_h
  =
  \max_{K\in\mathscr K_h}
  \mu_{T_K}h_{T_K}
  \leq
  \gamma_0h.
\end{align*}
Substitution into the preceding two estimates, followed by absorption of the powers of $\gamma_0$ into $C(\gamma_0,\eta_0)$, gives \eqref{eq:global-semiregular-L2}--\eqref{eq:global-semiregular-H1}.
\end{proof}

\begin{corollary}[Global rates under the intrinsic spherical maximum-angle condition]
  \label{cor:global-intrinsic-criterion}
  Let $\{\mathscr K_h\}$ be a family of compatible exact spherical triangulations
  satisfying the intrinsic spherical maximum-angle condition of
  Definition~\ref{def:intrinsic-spherical-maximum-angle}. Thus, for some
  $c_{\mathrm{sr}}\in(0,1]$,
  \begin{align}
    \inf_h\inf_{K\in\mathscr K_h}t_K^{\mathrm{sr}}
    \geq c_{\mathrm{sr}}.
    \label{eq:global-intrinsic-criterion}
  \end{align}
  Let
  \begin{align*}
    h:=\max_{K\in\mathscr K_h}h_{T_K}.
  \end{align*}
  If $h/R\leq\sqrt3\,c_{\mathrm{sr}}/2$, then there exists a constant $C=C(c_{\mathrm{sr}})$, independent of $h$, $R$, the number of elements, and the element aspect ratios, such that
  \begin{align}
    \|v-I_hv\|_{L^2(\Sph_R^2)}
    &\leq
    C\left(
      h^2\|\nabla_{\Sph}^2v\|_{L^2(\Sph_R^2)}
      +\frac{h^3}{R^2}
       \|\nabla_{\Sph}v\|_{L^2(\Sph_R^2)}
    \right),
    \label{eq:global-intrinsic-L2}
    \\
    |v-I_hv|_{H^1(\Sph_R^2)}
    &\leq
    C\left(
      h\|\nabla_{\Sph}^2v\|_{L^2(\Sph_R^2)}
      +\frac{h^2}{R^2}
       \|\nabla_{\Sph}v\|_{L^2(\Sph_R^2)}
    \right)
    \label{eq:global-intrinsic-H1}
  \end{align}
  for any $v\in H^2(\Sph_R^2)$.
\end{corollary}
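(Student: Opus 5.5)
The plan is to reduce Corollary~\ref{cor:global-intrinsic-criterion} to the second half of Corollary~\ref{cor:global-standard-rates}. That requires explicit constants $\gamma_0$ and $\eta_0<1$, depending only on $c_{\mathrm{sr}}$, such that $\mu_h\leq\gamma_0$ and $\vartheta_h\leq\eta_0$.

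\emph{Chordal angle control.} Corollary~\ref{cor:intrinsic-partII-geometric-interface}, i.e.\ the comparison \eqref{eq:spherical-chordal-comparison} together with $\mu_K^{\mathrm{sr}}=2/t_K^{\mathrm{sr}}\leq 2/c_{\mathrm{sr}}$, gives
\begin{align*}
  \mu_{T_K}\leq\gamma_0:=\frac{4}{\sqrt3\,c_{\mathrm{sr}}}
\end{align*}
uniformly over all $K$ and $h$. Since $c_{\mathrm{sr}}\leq1$, this also gives $\gamma_0\geq 4/\sqrt3\geq2$, as Corollary~\ref{cor:global-standard-rates} requires.

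\emph{Locality bound.} Use $\vartheta_{T_K}=(\mu_{T_K}/4)(h_{T_K}/R)$, which follows from $H_T=\mu_Th_T$. Combined with the bound on $\mu_{T_K}$ it gives
\begin{align*}
  \vartheta_h\leq\frac{h/R}{\sqrt3\,c_{\mathrm{sr}}}.
\end{align*}
Under the hypothesis $h/R\leq\sqrt3\,c_{\mathrm{sr}}/2$, this yields $\vartheta_h\leq1/2=:\eta_0\in(0,1)$.

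This step is the one point needing care: the threshold in the statement is chosen exactly so that $\eta_0=1/2$ is strictly below one and independent of the mesh. It is also consistent with Theorem~\ref{thm:support-plane-localisation}, which forces $\vartheta_T<1$ automatically. We do not need $h/R\to0$, only this fixed bound, so $\chi_{T_K}\leq(1-1/4)^{-1/2}=2/\sqrt3$ uniformly.

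\emph{Conclusion.} Apply \eqref{eq:global-semiregular-L2}--\eqref{eq:global-semiregular-H1} with $\gamma_0=4/(\sqrt3\,c_{\mathrm{sr}})$ and $\eta_0=1/2$. The resulting constant $C(\gamma_0,\eta_0)$ depends only on $c_{\mathrm{sr}}$. Independence from $h$, $R$, the number of elements and the aspect ratios is inherited from Corollary~\ref{cor:global-standard-rates}, whose constants have exactly that independence. No further obstacle is expected, since everything reduces to substituting these two uniform parameter bounds.
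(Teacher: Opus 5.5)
Your proposal is correct and follows essentially the same route as the paper. The paper's proof likewise obtains $\mu_h\le 4/(\sqrt3\,c_{\mathrm{sr}})$ and $\vartheta_h\le h/(\sqrt3\,c_{\mathrm{sr}}R)\le 1/2$ from Corollary~\ref{cor:intrinsic-partII-geometric-interface}, then applies Corollary~\ref{cor:global-standard-rates} with $\eta_0=1/2$ and $\gamma_0=4/(\sqrt3\,c_{\mathrm{sr}})>2$.
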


\begin{proof}
  Corollary~\ref{cor:intrinsic-partII-geometric-interface} gives
  \begin{align*}
    \mu_h
    \leq
    \frac{4}{\sqrt3\,c_{\mathrm{sr}}},
    \quad
    \vartheta_h
    \leq
    \frac{h}{\sqrt3\,c_{\mathrm{sr}}R}.
  \end{align*}
  Therefore, whenever $h/R\leq\sqrt3\,c_{\mathrm{sr}}/2$, one has $\vartheta_h\leq1/2$ and may take $\eta_0=1/2$ in Corollary~\ref{cor:global-standard-rates}. Furthermore,
  \begin{align*}
    \gamma_0
    :=
    \frac{4}{\sqrt3\,c_{\mathrm{sr}}}
    \geq
    \frac4{\sqrt3}
    >2.
  \end{align*}
  Applying that corollary with this value of $\gamma_0$ proves \eqref{eq:global-intrinsic-L2}--\eqref{eq:global-intrinsic-H1}.
\end{proof}

\subsection{Anisotropic Laplace--Beltrami Galerkin application}
The Galerkin estimate inherits the interpolation hierarchy above. We first retain the fully anisotropic elementwise factors, then impose the intrinsic maximum-angle condition, and then use an explicit smallness condition on $h/R$ to obtain the standard $h$-rate.

For $v\in H^2(\Sph_R^2)$ and $K\in\mathscr K_h$, introduce the two directional scales
\begin{subequations}
  \label{eq:galerkin-unweighted-directional-scales}
\begin{align}
\displaystyle
  \mathcal U_{1,K}^{\mathrm{tr}}(v)
  &:=
  h_{T_K,1}\mathscr G_{K,1}(v)
  +h_{T_K,2}\mathscr G_{K,2}(v),
  \\
  \mathcal U_{1,K}^{\mathrm{std}}(v)
  &:=
  h_{T_K,1}\mathcal C_{K,1}(v)
  +h_{T_K,2}\mathcal C_{K,2}(v).
\end{align}
\end{subequations}
The first uses the exact transported quantities; the second uses only the expanded covariant-Hessian and tangential-gradient norms on $K$. By \eqref{eq:directional-H1-data-bound},
\begin{align*}
  \mathcal U_{1,K}^{\mathrm{tr}}(v)
  \leq
  \mathcal U_{1,K}^{\mathrm{std}}(v).
\end{align*}

\begin{corollary}[Exactly integrated anisotropic Laplace--Beltrami Galerkin error]
  \label{cor:laplace-beltrami-galerkin}
  We define the mean-zero spaces
  \begin{align*}
    H^1_\star(\Sph_R^2)
    &:={\left\{v\in H^1(\Sph_R^2): \ \int_{\Sph_R^2}v\,\diff\sigma=0\right\}},\\
    V_{h,\star}&:=V_h\cap H^1_\star(\Sph_R^2).
  \end{align*}
  Let $f\in L^2(\Sph_R^2)$ satisfy
  $\int_{\Sph_R^2}f\,\diff\sigma=0$, and let
  $u\in H^1_\star(\Sph_R^2)$ solve
  \begin{align*}
    \int_{\Sph_R^2}\nabla_{\Sph}u\cdot\nabla_{\Sph}v\,\diff\sigma
    =\int_{\Sph_R^2}fv\,\diff\sigma
    \quad\forall v\in H^1_\star(\Sph_R^2).
  \end{align*}
  Let $u_h\in V_{h,\star}$ be the conforming Galerkin approximation with the surface integrals evaluated on the exact sphere. Equivalently, $u_h$ is defined as
  \begin{align}
    \int_{\Sph_R^2}
    \nabla_{\Sph}u_h\cdot\nabla_{\Sph}v_h\,\diff\sigma
    =
    \int_{\Sph_R^2}fv_h\,\diff\sigma
    \quad
    \forall v_h\in V_{h,\star}.
    \label{eq:laplace-beltrami-discrete-problem}
  \end{align}
  Elliptic regularity on the smooth closed sphere gives $u\in H^2(\Sph_R^2)$. Then,
  \begin{align}
    |u-u_h|_{H^1(\Sph_R^2)}
    &\leq
    C_{\mathrm{aff},1}
    \left(
      \sum_{K\in\mathscr K_h}
      \chi_{T_K}\mathcal A_{1,K}(u)^2
    \right)^{1/2}
    \label{eq:laplace-beltrami-anisotropic-energy-error}
    \\
    &\leq
    C_{\mathrm{aff},1}
    \left(
      \sum_{K\in\mathscr K_h}
      \chi_{T_K}\mathcal B_{1,K}(u)^2
    \right)^{1/2}.
    \label{eq:laplace-beltrami-expanded-energy-error}
  \end{align}
  Thus, at the exact anisotropic level, no quasi-uniformity and no uniform bound on $\mu_h$ or $\vartheta_h$ is required; the directional lengths, the asymmetric maximum-angle factor and the radial-distortion factor remain visible element by element. Suppose, in addition, that for some $c_{\mathrm{sr}}\in(0,1]$,
  \begin{align*}
    \min_{K\in\mathscr K_h}t_K^{\mathrm{sr}}
    \geq
    c_{\mathrm{sr}}.
  \end{align*}
  Then,
  \begin{align}
    |u-u_h|_{H^1(\Sph_R^2)}
    &\leq
    C(c_{\mathrm{sr}})
    \left(
      \sum_{K\in\mathscr K_h}
      \chi_{T_K}
      \mathcal U_{1,K}^{\mathrm{tr}}(u)^2
    \right)^{1/2}
    \leq
    C(c_{\mathrm{sr}})
    \left(
      \sum_{K\in\mathscr K_h}
      \chi_{T_K}
      \mathcal U_{1,K}^{\mathrm{std}}(u)^2
    \right)^{1/2}.
    \label{eq:laplace-beltrami-maxangle-directional-error}
  \end{align}
  Thus, the intrinsic maximum-angle condition removes the chordal factor $\mu_{T_K}$ from the displayed directional scale, while the radial factor $\chi_{T_K}$ remains explicit. If, more specifically, $h/R\leq\sqrt3\,c_{\mathrm{sr}}/2$, then
  \begin{align}
    |u-u_h|_{H^1(\Sph_R^2)}
    &\leq
    C(c_{\mathrm{sr}})
    \left(
      \sum_{K\in\mathscr K_h}
      \mathcal U_{1,K}^{\mathrm{tr}}(u)^2
    \right)^{1/2}
    \leq
    C(c_{\mathrm{sr}})
    \left(
      \sum_{K\in\mathscr K_h}
      \mathcal U_{1,K}^{\mathrm{std}}(u)^2
    \right)^{1/2}.
    \label{eq:laplace-beltrami-refined-directional-error}
  \end{align}
  In particular, collapsing the directional scales only at this final stage gives
  \begin{align}
    |u-u_h|_{H^1(\Sph_R^2)}
    \leq
    C(c_{\mathrm{sr}})\left(
      h\|\nabla_{\Sph}^2u\|_{L^2(\Sph_R^2)}
      +\frac{h^2}{R^2}\|\nabla_{\Sph}u\|_{L^2(\Sph_R^2)}
    \right),
    \label{eq:laplace-beltrami-energy-error}
  \end{align}
  where all constants denoted by $C(c_{\mathrm{sr}})$ are independent of $h$, $R$, the number of elements, and all element aspect ratios.
\end{corollary}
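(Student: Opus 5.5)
The plan is to obtain everything from Céa's lemma combined with the global interpolation estimates. First, $V_{h,\star}\subset H^1_\star(\Sph_R^2)$ because $V_h\subset H^1(\Sph_R^2)$, so the problem \eqref{eq:laplace-beltrami-discrete-problem} is a conforming Galerkin approximation with exactly the continuous bilinear form. Galerkin orthogonality $\int\nabla_{\Sph}(u-u_h)\cdot\nabla_{\Sph}v_h\,\diff\sigma=0$ for all $v_h\in V_{h,\star}$ then shows that $u_h$ is the $H^1$-seminorm best approximation of $u$ in $V_{h,\star}$. This uses the fact that the seminorm is a norm on mean-zero functions, by the Poincaré inequality on the sphere. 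Constants lie in $V_h$, since $\sum_i\lambda_i^K=1$, and the seminorm ignores constants. Hence for any $v_h\in V_h$ the shift $v_h-\bar v_h$ by its mean lies in $V_{h,\star}$, and
\begin{align*}
  |u-u_h|_{H^1(\Sph_R^2)}\le\inf_{v_h\in V_h}|u-v_h|_{H^1(\Sph_R^2)}\le|u-I_hu|_{H^1(\Sph_R^2)}.
\end{align*}
Applying Theorem~\ref{thm:global-anisotropic-interpolation} with $v=u\in H^2(\Sph_R^2)$ gives \eqref{eq:laplace-beltrami-anisotropic-energy-error}, and Corollary~\ref{cor:global-expanded-directional} (equivalently $\mathcal A_{1,K}\le\mathcal B_{1,K}$ from Theorem~\ref{thm:standard-directional-form}) gives \eqref{eq:laplace-beltrami-expanded-energy-error}.

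For \eqref{eq:laplace-beltrami-maxangle-directional-error}, I would use Corollary~\ref{cor:intrinsic-partII-geometric-interface}, whose first part needs only the angle bound, to get $\mu_{T_K}\le\gamma_0:=4/(\sqrt3\,c_{\mathrm{sr}})$ for every $K$. Then
\begin{align*}
  \mathcal A_{1,K}(u)\le\max\{\gamma_0,2\}\,\mathcal U^{\mathrm{tr}}_{1,K}(u)=\gamma_0\,\mathcal U^{\mathrm{tr}}_{1,K}(u),
\end{align*}
and inserting this into \eqref{eq:laplace-beltrami-anisotropic-energy-error} gives the first inequality with $C=C_{\mathrm{aff},1}\gamma_0$. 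The second inequality follows from $\mathcal U^{\mathrm{tr}}_{1,K}\le\mathcal U^{\mathrm{std}}_{1,K}$, which is \eqref{eq:directional-H1-data-bound}.

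Under $h/R\le\sqrt3\,c_{\mathrm{sr}}/2$, the proof of Corollary~\ref{cor:global-intrinsic-criterion} gives $\vartheta_{T_K}\le1/2$, hence $\chi_{T_K}\le(1-1/4)^{-1/2}=2/\sqrt3$. Absorbing this factor into the constant yields \eqref{eq:laplace-beltrami-refined-directional-error}.

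For the final rate \eqref{eq:laplace-beltrami-energy-error}, I would bound $\mathcal U^{\mathrm{std}}_{1,K}$ as in the proof of Corollary~\ref{cor:locality-only-orders}. Since $\mathscr H\le\|\nabla_{\Sph}^2u\|_{L^2(K)}$, $\mathscr P\le\|\nabla_{\Sph}u\|_{L^2(K)}$ and $h_{T_K,i}\le h$, we get
\begin{align*}
  \mathcal U^{\mathrm{std}}_{1,K}(u)\le2\sqrt2\,h\bigl(\chi_{T_K}\|\nabla_{\Sph}^2u\|_{L^2(K)}+2\kappa_{T_K}\|\nabla_{\Sph}u\|_{L^2(K)}\bigr).
\end{align*}
Here $\kappa_{T_K}=H_{T_K}\chi_{T_K}^2/(4R^2)\le\gamma_0\chi_0^2h/(4R^2)$, using $H_{T_K}=\mu_{T_K}h_{T_K}\le\gamma_0h$ and $\chi_0=2/\sqrt3$. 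Squaring, summing over $K$, and applying the triangle inequality in $\ell^2(\mathscr K_h)$ as in Corollary~\ref{cor:global-standard-rates} gives the stated bound.

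The only step needing care is the Céa reduction with the mean-zero constraint, namely checking that $I_hu$ may be mean-corrected without changing the seminorm. This works because constants belong to $V_h$. The remaining steps are bookkeeping of constants, all depending only on $c_{\mathrm{sr}}$.
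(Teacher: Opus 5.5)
Your proposal is correct and follows the paper's proof essentially step by step: Céa's lemma with the mean-corrected $I_hu$, then Theorem~\ref{thm:global-anisotropic-interpolation} and Corollary~\ref{cor:global-expanded-directional}, then $\mu_{T_K}\le\gamma_0=4/(\sqrt3\,c_{\mathrm{sr}})$ with $\gamma_0>2$, and finally $\chi_{T_K}\le 2/\sqrt3$ under the condition on $h/R$. The only difference is cosmetic: for \eqref{eq:laplace-beltrami-energy-error} the paper applies Corollary~\ref{cor:global-intrinsic-criterion} to $|u-I_hu|_{H^1(\Sph_R^2)}$, whereas you collapse \eqref{eq:laplace-beltrami-refined-directional-error} directly with the same bookkeeping, which is equally valid.
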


\begin{proof}
  On $H^1_\star(\Sph_R^2)$, the bilinear form
  \begin{align*}
    a(v,w)
    :=\int_{\Sph_R^2}
      \nabla_{\Sph}v\cdot\nabla_{\Sph}w\,\diff\sigma
  \end{align*}
  is an inner product. The Poincar\'e inequality on the connected sphere makes this form coercive on $H^1_\star(\Sph_R^2)$, so the continuous and discrete mean-zero problems are uniquely solvable. Standard elliptic regularity on the smooth closed sphere gives $u\in H^2(\Sph_R^2)$ because $f\in L^2(\Sph_R^2)$. Galerkin orthogonality therefore gives the energy best-approximation property
  \begin{align*}
    |u-u_h|_{H^1(\Sph_R^2)}
    \leq
    \inf_{v_h\in V_{h,\star}}
    |u-v_h|_{H^1(\Sph_R^2)}.
  \end{align*}
  Because constants belong to $V_h$, the function
  \begin{align*}
    v_h:=I_hu-
    \frac{1}{|\Sph_R^2|}
    \int_{\Sph_R^2}I_hu\,\diff\sigma
  \end{align*}
  belongs to $V_{h,\star}$. Because $u-v_h$ and $u-I_hu$ differ only by a constant,
  \begin{align*}
    |u-v_h|_{H^1(\Sph_R^2)}
    =|u-I_hu|_{H^1(\Sph_R^2)}.
  \end{align*}
  Theorem~\ref{thm:global-anisotropic-interpolation} yields \eqref{eq:laplace-beltrami-anisotropic-energy-error}, and Corollary~\ref{cor:global-expanded-directional} yields \eqref{eq:laplace-beltrami-expanded-energy-error}. Under the intrinsic maximum-angle condition, the elementwise comparison in Corollary~\ref{cor:intrinsic-partII-geometric-interface} gives
  \begin{align*}
    \mu_{T_K}
    \leq
    \gamma_0
    :=
    \frac{4}{\sqrt3\,c_{\mathrm{sr}}},
    \quad K\in\mathscr K_h.
  \end{align*}
  Because $c_{\mathrm{sr}}\leq1$, one has $\gamma_0>2$, and therefore
  \begin{align*}
    \mathcal A_{1,K}(u)
    \leq
    \gamma_0\mathcal U_{1,K}^{\mathrm{tr}}(u).
  \end{align*}
  Substitution into \eqref{eq:laplace-beltrami-anisotropic-energy-error}, followed by $\mathcal U_{1,K}^{\mathrm{tr}} \leq\mathcal U_{1,K}^{\mathrm{std}}$, proves \eqref{eq:laplace-beltrami-maxangle-directional-error}. Under the additional condition $h/R\leq\sqrt3\,c_{\mathrm{sr}}/2$, the geometric comparison in Corollary~\ref{cor:global-intrinsic-criterion} gives $\vartheta_h\leq1/2$. Therefore,
  \begin{align*}
    \max_{K\in\mathscr K_h}\chi_{T_K}
    \leq
    \left(1-\frac14\right)^{-1/2}
    =
    \frac2{\sqrt3}.
  \end{align*}
  Absorbing this uniform radial factor into the constant proves \eqref{eq:laplace-beltrami-refined-directional-error}. Finally, Corollary~\ref{cor:global-intrinsic-criterion} gives the isotropic consequence \eqref{eq:laplace-beltrami-energy-error}.
\end{proof}

\section{Concluding remarks}
\label{sec:conclusion}
We have derived anisotropic $\mathbb P^1$ interpolation estimates for the standard radially mapped element on exact geodesic spherical triangles. The $H^1$ bound retains the asymmetric directional scales $\mu_T h_{T,1}$ and $2h_{T,2}$, with radial distortion represented by $\chi_T^{1/2}$. Four explicit families quantify the loss caused by replacing directional lengths with the diameter, establish the sharp order of the maximum-angle factor, show that this factor cannot be transferred from the longer to the shorter directional scale within the two-term bound, and prove the optimality of the exponent $1/2$ in the upper $H^1$ radial-transfer bound with the directional scale fixed.

The radial construction provides compatible edge parametrisations, so the local estimates extend to conforming spherical triangulations. Together with the geometric results of \cite{Ishizaka2026SphericalMaxAngle}, they give global $L^2$- and $H^1$-rates under the intrinsic maximum-angle condition and relative refinement, without a minimum-angle or shape-regularity assumption. The same directional estimates yield the energy-error bound for the conforming mean-zero Laplace--Beltrami method with exact integration.

% ------------------------------------------------------------
\section*{Acknowledgements}
% ------------------------------------------------------------
\paragraph{Use of AI tools.}
The research questions and main ideas of this work were formulated by the author. The author used ChatGPT (OpenAI) during the preparation and revision of this manuscript. This tool was used for language editing and drafting parts of the exposition, assistance with checking mathematical derivations and proofs, computational checks of selected identities and examples, and checking references. The author reviewed and verified all mathematical content and conclusions and takes full responsibility for the final manuscript.

% ------------------------------------------------------------
\section*{Statements and Declarations}
% ------------------------------------------------------------
\paragraph{Funding.}
The author declares that no funds, grants, or other support were received during
the preparation of this manuscript.

\paragraph{Data availability.}
No datasets were generated or analysed during the current study.

\paragraph{Competing interests.}
The author declares no competing interests.

\end{document}